\algnewcommand{\IIf}[1]{\State\algorithmicif\ #1\ \algorithmicthen}
\algnewcommand{\EndIIf}{\unskip\ \algorithmicend\ \algorithmicif}
\newcommand*\Let[2]{\State #1 $\gets$ #2}
\newcommand*\Lett[4]{\State #1 $\gets$ #2, #3 $\gets$ #4}
\algrenewcommand\algorithmicrequire{\textbf{Input:}}
\algrenewcommand\algorithmicensure{\textbf{Output:}}
\tikzstyle{line} = [draw, -latex']
\newtheorem{theorem}{Theorem}[section]
\newtheorem{corollary}[theorem]{Corollary}
\newtheorem{definition}[theorem]{Definition}
\newtheorem{proposition}[theorem]{Proposition}
\newtheorem{lemma}[theorem]{Lemma}
\newcommand{\arrowone}{arrow-1}
\newcommand{\arrowtwo}{arrow-2}
\newcommand{\Arrowone}{Arrow-1}
\newcommand{\Arrowtwo}{Arrow-2}
\newcommand{\fancygraphname}{subset-uniform}
\renewcommand{\a}{a}
\newcommand{\e}{e}
\newcommand{\y}{y}
\newcommand{\Y}{Y}
\newcommand{\x}{x}
\newcommand{\X}{X}
\newcommand{\z}{z}
\newcommand{\I}{I}
\newcommand{\Is}{\mathcal{I}}
\renewcommand{\j}{j}
\renewcommand{\S}{S}
\newcommand{\h}{h}
\renewcommand{\H}{H}
\newcommand{\zP}{P}
\newcommand{\zsetforall}{\ \forall}
\newcommand{\F}{\{0, 1\}}
\newcommand{\FR}{[0, 1]}
\newcommand{\N}{\mathbbm{N}}
\newcommand{\R}{\mathbbm{R}}
\newcommand{\OO}{\mathcal{O}}
\newcommand{\SSet}[2]{%
		\left\{\, #1 \ \middle\vert \ #2 \, \right\}%
	}
\DeclareMathOperator{\conv}{conv}
\DeclareMathOperator{\supp}{supp}
\newcommand{\card}[1]{\lvert #1 \rvert}
\newcommand{\abs}[1]{\lvert #1 \rvert}
\newcommand{\set}[1]{\{#1\}}
\newcommand{\qm}[1]{``#1''}
\newcommand{\ie}{i.e.\ }
\newcommand{\Wlog}{w.l.o.g.\ }
\newcommand{\cf}{cf.\ }
\newcommand{\eg}{e.g.\ }
\newcommand{\mycomment}[1]{}
\newcommand{\st}{\mathrm{s.t.}}
\newlength\mysinglespace
\newlength\objspace
\newlength\conspace
\newlength\cconspace
\newenvironment{constr}[1]{\begin{array}[t]{#1}}{\end{array}} 
\newenvironment{opt*}[3]{\begin{equation*}\begin{array}{rl}#1 & #2 \\[\objspace] \st & \begin{constr}{#3}}{\end{constr}\end{array}\end{equation*}\\[0pt]}
\newenvironment{eqns*}[1]{\begin{equation*}\begin{array}[t]{#1}}{\end{array}\end{equation*}\\[0pt]}
\DeclareMathOperator{\argmax}{argmax}
\newcommand{\GG}{\mathcal{G}}
\newcommand{\EE}{\mathcal{E}}
\newcommand{\FF}{\mathcal{F}}
\newcommand{\LL}{\mathcal{L}}
\newif\ifproof
\definecolor{lightmauve}{rgb}{0.86, 0.82, 1.0}
\DeclareSymbolFont{symbolsC}{U}{pxsyc}{m}{n}
\DeclareMathSymbol{\coloneqq}{\mathrel}{symbolsC}{"42}
\crefname{equation}{}{}
\renewcommand{\cref}{\Cref}
\title{The Bipartite Boolean Quadric Polytope\\with Multiple-Choice Constraints}
\author[1]{Andreas~Bärmann}
\author[1]{Alexander Martin}
\author[2]{Oskar Schneider\vspace{2\baselineskip}}
\affil[1]{
\url{Andreas.Baermann@math.uni-erlangen.de}

\url{Alexander.Martin@math.uni-erlangen.de}

Lehrstuhl für Wirtschaftsmathematik,

Department Mathematik,

Friedrich-Alexander-Universität Erlangen-Nürnberg,

Cauerstraße 11, 91058 Erlangen, Germany
\vspace{\baselineskip}
}
\affil[2]{
\url{Oskar.Schneider@fau.de}

Gruppe Optimization 

Fraunhofer Arbeitsgruppe für Supply-Chain Services SCS,

Fraunhofer Institut für Integrierte Schaltungen IIS,

Nordostpark 93, 90411 Nürnberg, Germany
}
\date{First Draft Online: 24 September 2020}
\begin{document}

\maketitle

\begin{abstract}
We consider the bipartite boolean quadric polytope (BQP)
with multiple-choice constraints and analyse its combinatorial properties.
The well-studied BQP is defined as the convex hull
of all quadric incidence vectors over a bipartite graph.
In this work, we study the case where there is a partition
on one of the two bipartite node sets
such that at most one node per subset of the partition can be chosen.
This polytope arises, for instance,
in pooling problems with fixed proportions
of the inputs at each pool.
We show that it inherits many characteristics from
BQP, among them a wide range of facet classes
and operations which are facet preserving.
Moreover, we characterize various cases
in which the polytope is completely described
via the relaxation-linearization inequalities.
The special structure
induced by the additional multiple-choice constraints
also allows for new facet-preserving symmetries
as well as lifting operations.
Furthermore, it leads to several novel facet classes
as well as extensions of these via lifting.
We additionally give computationally tractable exact separation algorithms,
most of which run in polynomial time.
Finally, we demonstrate the strength of both the inherited
and the new facet classes in computational experiments
on random as well as real-world problem instances.
It turns out that in many cases
we can close the optimality gap almost completely
via cutting planes alone, and, consequently,
solution times can be reduced significantly.

\\\\
\textbf{Keywords:} Boolean Quadric Polytope, Multiple-Choice Constraints,
	Convex-Hull Description, Lifting, Pooling Problem
\\\\
\textbf{Mathematics Subject Classification:}
	90C20 - 
	90C27 - 
	90C26 - 
	90C57 - 
	90C90 
\end{abstract}

\section{Introduction}

The \emph{boolean quadric polytope} of an undirected graph $ G = (V, E) $
is defined as
\begin{equation*}
	QP(G) \coloneqq \conv\SSet{(\x, \z)\in \F^{V\cup E}}{\x_i\x_\j=\z_{i\j} \ \forall \set{i,\j} \in E}
\end{equation*}
and was introduced by Padberg in \cite{padberg1989boolean}.
Due to its fundamental role in the field of polyhedral combinatorics
and its frequent occurrence in practical applications,
it has been extensively studied,
and many facet classes, corresponding separation algorithms,
symmetries and further geometric properties
have been found,
see \eg \cite{padberg1989boolean, sherali1995simultaneous, barahona1986cut, boros1993cut, letchford2014new}.
In \cite{de1990cut}, it has been shown
that~$ QP $ is the image of the \emph{cut polytope}
over an appropriate graph
under an affine transformation called \emph{covariance mapping}.
It has also been investigated under the name \emph{correlation polytope}
in \cite{pitowsky1991correlation}.
For an extensive compilation of the above results, and many more,
we refer the interested reader to \cite{deza1997geometry}.

In the special case where $ G = (\X \cup \Y, E) $ is a bipartite graph,
the polytope is called the \emph{bipartite boolean quadric polytope}:
\begin{equation*}
	BQP(G) \coloneqq \conv\SSet{(\x,\y,\z)\in \F^{\X\cup \Y\cup E}}{\x_i\y_\j=\z_{i\j} \ \forall \set{i,\j} \in E}.
\end{equation*}
and its geometry and further properties have been studied
in \cite{sripratak16bipartite, sripratak2014bipartite, PunnenSripratakKarapetyan2013, PunnenSripratakKarapetyan2015},
among others.
In the present article, we consider the bipartite case
together with an additional \emph{multiple-choice}
(or \emph{set-packing}) structure on the set $\X$.
Let $ \Is $
be a partition of $\X$,
and let $ \X^\Is \coloneqq
	\SSet{\x \in \F^{\X}}{\sum_{i \in \I} \x_i \leq 1
		\zsetforall \I \in \Is} $
be the set of incidence vectors
for which at most one entry per subset in the partition is set to one.
A substructure like this is prevalent in many applications
when competing choices or compatibilities between decision are involved.
This includes 
the knapsack problem
with multiple-choice constraints (\cite{Nauss1978,KellererPferschyPisinger2004}),
with applications in investment planning, among others,
or the clique problem
with multiple-choice constraints (\cite{staircase2018,cycle-free2020}),
which arises, for example, in scheduling problems (\cite{benders2020})
and flow problems with piecewise linear routing costs (\cite{liers2016structural}).
In the case of the boolean quadric polytope,
the multiple-choice structure models given, fixed proportions
between the multiplied quantities.
The underlying feasible set, our polytope of interest,
can then be stated as
\begin{equation*}
	\zP(G, \Is) \coloneqq \conv\SSet{(\x,\y,\z)\in \F^{\X \cup \Y \cup E}}{\x_i\y_\j=\z_{i\j} \ \forall \set{i,\j} \in E, x \in \X^\Is}.
\end{equation*}
Clearly, if all subsets in $\Is$ contain only one node each,
then $ P(G, \Is) = BQP(G) $,
otherwise it is obvious that $ P(G, \Is) \subseteq BQP(G) $,
but the former is not a face of the latter.
We will study structural properties of this polytope,
most notably symmetries, facet classes and separation routines,
some of which are inherited from the original (bipartite)~QP,
while others arise specifically due to the multiple-choice structure.

\paragraph{The boolean quadric polytope in bilinear programming}

The boolean quadric polytope and its variants play a major role
in the solution of bilinear programs. 
State-of-the-art solvers typically rely on linear programming (LP) relaxations
of the non-convex products of variables involved.
Most of them use the \emph{McCormick-relaxation}
(see \cite{mccormick1976computability}),
since it is the best possible linear relaxation
for the product of two continuous variables
over their (finite) bounds.
Obviously, valid inequalities for $ QP $ lead to improved relaxations.
If a bilinear program has further combinatorial substructures,
studying these in combination with $ QP $ allows for even tighter relaxations.
Examples of this are the boolean quadric polytope
over the forest sets of a graph (see \cite{lee2004boolean})
and the cardinality-constrained boolean quadric polytope
(see \cite{mehrotra1997cardinality, faye2005polyhedral, LimaGrossmann2017}).
In \cite{HardinLeeLeung1998},
the authors examine the boolean quadric packing
uncapacitated facility location polytope,
which models uncapacitated facility location with bilinear costs terms.
Further examples are \cite{Castro2015, BolandDeyKalinowskiMolinaroRigterink2015, fampa2018efficient, gupte2013solving},
\cf also \cite{KolodziejCastroGrossmann2013}
and the references therein.
In \emph{separable} (or \emph{disjoint}) bilinear programs,
the variables are partitioned into two subsets
such that there are no products
between any two variables in the same subset.
Moreover, these two subsets of variables
are not coupled via further (linear) constraints.
This special case motivates the study of the bipartite version of $ QP $, \ie $ BQP $,
see \eg \cite{galliseparable, gunluk2012polytope}.
In particular, the authors of \cite{gupteworking},
investigate the polyhedral structure
of separable bilinear programs
where one of the two variable sets obeys a single multiple-choice constraint.
For this case, they can give a complete description
of the convex hull of feasible solutions.
Furthermore, $ BQP $ with multiple-choice constraints is studied
in the context of the bipartite quadratic assignment problem
(see \eg \cite{PunnenWang2016}),
and, closely related, the bilinear assignment problem
(see \eg \cite{CusticSokolPunnenBhattacharya2017}).
Here, multiple-choice constraints can be used to model
the allocation of resources with different properties
such that precisely (or at most) one asset with a certain property
has to be positioned at each location.

\paragraph{Contribution and organization of the paper}
We are interested in bilinear programs where the variables in one of the subsets
are covered by non-overlapping multiple-choice constraints,
which leads us to study the polytope~$P$.
This work is motivated by a real-world pooling problem
(see \cite{misener2009advances, audet2004pooling} for a survey)
arising in the food industry.
There the products have to be manufactured according to given recipes
with fixed proportions of the ingredients.
For each of these ingredients,
there are potentially many different batches of different qualities
on stock to choose from.
The practical solution of pooling problems often relies on strong relaxations
of an underlying bilinear model,
as investigated in \cite{gupte2017relaxations, d2011valid}, for example.
In a similar spirit, we use our theoretical results
on the polyhedral structure of~$P$ to demonstrate their effect 
when solving the mentioned pooling problem with recipes.
We show that many of the original symmetries of $ BQP $
and further general characteristics
are preserved in~$P$, see \cref{sec:gen-prop}.
Moreover, a multitude of new facet-preserving symmetries arises,
which is very beneficial for the design of separation and lifting routines.
As we show in \cref{sec:fac-def-in},
we also inherit large part of the facial structure of $ BQP $.
However, the multiple-choice structure gives rise
to a variety of novel facet classes as well.
For the case of cycle-free dependencies
between the two bipartite subdivisions of the underlying graph,
we can even give a complete convex-hull description for~$P$.
In \cref{sec:sep}, we will devise separation algorithms
for several of the inherited and the new facet classes,
most of them running in polynomial time.
Finally, our computational experiments in \cref{sec:comp-res}
demonstrate that these routines are sufficient
to close the integrality significantly, sometimes completely.
Furthermore, we present a real-world computational study
for a pooling problem with recipes where we show
that the exploitation of the multiple-choice structure
outperforms a bilinear solver based on McCormick-relaxations
by orders of magnitude.
\Cref{sec:conclusion} rounds the paper off with our conclusions.
Finally, we give some of the details on the obtained results
in the appendix.

\paragraph{Notation}

For ease of notation, we denote for a vector $ a \in  \R^{X \cup Y \cup E} $
by $ a_i $ the component corresponding to node $ i \in X $,
similarly $ a_j $ for $ j \in Y $ and $ a_{ij} $ for $ \set{i, j} \in E $,
assuming an arbitrary fixed order on $ X \cup Y \cup E $.
Analogously, $ e_i $, $ e_j $ and $ e_{ij} $
denote the corresponding unit vectors.
For a node $ v \in X \cup Y $,
we define the \emph{neighbourhood} $ N(v) \coloneqq \set{w \in X \cup Y \colon \set{v, w} \in E} $.
Furthermore, we call the graph~$G$ \emph{\fancygraphname}
with respect to the partition~$ \Is $,
if any two nodes which are in the same subset of $I$
also have the same neighbourhood, \ie the same neighbours in $Y$.
For such a graph~$G$, we define
the corresponding \emph{dependency graph} $ \GG = (\Is \cup Y, \EE) $
by merging the nodes in each subset of the partition to a single node,
represented by the subset itself.
Its edge set $ \EE $ contains the merged edges $ \set{I, j} $
for all $ I \in \Is $ and all $ j \in J $ in the joint neighbourhood
of the original nodes in~$I$.
Valid inequalities $ a^T(x^T, y^T, z^T)^T \leq b $ for $ P(G, \Is) $
will be written as $ a^T(x, y, z) \leq b $ for short.
For such a valid inequality, we define the \emph{$Y$-support},
given by $ \supp_Y(a) \coloneqq \set{j \in Y \mid a_j \neq 0 \vee
	(\exists i \in N(j))\, a_{ij} \neq 0} $,
to denote the nodes in~$Y$ involved.
Finally, let $ [n] \coloneqq \set{1, \ldots, n} $.

\section{General properties of $\zP(G, \Is)$}
\label{sec:gen-prop}

We start with some basic properties of $ \zP(G, \Is) $.
In particular, these include symmetries of the polytope under different operations
on the coefficients of valid and facet-defining inequalities.
Some of these symmetries are inherited from $BQP$,
others are induced especially by our multiple-choice structure.

First, observe that it does not matter
whether we define $ \zP(G, \Is) $ as the convex hull
of binary or continuous vectors.
Namely, let $ \bar{X}^\Is \coloneqq \set{x \in [0, 1]^X \mid \sum_{i \in I} \x_i \leq 1 \zsetforall \I \in \Is} $,
and define the set $ S(G, \Is) \coloneqq \set{(x, y, z) \in \FR^{\X \cup \Y \cup E} \mid \x_i \y_\j = \z_{i\j} \zsetforall \set{i, \j} \in E,\, x \in \bar{X}^\Is} $.
Then it is obvious that the extreme points of $ \conv(S(G, \Is)) $
are of the form $ (\bar{x}, \bar{y}, \bar{z}) $,
where $ \bar{x} $ is a vertex of $ \bar{X}^\Is $,
$ \bar{y} $ is a vertex of $ [0, 1]^Y $
and $ \bar{z} = \bar{x} \bar{y} $.
This implies $ P(G, \Is) = \conv(S(G, \Is)) $,
and therefore $ \conv(S(G, \Is)) $ is a polytope.
For general properties of such polyhedra arising from separable bilinear programs
where the two underlying sets of feasible vectors are polytopes,
we refer to \cite{galliseparable}.

In the following, we give two immediate results about $ \zP(G, \Is) $.
\begin{proposition}[NP-hardness]
	Optimizing a linear objective over $ \zP(G, \Is) $ is NP-hard,
	even if each subset in the partition contains only one element.
\end{proposition}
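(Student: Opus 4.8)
The plan is to reduce from a known NP-hard problem over the bipartite boolean quadric polytope (or equivalently over the cut polytope), exploiting the fact that when every subset of the partition~$\Is$ is a singleton we have $P(G,\Is) = BQP(G)$ exactly. So it suffices to show that linear optimization over $BQP(G)$ is NP-hard for bipartite $G$. The cleanest route is via the covariance mapping linking $QP$ to the cut polytope (\cite{de1990cut}): optimizing a linear functional over $QP(G)$ is equivalent to a max-cut computation on a related graph, and max-cut is NP-hard. For the bipartite restriction one can instead appeal directly to the literature on $BQP$ — the hardness of the associated \emph{bipartite unconstrained quadratic programming} problem is established in \cite{PunnenSripratakKarapetyan2013} (see also \cite{sripratak2014bipartite}) — which already states that maximizing $\sum_{i\in X} c_i x_i + \sum_{j\in Y} d_j y_j + \sum_{\{i,j\}\in E} q_{ij} x_i y_j$ over $x\in\F^X$, $y\in\F^Y$ is NP-hard. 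Since this objective is linear in $(x,y,z)$ once we substitute $z_{ij}=x_iy_j$, and since maximizing a linear function over $BQP(G)$ attains its optimum at a vertex, i.e.\ at such an $(x,y,z)$, the two optimization problems coincide.

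\textbf{Key steps, in order.} First, I would recall explicitly that $\Is$ consisting only of singletons gives $P(G,\Is)=BQP(G)$, as already noted in the introduction, so it is enough to prove the claim for $BQP(G)$. Second, I would invoke the known NP-hardness of bipartite unconstrained 0/1 quadratic optimization (equivalently, max-cut restricted via covariance mapping, or directly the result from \cite{PunnenSripratakKarapetyan2013}); a one-line reduction suffices since max-cut on general graphs reduces to the bipartite quadratic problem by a standard construction, or one simply cites it. Third, I would observe the dictionary: any linear objective $a^T(x,y,z)$ over $BQP(G)$ has its maximum attained at an extreme point, which by the definition of $BQP(G)$ has $z=xy$ with $(x,y)\in\F^{X}\times\F^{Y}$; conversely any instance of the bipartite quadratic problem is of this form. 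Hence a polynomial-time algorithm for linear optimization over $BQP(G)$ would solve the NP-hard bipartite quadratic problem, a contradiction unless $\mathrm{P}=\mathrm{NP}$.

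\textbf{Main obstacle.} There is no serious mathematical obstacle here — this is a ``folklore'' hardness result and the proof is essentially a citation plus a remark. The only thing requiring a modicum of care is making sure the reduction stays within the bipartite setting: one must either cite a reference that already proves hardness for \emph{bipartite} $G$ (which \cite{PunnenSripratakKarapetyan2013} does), or give the short argument that max-cut on an arbitrary graph $H=(V,F)$ can be encoded as a bipartite quadratic instance (for instance by the covariance-type transformation, placing one copy of $V$ in $X$, another in $Y$, and choosing edge coefficients so that the cross terms reproduce the cut weights). I would present the citation-based version as the main proof and mention the explicit reduction only briefly, since spelling it out is routine. The ``even if each subset in the partition contains only one element'' clause is then immediate from the first step and needs no separate argument.
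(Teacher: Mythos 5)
Your proposal is correct and matches the paper's proof, which likewise observes that the singleton case is exactly $BQP(G)$ and cites the known NP-hardness result from \cite{sripratak2014bipartite} (Theorem~2.1). The extra remarks you add (optimum attained at a vertex, covariance mapping to max-cut) are fine but not needed beyond the citation.
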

\ifproof
\begin{proof}
	The NP-hardness of optimizing a linear objective over $ BQP(G) $,
	\ie the case with one element per subset,
	follows from \cite[Theorem~2.1]{sripratak2014bipartite}.
\end{proof}
\fi
\begin{proposition}[Dimensionality]
	The polytope $ \zP(G, \Is) $ is full-di\-men\-sional.
	\label{thm:dim}
\end{proposition}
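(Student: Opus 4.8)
The plan is to exhibit $\abs{\X} + \abs{\Y} + \abs{E} + 1$ affinely independent points of $\zP(G, \Is)$, which is exactly the number required for full-dimensionality in the ambient space $\R^{\X \cup \Y \cup E}$. I would anchor the construction at the origin $(0,0,0)$, which is feasible since $x = 0$ satisfies every multiple-choice constraint and makes all products vanish. Relative to this anchor it then suffices to produce $\abs{\X} + \abs{\Y} + \abs{E}$ linearly independent feasible directions, i.e. feasible points whose difference with the origin gives a basis of $\R^{\X \cup \Y \cup E}$.

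For each node $j \in \Y$ I would take the point $e_j$; for each node $i \in \X$ the point $e_i$; and for each edge $\set{i, j} \in E$ the point $e_i + e_j + e_{ij}$. Each of these lies in $\zP(G, \Is)$: in the first case $x = 0$ forces every $z$-variable to $0$; in the second case only a single $x$-coordinate equals $1$, so all multiple-choice constraints hold and $y = 0$ again forces $z = 0$; in the third case only $x_i$ and $y_j$ are nonzero, so the multiple-choice constraints hold, $z_{ij} = x_i y_j = 1$, and every other product equals $0$.

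It then remains to check that $\set{e_j : j \in \Y} \cup \set{e_i : i \in \X} \cup \set{e_i + e_j + e_{ij} : \set{i, j} \in E}$ is linearly independent, which follows from a block argument: in any vanishing linear combination, restricting to the $z$-coordinates shows that the coefficients of the edge-vectors vanish, since each $z_{ij}$ receives a contribution from exactly one of them; then restricting to the $x$- and $y$-coordinates, where the remaining vectors form the standard basis of $\R^{\X \cup \Y}$, forces the rest of the coefficients to vanish as well. I do not anticipate a genuine obstacle here; the only point requiring (minimal) care is confirming that the points activating an $x$-coordinate respect the set-packing constraints, which is immediate since each of them sets at most one $x$-variable to $1$. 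This yields $\dim \zP(G, \Is) = \abs{\X} + \abs{\Y} + \abs{E}$, as claimed.
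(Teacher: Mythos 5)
Your proof is correct and uses exactly the same construction as the paper: the origin together with $e_j$ for $j \in \Y$, $e_i$ for $i \in \X$, and $e_i + e_j + e_{ij}$ for $\set{i,j} \in E$, whose affine independence the paper asserts and you verify in detail via the block argument. No differences in approach.
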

\ifproof
\begin{proof}
	The polytope contains the $0$-vector
	as well as the vectors $ \e_{i} $
	for all $ i \in X$,
	$ \e_{\j} $ for all $ \j \in Y $
	and $ \e_{i} + \e_{\j} + \e_{i\j} $
	for all $ \set{i, \j} \in E $.
	These ($ \abs{\X} + \abs{\Y} + \abs{E} + 1 $)-many points
	are easily seen to be affinely independent.
\end{proof}
\fi
We will now focus on graph operations and symmetries
which preserve validity of an inequality for $ \zP(G, \Is) $
as well as the property to be facet-defining.
Let $ \hat{G} = (\hat{X} \cup \hat{Y}, \hat{E}) $ be a subgraph of $G$,
where $ \hat{X} \subseteq X $, $ \hat{Y} \subseteq Y $
and $ \hat{E} \subseteq \hat{X} \times \hat{Y} $.
Furthermore, let $ \hat{\Is} $ be the restriction
of the partition~$ \Is $
corresponding to $ \hat{X} $.
We define the \emph{extension}
of a valid inequality for $ \zP(\hat{G}, \hat{\Is})$
to a valid inequality for $ \zP(G, \Is) $
by adding a $0$-coefficient for each additional node and edge.
This procedure is called $0$-lifting in the literature.
Similarly, we define the \emph{extension}
of a point in $ \zP(\hat{G}, \hat{\Is}) $
to a point in $ \zP(G, \Is) $
by adding a $0$-entry for each additional node in~$X$,
which also uniquely defines the entries
corresponding to the additional edges.
The \emph{restriction} of a valid inequality for $ \zP(G, \Is) $
or a point in $ \zP(G, \Is) $ to $ \zP(\hat{G}, \hat{\Is}) $
is then defined conversely by discarding all components
for which there is no corresponding node or edge in $ \hat{G} $.
We obtain the following results
for extended and restricted inequalities respectively.
\begin{proposition}[Validity of extension and restriction]
	\label{prop:lifting_valid}\mbox{ }
	\begin{enumerate}
		\item If $ \hat{G} $ is a subgraph of $G$
			and $ \Is $ an extension of $ \hat{\Is} $,
			then the extension of a valid inequality
			for $ \zP(\hat{G}, \hat{\Is}) $
			is a valid inequality for $ \zP(G, \Is) $.
		\item If $ \hat{G} $ is an induced subgraph
			of $ \hat{G} $ and $ \hat{\Is} $ a restriction of $ \Is $,
			then the restriction of a valid inequality
			for $ \zP(G, \Is) $
			is a valid inequality for $ \zP(\hat{G}, \hat{\Is}) $.
	\end{enumerate}
\end{proposition}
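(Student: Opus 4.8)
The plan is to use that, by definition, both $\zP(G,\Is)$ and $\zP(\hat{G},\hat{\Is})$ are convex hulls of finite sets of $0/1$-vectors, so an inequality is valid for one of these polytopes if and only if it holds at every vector of the corresponding finite set (the integral feasible points). It therefore suffices in each part to check the asserted inequality at an arbitrary such integral point, and the idea is to transfer that point to an integral feasible point of the \emph{other} polytope, to which the hypothesis applies. The only real bookkeeping is to confirm that the multiple-choice constraints behave correctly under restriction and extension, and — for part~2 — that no ``new'' edge can carry a nonzero coefficient.

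For part~1, take a valid inequality $\hat{a}^T(x,y,z)\le\hat{b}$ for $\zP(\hat{G},\hat{\Is})$ with $0$-extension $a$ (so that $b=\hat{b}$), and fix an integral point $(x,y,z)\in\zP(G,\Is)$, i.e.\ $x\in\X^\Is$, $y\in\F^Y$, $z=xy$. First I would restrict this point to $\hat{G}$, obtaining $(\hat{x},\hat{y},\hat{z})$, and argue that it lies in $\zP(\hat{G},\hat{\Is})$: the relations $\hat{x}_i\hat{y}_j=\hat{z}_{ij}$ hold for all $\set{i,j}\in\hat{E}\subseteq E$, and every block of $\hat{\Is}$ is contained in a block of $\Is$, so $\sum_{i\in\hat{I}}\hat{x}_i\le\sum_{i\in I}x_i\le 1$, giving $\hat{x}\in\hat{X}^{\hat{\Is}}$. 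Hence $\hat{a}^T(\hat{x},\hat{y},\hat{z})\le\hat{b}$, and since $a$ vanishes on every coordinate outside $\hat{G}$ we get $a^T(x,y,z)=\hat{a}^T(\hat{x},\hat{y},\hat{z})\le\hat{b}=b$. This argument uses nothing about $\hat{G}$ beyond being a subgraph with $\hat{\Is}$ the corresponding restricted partition.

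For part~2, take a valid inequality $a^T(x,y,z)\le b$ for $\zP(G,\Is)$ with restriction $\hat{a}$, and fix an integral point $(\hat{x},\hat{y},\hat{z})\in\zP(\hat{G},\hat{\Is})$. I would extend it to $(x,y,z)$ by assigning the entry $0$ to every node of $X\cup Y$ not present in $\hat{G}$; this forces $z_{ij}=0$ for every edge in $E\setminus\hat{E}$ and hence yields a genuine extension of the point. One then checks $(x,y,z)\in\zP(G,\Is)$: $z=xy$ holds by construction, $y\in\F^Y$, and for each block $I\in\Is$ we have $\sum_{i\in I}x_i=\sum_{i\in I\cap\hat{X}}\hat{x}_i\le 1$ because $I\cap\hat{X}$ is a block of $\hat{\Is}$ or empty. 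Applying validity of $a$ at this point and using that all coordinates outside $\hat{G}$ are zero yields $a^T(x,y,z)=\hat{a}^T(\hat{x},\hat{y},\hat{z})\le b$, as desired.

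The main subtlety will be the final identity $a^T(x,y,z)=\hat{a}^T(\hat{x},\hat{y},\hat{z})$ in part~2, and this is precisely where the induced-subgraph hypothesis enters: it requires that every edge of $E$ with both endpoints in $\hat{X}\cup\hat{Y}$ already belong to $\hat{E}$. If such an edge $\set{i,j}$ were missing from $\hat{E}$, one could take $\hat{x}_i=\hat{y}_j=1$ in $\zP(\hat{G},\hat{\Is})$ while the extension is forced to set $z_{ij}=1$, contributing an uncontrolled term $a_{ij}$ to $a^T(x,y,z)$; the induced-subgraph assumption rules this out. Once this observation is isolated, the remaining steps are the routine verifications sketched above.
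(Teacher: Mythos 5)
Your proposal is correct and follows essentially the same route as the paper's proof: part 1 by restricting integral points of $\zP(G,\Is)$ to points of $\zP(\hat{G},\hat{\Is})$, and part 2 by $0$-extending points in the other direction, with the induced-subgraph hypothesis guaranteeing that all added coordinates are zero. You simply spell out the routine verifications (the multiple-choice constraints under restriction/extension and the role of inducedness) that the paper leaves implicit.
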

\ifproof
\begin{proof}
1. The restriction of each point in $ \zP(G, \Is) $
is a point in $ \zP(\hat{G}, \hat{\Is}) $.
Therefore, the extended inequality is valid for $ \zP(G, \Is) $.
2. For each point in $ \zP(\hat{G}, \hat{\Is}) $,
the extension is a point in $ \zP(G, \Is) $.
As $ \hat{G} $ is an induced subgraph of $G$,
the extension only has additional $0$-entries.
Thus, the restricted inequality
is valid for $ \zP(\hat{G}, \hat{\Is}) $.
\end{proof}
\fi
\begin{proposition}[Facets by extension]
	\label{prop:facet-ext}
	Let $ \hat{G} = (\hat{X} \cup \hat{Y}, \hat{E}) $
	be an induced subgraph of~$G$
	such that $ \hat{X} = X $, and let $ \hat{\Is} = \Is $.
	Then the extension of any facet-defining inequality
	for $ \zP(\hat{G}, \Is) $ is also facet-defining for $ \zP(G, \Is) $.
\end{proposition}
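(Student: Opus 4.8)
The plan is to use the standard "$d$ affinely independent points" criterion for facets. Since $\zP(G,\Is)$ is full-dimensional by \cref{thm:dim}, a valid inequality $a^T(x,y,z)\le b$ is facet-defining precisely when the set of points of $\zP(G,\Is)$ satisfying it with equality has dimension $\dim\zP(G,\Is)-1 = \abs{X}+\abs{Y}+\abs{E}-1$; equivalently, it contains that many affinely independent points, or one point plus that many linearly independent difference vectors. Let $\hat a^T(x,y,z)\le \hat b$ be a facet-defining inequality for $\zP(\hat G,\Is)$, and let $a$ be its extension (zeros on the new edges $E\setminus\hat E$; note $\hat X=X$ so there are no new $X$-nodes, and $\hat Y\subseteq Y$, with new $Y$-nodes $Y\setminus\hat Y$ also getting zero coefficients). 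First I would note that $a^T(x,y,z)\le \hat b$ is valid for $\zP(G,\Is)$ by \cref{prop:lifting_valid}(1), and that $\hat b=b$ in the extension.

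Next I would build the required family of affinely independent tight points in two batches. Because $\hat a$ is facet-defining for $\zP(\hat G,\Is)$, there are $\abs{X}+\abs{\hat Y}+\abs{\hat E}$ affinely independent points of $\zP(\hat G,\Is)$ lying on the face; extending each of them (by appending $0$-entries on $Y\setminus\hat Y$, which forces $0$ on all incident new edges) yields that many affinely independent points of $\zP(G,\Is)$, all tight for $a$ since the appended entries are multiplied by zero coefficients. This accounts for the "$\hat G$-part" of the coordinates. It remains to supply, for each new node $j\in Y\setminus\hat Y$ and each new edge, additional tight points that move in the corresponding new coordinate directions. For a new node $j\in Y\setminus\hat Y$: take any tight extended point $(\bar x,\bar y,\bar z)$ and add $e_j$ to it; this only changes the $y_j$-coordinate (and possibly $z_{ij}$ for $i\in N(j)$ with $\bar x_i=1$, but one can first choose $\bar x$ with a suitable support, or handle the induced $z$-changes as part of the same spanning argument), and the value of $a$ is unchanged because $a_j=0$ and $a_{ij}=0$ for all new edges. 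For a new edge $\set{i,j}\in E\setminus\hat E$: since the coefficient $a_{ij}=0$, flipping $z_{ij}$ from $0$ to $1$ by choosing a point with $x_i=y_j=1$ versus one with $x_i=0$ (keeping multiple-choice feasibility, which is automatic here because we only ever set one $x_i$ per subset) gives two tight points differing exactly in the $z_{ij}$-direction plus controlled lower-dimensional changes. Assembling all these difference vectors, together with the $\abs{X}+\abs{\hat Y}+\abs{\hat E}-1$ independent differences inherited from the sub-face, one checks the full collection spans a space of dimension $\abs{X}+\abs{Y}+\abs{E}-1$.

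The main obstacle is the bookkeeping of the induced changes in the $z$-coordinates when one toggles a new $y_j$ or activates a new edge: one must make sure that the extra points introduced for the new coordinates do not accidentally collapse the rank contributed by the inherited sub-face points, and that each genuinely new unit direction $e_j$ or $e_{ij}$ is reached. The clean way to organize this is to order the new coordinates and process them so that each new point adds exactly one new coordinate to the support relative to the previously listed points (a triangular / echelon argument on the coordinate-support), which makes linear independence transparent without explicit computation. I expect the verification that all new $Y$-nodes can be handled this way to rely on $\hat G$ being an \emph{induced} subgraph (so that the only coefficients on edges incident to $Y\setminus\hat Y$ are the zeros we added) and on the fact that multiple-choice feasibility is never violated by these elementary modifications, since activating a single $x_i$ is always allowed. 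Once the echelon structure is in place, counting gives exactly $\dim\zP(G,\Is)$ affinely independent tight points, so $a^T(x,y,z)\le b$ is facet-defining for $\zP(G,\Is)$.
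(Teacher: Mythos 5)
Your overall strategy coincides with the paper's: the paper's proof is a one-line appeal to ``the same construction of affinely independent points'' as in Padberg's Lifting Theorem, and your plan is exactly that construction --- extend the $\abs{X}+\abs{\hat Y}+\abs{\hat E}$ affinely independent tight points by zeros, then add one further tight point per new node of $Y$ and per new edge. The preparatory steps are fine (validity of the extension via \cref{prop:lifting_valid}, and the observation that inducedness forces every edge of $E\setminus\hat E$ to be incident to a node of $Y\setminus\hat Y$, so all new coefficients are indeed the zeros you appended).

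However, the part you defer as ``bookkeeping'' is where the entire mathematical content sits, and it cannot be completed in the stated generality. For a new edge $\set{i,j}$ you need a tight point of the sub-facet with $x_i=1$, and for a new node $j$ you need the patterns $\supp(\bar x)\cap N(j)$ realized by tight points to be rich enough that the difference vectors $e_j+\sum_{i\in N(j),\,\bar x_i=1}e_{ij}$ span all $1+\abs{N(j)}$ new directions; neither is automatic. Concretely, take $X=I=\set{i_1,i_2}$, $\Is=\set{I}$, $\hat Y=\hat E=\emptyset$, and let $G$ add a single node $j$ adjacent to both $i_1$ and $i_2$. Then $x_{i_1}+x_{i_2}\le 1$ is a facet of $\zP(\hat G,\Is)$, but every tight point of its extension satisfies $y_j=z_{i_1j}+z_{i_2j}$ (exactly one $x_i$ equals $1$ there), so the extended face has codimension $2$ and is not a facet --- consistently with \cref{thm:char_basic}(3), which requires $\bigcap_{i\in I}N(i)=\emptyset$ and is destroyed by the new node $j$; the analogous failure occurs for $x_i\ge 0$ when $i$ is isolated in $\hat G$ but not in $G$. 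In this situation your echelon argument genuinely breaks: the point you would add for the new node $j$ is forced to coincide with one of the points added for the new edges, and no choice of base points repairs the rank deficiency. So a complete proof must exclude (or separately handle) these degenerate facets, e.g.\ by requiring that for each new node $j$ the sub-facet contain tight points realizing the empty set and each singleton of $N(j)$ as $\supp(\bar x)\cap N(j)$. To be fair, this caveat is absent from the paper's own statement and one-line proof as well, so the gap is inherited from the proposition rather than introduced by you --- but your write-up presents the problematic step as routine when it is exactly the step that can fail.
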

\ifproof
\begin{proof}
	The result follows via the same construction
	of affinely independent points
	as in the proof for the original~$ QP $
	(see \cite[Theorem 3 (Lifting Theorem)]{padberg1989boolean}).
\end{proof}
\fi
\cref{prop:facet-ext} also holds
if we add a new subset to the partition which contains a single node.
However, inequality~\cref{rlt:3}, which we introduce in \cref{sec:basic_rlt},
is a counterexample which shows that increasing the graph
through an extension of the partition does generally not preserve facets.

Next, we consider symmetries of $ \zP(G, \Is) $
and their effects on its facial structure.
The following result for permutations
of constraint coefficients is straightforward.
\begin{proposition}[Permutation]
For a \fancygraphname\ graph~$G$, let $ \sigma $ be a permutation
on $ \X \cup \Y \cup E $
which permutes elements within a subset $ \I \in \Is $
or elements in~$Y$ and permutes the related edges in $E$ accordingly.
Then the following two statements are equivalent:
\begin{enumerate}
	\item The inequality $ a^T(x,y,z) \leq b $
		is valid (resp.\ facet inducing) for $ \zP(G,\Is) $.
	\item The inequality $ \sigma(a)^T(x,y,z) \leq b$
		is valid (resp.\ facet inducing) for $ \zP(G,\Is) $,
		where $ \sigma(a) $ denotes the resorted vector according to $ \sigma $.
\end{enumerate}
\label{prop:permu}
\end{proposition}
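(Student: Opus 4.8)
The plan is to exhibit an explicit bijection between the vertex set of $\zP(G,\Is)$ and itself that is induced by $\sigma$, and then use the standard fact that a bijective linear map which maps a polytope onto itself carries valid inequalities to valid inequalities and facets to facets. First I would make precise what it means for $\sigma$ to ``permute the related edges accordingly'': since $G$ is \fancygraphname, any two nodes in a common subset $\I\in\Is$ have the same neighbourhood in $Y$, so permuting two such nodes (or permuting nodes within $Y$) induces a well-defined permutation of $E$, namely $\set{i,\j}\mapsto\set{\sigma(i),\sigma(\j)}$, and hence a permutation (still called $\sigma$) of the coordinate set $\X\cup\Y\cup E$. This $\sigma$ acts linearly on $\R^{\X\cup\Y\cup E}$ by permuting coordinates; denote this linear map by $\Sigma$, so that $\Sigma$ is an orthogonal involution-type bijection (a permutation matrix).

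The key step is to check that $\Sigma$ maps $\zP(G,\Is)$ bijectively onto itself. Because $\Sigma$ is linear and bijective, it suffices to show it maps the generating binary points of $\zP(G,\Is)$ onto the generating binary points. So take a feasible point $(\x,\y,\z)\in\F^{\X\cup\Y\cup E}$ with $\x\in\X^\Is$ and $\z_{i\j}=\x_i\y_\j$ for all $\set{i,\j}\in E$, and let $(\x',\y',\z')\coloneqq\Sigma(\x,\y,\z)$. Since $\sigma$ permutes coordinates within a single subset $\I$ or within $Y$, the vector $\x'$ is just $\x$ with some entries inside one block $\I$ swapped, so $\sum_{i\in\I}\x'_i=\sum_{i\in\I}\x_i\le 1$ for every block, i.e.\ $\x'\in\X^\Is$; similarly $\y'\in\F^Y$. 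The product constraint is preserved because $\z'_{\sigma(i)\sigma(\j)}=\z_{i\j}=\x_i\y_\j=\x'_{\sigma(i)}\y'_{\sigma(\j)}$, using that the neighbourhood relation is respected by $\sigma$ (this is exactly where \fancygraphname-ness is needed: if $\set{i,\j}\in E$ then $\set{\sigma(i),\sigma(\j)}\in E$, so the edge coordinate on the right-hand side actually exists). Hence $(\x',\y',\z')$ is again a feasible generating point, and conversely since $\sigma^{-1}$ has the same structure. Therefore $\Sigma\big(\zP(G,\Is)\big)=\zP(G,\Is)$.

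Finally I would translate this into the statement about inequalities. For a vector $a$, one has $a^T(\x,\y,\z)=\sigma(a)^T\Sigma(\x,\y,\z)$ by definition of the permuted vector, where $\sigma(a)$ is the resorting of $a$ by $\sigma$. Hence $a^T(\x,\y,\z)\le b$ holds for all $(\x,\y,\z)\in\zP(G,\Is)$ if and only if $\sigma(a)^T(\x',\y',\z')\le b$ holds for all $(\x',\y',\z')\in\Sigma(\zP(G,\Is))=\zP(G,\Is)$, which gives the equivalence of validity. For the facet part, note that $\dim\zP(G,\Is)=\abs{\X}+\abs{\Y}+\abs{E}$ is full by \cref{thm:dim} and is of course invariant under $\Sigma$; a set of points in the face $\set{a^T(x,y,z)=b}$ is affinely independent if and only if its image under the bijective linear map $\Sigma$ is affinely independent and lies in the face $\set{\sigma(a)^T(x,y,z)=b}$. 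So the two faces have equal dimension, and one is a facet precisely when the other is. I do not expect a genuine obstacle here; the only point that needs care is the bookkeeping showing that the induced map on edge coordinates is well defined, which is precisely where the \fancygraphname\ hypothesis enters — without it, swapping two nodes in a subset would not in general send edges to edges.
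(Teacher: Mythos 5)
Your proof is correct; the paper itself gives no argument for this proposition (it is dismissed as ``straightforward''), and the symmetry argument you spell out -- the coordinate permutation $\Sigma$ maps the generating binary points of $\zP(G,\Is)$ bijectively onto themselves, hence carries valid inequalities to valid inequalities and affinely independent point sets on a face to affinely independent point sets on the image face -- is exactly the intended reasoning. Your remark that subset-uniformity is what makes the induced edge permutation well defined is the right observation (and is echoed by the paper's comment immediately after the proposition that without this hypothesis one may only permute nodes sharing the same neighbourhood); the analogous well-definedness for permutations within $Y$ is implicitly assumed in the statement and handled the same way.
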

If $G$ is not \fancygraphname, \cref{prop:permu} only holds
when permuting nodes sharing the same neighbourhood
(together with their incident edges).
Furthermore, if~$I$ contains two subsets of the same size
in which the respective nodes share the same neighbourhood,
then the statement of \cref{prop:permu} also holds
for swapping all the nodes between these two subsets.

It is also possible to formulate a variant
of the well-known switching transformation
(see \cite[Theorem 6]{padberg1989boolean}) for $ \zP(G, \Is) $.
In contrast to the original transformation on~$ QP $,
it is, however, only possible to switch on a subset of the $y$-variables here.
\begin{proposition}[Switching]
	Let the inequality $ a^T(\x,\y,\z) \leq b $ be valid (resp.\ facet-inducing)
	for $ \zP(G, \Is) $, let further $ \hat{Y} \subseteq Y $ and define
	\begin{equation*}
	  \begin{split}
		\hat{a}_i &\coloneqq
		\begin{cases}
			a_i + \sum_{\j \in \hat{Y}} a_{i\j} & \text{for }\I \in \Is, i \in \I\\
		\end{cases} \\
			\hat{a}_\j &\coloneqq
		\begin{cases}
			-a_\j & \text{for } \j \in \hat{Y}\\
			a_\j & \text{otherwise }
		\end{cases}\\
	\end{split}
	\quad
	\begin{split}
			\hat{a}_{ij} &\coloneqq
		\begin{cases}
			-a_{ij} & \text{for } \set{i,\j} \in E \text{ with } \j \in \hat{Y}\\
			a_{ij} & \text{otherwise }
		\end{cases}\\
		\hat{b} &\coloneqq b - \sum_{\j \in \hat{Y}} a_\j
	\end{split}
	\end{equation*}
	then $ \hat{a}^T(\x, \y, \z) \leq \hat{b} $
	is a valid (resp.\ facet-inducing) inequality for $ \zP(G, \Is) $ as well.
	\label{lemma:sym}
\end{proposition}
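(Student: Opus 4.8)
The plan is to verify the switching transformation by a direct substitution argument at the level of feasible points, exactly as in the classical proof for $QP$ in \cite[Theorem 6]{padberg1989boolean}, but keeping track of the fact that we only switch the $y$-variables in $\hat Y$ and leave the $x$-variables (and hence the multiple-choice constraints) untouched. Concretely, for a point $(x,y,z)\in P(G,\Is)$ I would define its \emph{switched image} $(\hat x,\hat y,\hat z)$ by $\hat x_i \coloneqq x_i$ for all $i\in X$, $\hat y_\j \coloneqq 1-y_\j$ for $\j\in\hat Y$ and $\hat y_\j \coloneqq y_\j$ otherwise, and $\hat z_{i\j}\coloneqq x_i - z_{i\j}$ for $\set{i,\j}\in E$ with $\j\in\hat Y$ and $\hat z_{i\j}\coloneqq z_{i\j}$ otherwise. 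First I would check that this map sends $P(G,\Is)$ bijectively onto itself: it clearly preserves binarity, it does not touch the $x$-part so $x\in X^\Is$ is untouched, and one checks $\hat x_i\hat y_\j = \hat z_{i\j}$ in both cases ($x_i(1-y_\j)=x_i-x_iy_\j=x_i-z_{i\j}$ when $\j\in\hat Y$, and trivially otherwise); the map is an involution, hence a bijection on the vertex set and thus an affine automorphism of the polytope.

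The second step is the algebraic bookkeeping showing that $a^T(x,y,z)\le b$ is equivalent to $\hat a^T(\hat x,\hat y,\hat z)\le \hat b$ — i.e.\ that the linear functional $\hat a^T(\cdot)-\hat b$ is precisely $a^T(\sigma(\cdot))-b$ under the involution $\sigma$ above. I would compute $\hat a^T(\hat x,\hat y,\hat z)$ by grouping terms. The node terms over $X$ contribute $\sum_i(a_i+\sum_{\j\in\hat Y}a_{i\j})x_i$; the $Y$-terms split into $\sum_{\j\notin\hat Y}a_\j y_\j$ and $\sum_{\j\in\hat Y}(-a_\j)(1-y_\j)$; the edge terms split into $\sum_{\j\notin\hat Y}a_{i\j}z_{i\j}$ and $\sum_{\j\in\hat Y}(-a_{i\j})(x_i-z_{i\j})$. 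Expanding, the $-a_{i\j}x_i$ contributions from the switched edges cancel against the $+a_{i\j}x_i$ contributions hidden in the modified $\hat a_i$, the $-a_\j\cdot 1$ terms get absorbed into $\hat b = b-\sum_{\j\in\hat Y}a_\j$, and what remains is exactly $a^T(x,y,z)$. Hence $\hat a^T(\hat x,\hat y,\hat z)\le\hat b \iff a^T(x,y,z)\le b$ for every point, which gives validity of the switched inequality.

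For the facet claim I would argue as usual: if $a^T(x,y,z)\le b$ is facet-defining, it is tight at $\dim P(G,\Is)$ affinely independent points of $P(G,\Is)$; applying the involution $\sigma$, which is an affine bijection of the polytope onto itself, maps these to $\dim P(G,\Is)$ affinely independent points (affine independence is preserved by affine bijections) all of which satisfy $\hat a^T(x,y,z)=\hat b$ by the equivalence just established. Since $P(G,\Is)$ is full-dimensional by \cref{thm:dim}, this shows $\hat a^T(x,y,z)\le\hat b$ is facet-defining as well; the converse follows because $\sigma$ is an involution.

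The main obstacle is purely the careful sign- and index-bookkeeping in the second step — in particular making sure every $x_i$-coefficient is accounted for once (the subtle point being that switching an edge $\set{i,\j}$ with $\j\in\hat Y$ produces a stray $-a_{i\j}x_i$ term that must be exactly compensated by the $+\sum_{\j\in\hat Y}a_{i\j}$ added into $\hat a_i$), and confirming that the constant correction $\hat b=b-\sum_{\j\in\hat Y}a_\j$ is the right one. Everything else (involution, preservation of the multiple-choice constraints, transfer of affine independence) is routine once the point map $\sigma$ is written down.
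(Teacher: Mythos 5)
Your proposal is correct and follows essentially the same route as the paper: both define the involution $\psi$ fixing $x$, sending $y_\j\mapsto 1-y_\j$ and $z_{i\j}\mapsto x_i-z_{i\j}$ for $\j\in\hat Y$, verify it is an affine bijection of $\zP(G,\Is)$ onto itself under which the switched inequality pulls back to the original, and transfer affinely independent tight points. The only difference is that you spell out the sign-bookkeeping that the paper leaves as ``one can easily check''.
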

\begin{proof}
	Define the one-to-one mapping $ \psi\colon \zP(G, \Is) \to \zP(G, \Is) $
	for which $ v = (x, y, z) $ is mapped onto $ (\hat{x}, \hat{y}, \hat{z}) $
	with $ \hat{x} = x $ as well as
	\begin{equation*}
	\hat{y}_j \coloneqq
	\begin{cases}
	1 - \y_\j  & \text{for } \j \in \hat{Y} \\
	\y_\j & \text{otherwise }\\
	\end{cases} \quad \text{and} \quad
		\hat{\z}_{i\j} \coloneqq
	\begin{cases}
	- \z_{i\j} + \x_i  & \text{for } \set{i, \j} \in E \text{ with } \j \in \hat{Y} \\
	z_{i\j} & \text{otherwise }\\
	\end{cases}.
	\end{equation*}
	One can easily check that $ v_1, \ldots, v_m \in \zP(G, \Is) $
	are affinely independent
	iff $ \psi(v_1), \ldots, \psi(v_m) \in \zP(G, \Is) $ are.
	Thus, $ \psi $ maps facets onto facets.
\end{proof}
Note that the $Y$-support of an inequality is preserved under switching.
Moreover, one can separate in polynomial time
over all switchings of a given inequality
by iteratively checking if the inequality is tightened
by switching on each $ j \in Y $ separately.

The following symmetric operation on $ \zP(G, \Is) $,
which we call \emph{copying},
is novel in the sense that there is no corresponding operation
on either~$ QP $ or~$ BQP $.
It arises specifically due to the multiple-choice constraints.
\begin{definition}[Copying]
	Let $ a^T(\x, \y, \z) \leq b $ be a valid inequality for $ \zP(G, \Is) $.
	For an $ \I \in \Is $ and an $ i \in \I $
	let $ r_i \in \R^{1 + \card{N(i)}} $ be the tuple of the coefficients of $a$
	corresponding to the node~$i$ and the incident edges.
	Further, let $ H^I $ be the set of coefficient tuples
	for all nodes in a subset~$I$.
	For a \fancygraphname\ graph, a \emph{copying} of~$a$
	is obtained by replacing each coefficient tuple~$ r_i $
	for $ I \in \Is $, $ i \in I $, by some tuple in $ h \in H^{I} $.
	If we further restrict the copying operation
	such that in the transformed inequality
	each element in $ H^I \setminus \set{(0, 0, \ldots, 0)} $
	for each $ I \in \Is $ needs to be chosen at least once,
	then we call it a \emph{structure-preserving} copying.
	\label{def:subset_lifting_validness}
\end{definition}
\begin{proposition}[Validity of copying]
	Let $ a^T(x, y, z) \leq b $ be a valid inequality for $ \zP(G, \Is) $,
	then any copied inequality is also valid.
	\label{lemma:subset_lifting_validness_ok}
\end{proposition}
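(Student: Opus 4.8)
The plan is to show validity by checking the copied inequality on every vertex of $\zP(G, \Is)$, which suffices since $\zP(G, \Is)$ is a polytope whose vertices are exactly the integral points $(\bar x, \bar y, \bar z)$ with $\bar x \in X^\Is$, $\bar y \in \{0,1\}^Y$ and $\bar z = \bar x \bar y$. Fix such a vertex $v = (\bar x, \bar y, \bar z)$ and let $\hat a$ be a copied version of $a$, obtained by replacing, for each $I \in \Is$ and each $i \in I$, the coefficient tuple $r_i$ by some tuple $h^{(i)} \in H^I$. Because $G$ is \fancygraphname, all nodes in a fixed subset $I$ share the same neighbourhood in $Y$, so each tuple in $H^I$ has the same "shape" (indexed by $I$'s common neighbourhood), and substituting one tuple for another is well-defined on the level of the inequality.

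The key step is to construct, from $v$, another vertex $v' \in \zP(G, \Is)$ such that $\hat a^T v = a^T v'$; validity of $a$ then gives $\hat a^T v = a^T v' \le b$, as desired. I would build $v'$ as follows: the $\bar y$-part is left unchanged; for a subset $I \in \Is$ with $\bar x_{i_0} = 1$ for its (unique) chosen node $i_0$ — say $r_{i_0}$ was replaced by the tuple $h^{(i_0)}$ originally belonging to some node $i^* \in I$ — set $x'_{i^*} = 1$ and $x'_i = 0$ for the other $i \in I$; for a subset $I$ with $\bar x_i = 0$ for all $i \in I$, set $x'_i = 0$ for all $i \in I$. This is again a valid choice in $X^\Is$ (at most one node per subset). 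Finally set $z' = x' \bar y$, so $v' = (x', \bar y, z')$ is a vertex of $\zP(G, \Is)$. Now compare the two sides: for a subset $I$ with chosen node $i_0$, the contribution of $I$ to $\hat a^T v$ is $h^{(i_0)}$ evaluated against the entries $(\bar x_{i_0}, (\bar z_{i_0 j})_{j}) = (1, (\bar y_j)_j)$, and since $i^*$ is the node whose original tuple equals $h^{(i_0)}$ and since $i^*$ has the same neighbourhood as $i_0$, this equals the contribution of $I$ to $a^T v'$, where node $i^*$ carries value $1$ and $z'_{i^* j} = \bar y_j$. For a subset $I$ contributing nothing in $v$ (all $\bar x_i = 0$, hence all incident $\bar z$ equal $0$), the contribution to $\hat a^T v$ is $0$ regardless of which tuples were substituted, matching the zero contribution to $a^T v'$. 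The $\bar y$-only terms $a_j \bar y_j$ coincide on both sides since $\hat a_j = a_j$ and the $y$-coordinate is untouched. Summing over all subsets and over $Y$ yields $\hat a^T v = a^T v'$.

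The main obstacle is purely bookkeeping: one must be careful that the substituted tuples are compared against the \emph{correct} $z$-entries. The crucial point making this work is the \fancygraphname\ hypothesis: because every node in $I$ has the same neighbourhood, relabelling which node in $I$ "is active" does not change which edges $\{i,j\} \in E$ are available, so $z'_{i^* j} = \bar x_{i^*}\bar y_j = \bar y_j = \bar z_{i_0 j}$ for exactly the relevant indices $j$, and the tuple $h^{(i_0)}$ is paired with the same vector of values in both expressions. Note that the structure-preserving refinement in \cref{def:subset_lifting_validness} is irrelevant for mere validity — it only matters for preserving the facet property — so it can be ignored here. I would also remark that the argument applies verbatim to a continuous point of $S(G, \Is)$, giving an alternative proof that does not invoke the vertex characterisation, but the vertex-based version is cleaner.
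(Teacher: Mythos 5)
Your proof is correct and follows essentially the same idea as the paper's: since $x \in \X^\Is$ forces at most one active node per subset, the copied inequality evaluated at any vertex coincides with the original inequality evaluated at a vertex in which the active node of each subset is relabelled to the node whose tuple was copied, and subset-uniformity guarantees the relabelled point is again feasible. The paper compresses this into a one-line appeal to the permutation symmetry (\cref{prop:permu}), whereas you construct the relabelled vertex $v'$ explicitly, which is arguably cleaner since copying need not be a genuine permutation of the tuples; no gap either way.
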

\begin{proof}
	Due to $ x \in \X^\Is $,
	only one variable per subset in~$ \Is $ can be set to~$1$
	in a feasible solution.
	The correctness then follows from \cref{prop:permu}.
\end{proof}
If the graph is not \fancygraphname,
\cref{lemma:subset_lifting_validness_ok} still holds
if the copying is only performed among nodes
which share the same neighbourhood.

Similar to switching, copying is able to generate an exponential number
of new valid inequalities from some given valid inequality.
Note that one can separate over all of these copyings in polynomial time
by iteratively checking which coefficient tuple makes the inequality tightest
for each element of each subset of the partition separately.
Thus, copying provides a very efficient lifting procedure.
An interesting question is now
if copying maps facets onto facets.
We will see in~\cref{sec:basic_rlt}
that in general the answer is no,
where the inequality \cref{rlt:1} can serve as a counterexample.
However, in \cref{sec:cycle} we will introduce
several classes of facet-defining inequalities
for which structure-preserving copying maps
facets onto other facets.
For the remainder of this paper,
we will always mean structure-preserving copying when we refer to copying.

\section{Facet-defining inequalities}
\label{sec:fac-def-in}

In this section, we will describe several classes
of facet-defining inequalities for the polytope $ \zP(G, \Is) $
and characterize cases
in which they are sufficient to completely describe its convex hull.

\subsection{Basic and RLT inequalities}
\label{sec:basic_rlt}

The following valid inequalities for $ \zP(G, \Is) $
are part of its definition,
which is why we call them the \emph{basic inequalities}:
\begin{alignat}{1}
	0 \leq \y_{\j} &\leq 1 \quad \forall \j \in Y, \label{basic:1}\\
	\x_{i} &\geq 0 \quad \forall \I \in \Is, i \in \I, \label{basic:2}\\
	\sum_{i \in \I} \x_i &\leq 1 \quad \forall \I \in \Is \label{basic:3}.
\intertext{
By applying the well-known \emph{Reformulation-Linearization Technique}
(see \cite{sherali1992new})
to the inequalities \cref{basic:1}--\cref{basic:3},
we obtain the following system of inequalities:}
z_{i\j} &\geq 0 \quad \forall \set{i,\j} \in E \label{rlt:1},\\ 
\x_{i} -z_{i\j} &\geq 0  \quad \forall \set{i,\j} \in E, \label{rlt:2}\\
\y_{\j} - \sum_{i \in N(j) \cap I} z_{i\j} &\geq 0 \quad \forall \I \in \Is, \j \in Y,  \label{rlt:3}\\
 \y_{\j} +\sum_{i \in N(j) \cap I} \left(\x_i - z_{i\j} \right) &\leq  1\quad \forall \I \in \Is, \j \in Y. \label{rlt:4}
\end{alignat}
We will call \cref{rlt:1}--\cref{rlt:4} the \emph{RLT inequalities}.
In some cases, the basic inequalities
are already facet-defining by themselves;
however, they are most of the time dominated by the RLT inequalities,
as the following two results show.
\begin{theorem}[Basic facets]\mbox{ }
	\begin{enumerate}
		\item Iff $ N(j) = \emptyset $
			for some $ \j \in Y $,
			the bounds in~\cref{basic:1}
			define facets of $ \zP(G, \Is) $.
		\item Iff $ N(i) = \emptyset $
			for some $ i \in \X $,
			inequality~\cref{basic:2}
			defines a facet	of $ \zP(G, \Is) $.
		\item Iff $ \bigcap_{i \in I} N(i) = \emptyset $
			for some $ \I \in \Is $,
			inequality~\cref{basic:3}
			defines a facet of $ \zP(G, \Is) $. 
	\end{enumerate}
	\label{thm:char_basic}
\end{theorem}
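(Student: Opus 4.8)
The plan is to prove each of the three biconditionals by establishing the "easy" direction (violation of the neighbourhood condition implies the basic inequality is strictly dominated, hence not a facet) via an explicit valid inequality that is tight on a strict superset of the face, and the "hard" direction (the neighbourhood condition implies the inequality is facet-defining) by exhibiting enough affinely independent feasible points on the corresponding face, using the dimension count $\dim \zP(G,\Is) = \abs{\X}+\abs{\Y}+\abs{E}$ from \cref{thm:dim}. Throughout, I would lean on \cref{prop:facet-ext}: since $\Is$ is unchanged under adding edges, it suffices in the "hard" directions to treat the graph where only the relevant node (or subset) is present with its edges removed, and then $0$-lift; this reduces each case to a small, transparent configuration.

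For part 1, if $N(j)\neq\emptyset$, pick $i\in N(j)$; then $z_{ij}\ge 0$ (inequality~\cref{rlt:1}) together with $\y_j - \sum_{i'\in N(j)\cap I} z_{i'j}\ge 0$ forces $\y_j\ge 0$ to be implied, and in fact $\y_j=0$ implies $z_{ij}=0$, so the face $\{\y_j=0\}$ is contained in the face $\{z_{ij}=0\}$, which is lower-dimensional once we also note these faces are distinct — hence $\y_j\ge 0$ is not a facet; similarly $\y_j\le 1$ is dominated using \cref{rlt:4}. Conversely, if $N(j)=\emptyset$, the variable $\y_j$ is completely decoupled, $\zP(G,\Is)$ is a product with the segment $[0,1]$ in the $\y_j$-coordinate, and $\{\y_j=0\}$, $\{\y_j=1\}$ are facets; concretely I would take the $\abs{\X}+\abs{\Y}+\abs{E}$ affinely independent points from the proof of \cref{thm:dim} that do not touch $\y_j$, observe they all lie on $\{\y_j=0\}$, and for the bound $\y_j\le 1$ translate by $e_j$. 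Part 2 is analogous: if $N(i)\neq\emptyset$, inequality~\cref{rlt:2} gives $\x_i\ge z_{ij}\ge 0$, so $\{\x_i=0\}\subsetneq\{z_{ij}=0\}$ and $\cref{basic:2}$ is dominated; if $N(i)=\emptyset$, the coordinate $\x_i$ is free (subject only to $\sum_{i'\in I}\x_{i'}\le 1$, which the point set of \cref{thm:dim} restricted to $\{\x_i=0\}$ respects), and one exhibits the affinely independent points on $\{\x_i=0\}$ directly.

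Part 3 is the most delicate and I expect it to be the main obstacle. If $\bigcap_{i\in I}N(i)\neq\emptyset$, choose $j$ in this common neighbourhood; summing inequality~\cref{rlt:4} over... more precisely, the single inequality $\y_j+\sum_{i\in I}(\x_i-z_{ij})\le 1$ together with $z_{ij}\ge\,$(nonneg.) and $\y_j\ge\sum_{i\in I}z_{ij}$ should show that on the face $\{\sum_{i\in I}\x_i=1\}$ one must have $\y_j=\sum_{i\in I}z_{ij}$ and $\y_j+\sum_i(\x_i-z_{ij})=1+\y_j-\sum_i z_{ij}\le 1$ forces... I would work out that $\sum_{i\in I}\x_i=1$ combined with the RLT system pins down a further equation (e.g. forcing $\y_j=\sum_{i\in I}z_{ij}$ to hold with the face lying inside $\{\y_j=\sum_{i\in I}z_{ij}\}\cap\{\text{something}\}$), exhibiting a dominating inequality — this requires care to identify the exact implied equation, and is where a short explicit calculation is genuinely needed. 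For the converse, assume $\bigcap_{i\in I}N(i)=\emptyset$; then on the face $\{\sum_{i\in I}\x_i=1\}$ I would build affinely independent points as follows: for each $i\in I$ take $e_i$; then $e_i+e_j+e_{ij}$ for every edge $\{i,j\}\in E$; then $e_{i(I)}+e_j$ for each $j\in Y$, where $i(I)\in I$ is any node (or, when $j$ is in the neighbourhood of some but not all nodes of $I$, a node $i\in I$ with $j\notin N(i)$ — such a node exists precisely because the common intersection is empty, and this is exactly where the hypothesis is used); and finally $e_i+e_j+e_{ij}$-type combinations for edges incident to other subsets, plus $e_j$ for $j\in Y$ outside all these, to reach the full count $\abs{\X}+\abs{\Y}+\abs{E}$. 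The hypothesis $\bigcap_{i\in I}N(i)=\emptyset$ is exactly what guarantees that for every $j\in Y$ we can place a unit of $\X$-mass in $I$ on a node not adjacent to $j$, so that $e_{i}+e_j$ is feasible with $\sum_{i\in I}\x_i=1$ and $z$-coordinates all zero — giving the needed $\abs{Y}$ extra directions transverse to the $\abs{\X}+\abs{E}$ directions coming from single nodes and edges. Verifying affine independence of this explicitly listed family, and checking the count matches $\dim\zP(G,\Is)$, is then routine linear algebra.
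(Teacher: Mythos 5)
Your proposal is correct and takes essentially the same route as the paper, which likewise handles the ``only if'' directions by noting that the basic inequalities are dominated by RLT inequalities (the bounds in \cref{basic:1} by \cref{rlt:1}/\cref{rlt:3} and \cref{rlt:2}/\cref{rlt:4}, inequality \cref{basic:2} by \cref{rlt:1} and \cref{rlt:2}, and inequality \cref{basic:3} by \cref{rlt:3} and \cref{rlt:4}) and simply declares the ``if'' directions easy to see, which your affinely independent point constructions flesh out. The calculation you defer in part~3 is just the observation that adding \cref{rlt:3} and \cref{rlt:4} for some $ j \in \bigcap_{i \in I} N(i) $ (for which $ N(j) \cap I = I $) yields $ \sum_{i \in I} x_i \leq 1 $ as a sum of two valid inequalities that are not multiples of it.
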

\ifproof
\begin{proof}
	It easy to see that the stated conditions
	are sufficient for the corresponding inequalities to be facet-defining.
	Otherwise, the lower bound in \cref{basic:1}
	is dominated by \cref{rlt:1} and \cref{rlt:3},
	the upper bound in \cref{basic:1}
	is dominated by \cref{rlt:2} and \cref{rlt:4},
	\cref{basic:2} is dominated by \cref{rlt:1} and \cref{rlt:2},
	and \cref{basic:3} is dominated by \cref{rlt:3} and \cref{rlt:4}.
\end{proof}
\fi
\begin{theorem}[RLT facets]
	The RLT inequalities \cref{rlt:1}--\cref{rlt:4}
	define facets of the polytope $ \zP(G, \Is) $,
	except for the case when $ N(j) \cap I = \emptyset $
	in \cref{rlt:3} or \cref{rlt:4}.
\end{theorem}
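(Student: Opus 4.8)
The plan is to show for each of the four RLT inequalities \cref{rlt:1}--\cref{rlt:4} that, under the stated nondegeneracy hypothesis, it is facet-defining by exhibiting $\dim \zP(G,\Is) = \abs{X}+\abs{Y}+\abs{E}$ affinely independent feasible points that satisfy the inequality with equality. Throughout I would build points as $(\x,\y,\z)$ with $\z=\x\y$, so that feasibility reduces to choosing $\x\in X^\Is$ and $\y\in\F^Y$; affine independence I would check against the baseline point $0$ (which lies on all four faces), reducing the task to exhibiting $\abs{X}+\abs{Y}+\abs{E}$ linearly independent difference vectors lying on the face. As is standard for $QP$-type polytopes, a convenient bookkeeping device is: the unit vectors $\e_i$ ($i\in X$), $\e_\j$ ($\j\in Y$), and $\e_i+\e_\j+\e_{i\j}$ ($\set{i,\j}\in E$) are affinely independent together with $0$ and span the whole space, so one modifies this canonical list minimally to land in the face of interest.

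First I would treat \cref{rlt:1}, $\z_{i_0\j_0}\ge 0$ for a fixed edge $\set{i_0,\j_0}$. A point satisfies this at equality iff $\x_{i_0}\y_{\j_0}=0$. The canonical spanning list already has this property for all members except $\e_{i_0}+\e_{\j_0}+\e_{i_0\j_0}$; replace that one offending vector by, say, $\e_{i_0}+\e_{\j_0}$ (still feasible, still on the face, and it restores independence since it is the unique vector with support meeting both coordinate $i_0$ and $\j_0$ but not $i_0\j_0$). That yields the required count. For \cref{rlt:2}, $\x_{i_0}-\z_{i_0\j_0}\ge0$, equality means $\x_{i_0}(1-\y_{\j_0})=0$; again only $\e_{i_0}$ from the canonical list violates it, and I would swap it for $\e_{i_0}+\e_{\j_0}+\e_{i_0\j_0}$ (noting that vector is already in the list, so instead I add $\e_{i_0}+\e_{i_0\j_0}$—wait, that is infeasible since $\z\neq\x\y$; the clean fix is to use the point with $\x_{i_0}=1$, $\y_{\j_0}=1$, i.e.\ $\e_{i_0}+\e_{\j_0}+\e_{i_0\j_0}$, and to restore the lost direction by keeping $\e_{\j_0}$ and using the difference $(\e_{i_0}+\e_{\j_0}+\e_{i_0\j_0})-\e_{\j_0}=\e_{i_0}+\e_{i_0\j_0}$ as the independent direction). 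This is exactly the kind of routine linear-algebra juggling that I would carry out but not belabour.

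For \cref{rlt:3} and \cref{rlt:4} with a fixed subset $I\in\Is$ and $\j_0\in Y$, the hypothesis $N(\j_0)\cap I\neq\emptyset$ is used precisely to guarantee the face is not the whole polytope. For \cref{rlt:3}, $\y_{\j_0}\ge\sum_{i\in N(\j_0)\cap I}\z_{i\j_0}$, equality holds iff either $\y_{\j_0}=0$ (then both sides vanish) or $\y_{\j_0}=1$ and some $\x_{i}=1$ with $i\in N(\j_0)\cap I$ (multiple-choice forces at most one, and then the sum equals $1$). So I would take: $0$; all $\e_i$ for $i\in X$; all $\e_\j$ for $\j\neq\j_0$; for $\j_0$ itself the point $\e_{i^*}+\e_{\j_0}+\e_{i^*\j_0}$ for a fixed $i^*\in N(\j_0)\cap I$ in place of $\e_{\j_0}$; and for the edges $\set{i,\j}\in E$ the points $\e_i+\e_\j+\e_{i\j}$ when $\j\neq\j_0$, while for edges at $\j_0$ I distinguish $i\in I$ versus $i\notin I$: if $i\in N(\j_0)\cap I$, $i\neq i^*$, the point $\e_i+\e_{\j_0}+\e_{i\j_0}$ lies on the face; if $i\notin I$, then $\e_i+\e_{\j_0}+\e_{i\j_0}$ has $\y_{\j_0}=1$ but the $I$-sum is $0$, so it does \emph{not} lie on the face—here I instead use $\e_{i^*}+\e_i+\e_{\j_0}+\e_{i^*\j_0}+\e_{i\j_0}$ (legal since $i,i^*$ are in different subsets, as $i\notin I$). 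A dimension count then gives $\abs{X}+\abs{Y}+\abs{E}$ points, and independence follows from the triangular structure of their supports. For \cref{rlt:4}, which by \cref{lemma:sym} (switching on $\j_0$) is the image of \cref{rlt:3}, I would simply invoke the Switching proposition: switching maps the \cref{rlt:3}-face to the \cref{rlt:4}-face and preserves the facet property, so nothing new is needed.

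The main obstacle is the edge-vertex bookkeeping for \cref{rlt:3}: the canonical vector $\e_i+\e_\j+\e_{i\j}$ for an edge $\set{i,\j_0}$ with $i\notin I$ does not lie on the face, and one must replace it by a point that (i) is feasible under the multiple-choice constraints, (ii) lies on the face, and (iii) together with all the other chosen points remains affinely independent. Ensuring (iii) for the whole collection simultaneously—rather than for each substitution in isolation—is where care is needed; I expect to organize the argument by ordering the coordinates so that each chosen point's support introduces a new coordinate not seen before, making the incidence matrix triangular and hence nonsingular. Everything else (the \cref{rlt:1}, \cref{rlt:2} cases, and the reduction of \cref{rlt:4} to \cref{rlt:3}) is either immediate or a direct appeal to \cref{lemma:sym}.
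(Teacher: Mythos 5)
Your overall strategy coincides with the paper's: exhibit $\abs{X}+\abs{Y}+\abs{E}$ affinely independent points on each face, starting from the canonical spanning family $0$, $\e_i$, $\e_j$, $\e_i+\e_j+\e_{ij}$, and dispatch \cref{rlt:4} as a switching of \cref{rlt:3} via \cref{lemma:sym}. Your treatment of \cref{rlt:3} is correct and in fact more careful than the paper's own (the paper's stated list contains $\e_i+\e_{j_1}+\e_{ij_1}$ for $i\notin I_1$, which does not lie on that face; your substitute $\e_{i^*}+\e_i+\e_{j_0}+\e_{i^*j_0}+\e_{ij_0}$ is the right repair, and your count and triangularity argument check out). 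However, the first half of your proposal contains two concrete errors.

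First, for \cref{rlt:1} the proposed replacement point $\e_{i_0}+\e_{j_0}$ does not belong to $\zP(G,\Is)$: it has $x_{i_0}=y_{j_0}=1$ but $z_{i_0 j_0}=0$, violating $x_{i_0}y_{j_0}=z_{i_0 j_0}$ (equivalently, it violates \cref{rlt:4}). Moreover, no replacement is needed or even possible: the canonical family together with $0$ already consists of $\abs{X}+\abs{Y}+\abs{E}+1$ points, so deleting the single offending point $\e_{i_0}+\e_{j_0}+\e_{i_0 j_0}$ leaves exactly the required $\abs{X}+\abs{Y}+\abs{E}$ points, whereas any feasible on-face replacement would give one point too many to be affinely independent on a proper face. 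Second, for \cref{rlt:2} the assertion that only $\e_{i_0}$ from the canonical list is off the face is false: equality in \cref{rlt:2} means $x_{i_0}(1-y_{j_0})=0$, and every point $\e_{i_0}+\e_j+\e_{i_0 j}$ with $j\in N(i_0)\setminus\set{j_0}$ has $x_{i_0}=1$ and $y_{j_0}=0$, hence also lies off the face; your patch repairs only the single point $\e_{i_0}$ and therefore leaves an invalid family. The clean route, which the paper takes, is to observe that \cref{rlt:2} is the switching of \cref{rlt:1} on $\hat{Y}=\set{j_0}$ and invoke \cref{lemma:sym}, exactly as you do for \cref{rlt:4}; with that change, and with deletion rather than replacement in the \cref{rlt:1} case, your argument goes through.
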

\ifproof
\begin{proof}
	The $0$-vector as well as $ \e_{i} $ for $ i \in X $,
	$ \e_{\j} $ for $ \j \in Y $ and $ \e_{i} + \e_{\j} + \e_{i\j} $
	for $ \set{i, \j} \in E \setminus \set{i_1,\j_1} $
	satisfy $ z_{i_1\j_1} = 0 $ for each $ \set{i_1, \j_1} \in E $.
	They are affinely independent,
	which shows that \cref{rlt:1} defines a facet.
	Inequality \cref{rlt:2} induces a facet
	since it is a switching of \cref{rlt:1}
	for $ \hat{Y} \coloneqq \set{j_1} $.
	The $0$-vector as well as $ \e_{i} $ for $ i \in X $,
	$ \e_{\j} $ for $ \j \in Y \setminus \set{j_1} $
	and $ \e_{i} + \e_{\j} + \e_{i\j} $ for $ \set{i, \j} \in E $
	satisfy $ \y_{\j_1} - \sum_{i \in N(j_1) \cap \I_1} z_{i\j_1} = 0 $
	for each $ \I_1 \in \Is $ and $ \j_1 \in Y $.
	Their affine independence shows that \cref{rlt:3}
	defines a facet as well.
	Again, \cref{rlt:4} is a switching of \cref{rlt:3}
	for $ \hat{Y} \coloneqq \set{j_1} $ and thus also induces a facet.
\end{proof}
\fi

\subsubsection{Complete description on cycle-free dependency graphs}
\label{sec:compl-descpr}
In the following, we describe sufficient conditions
for the graph~$G$ and the partition~$ \Is $
which guarantee that the basic and RLT inequalities
completely describe $ P(G, \Is) $.

An initial, simple conclusion can be drawn directly
from Theorem~1 in \cite{gupteworking}:
in case there is only one subset in the partition~$ \Is $,
\ie $ \Is = \set{X} $, and~$G$ is a complete bipartite graph,
the RLT inequalities are indeed sufficient to describe $ P(G, \Is) $.
We will now generalize this finding in two ways:
our main result will be that the basic and RLT inequalities are sufficient
for subset-uniform graphs~$G$ with a cycle-free dependency graph,
independent from the number of subsets in the partition.
Then we will see that the basic and RLT inequalities together
fully describe $ P(G, \Is) $ for arbitrary bipartite graphs~$G$
if, as in \cite{gupteworking}, $ \Is = \set{X} $ holds.

Our proofs are based on Zuckerberg's method for deriving convex-hull descriptions
for combinatorial problems
(see \cite{zuckerberg2016geometric, bienstock2004subset}).
We briefly summarize it here,
based on the simplified formulation given in \cite{gupte2020extended}.
Consider a $0$/$1$-polytope $ R \coloneqq \conv(\FF) $
with vertex set $ \FF \subseteq \F^n $
and a second polytope $ H \subseteq \R^n $
(typically given via an inequality description)
for which we would like to show $ R = H $.
We can prove this by verifying both $ \FF \subseteq H $ and $ H \subseteq R $.
To show the latter inclusion,
we first need to represent $ \FF $
as a finite set-theoretic expression
consisting of unions, intersections and complements
of the sets
\begin{equation*}
	A_i \coloneqq \SSet{a \in \F^n}{a_i = 1}, \quad i = 1, \ldots, n.
	\label{equ:zuck-a}
\end{equation*}
Let $ F(A_1, \ldots, A_n) $ be such a representation.
Further, define $ U \coloneqq [0, 1) $,
let $ \LL $ be the set of all unions
of finitely many half-open subintervals of $U$,
and let $ \mu $ be the Lebesgue measure (restricted to $ \LL $), that is
\begin{alignat*}{1}
	\LL \coloneqq \SSet{[a_1, b_1) \cup \ldots \cup [a_k,b_k)}
{0 \leq a_1 < b_1 < a_2 < b_2 < \ldots < a_k < b_k \leq 1, k \in \N},\\[0.25\baselineskip]
	\mu(S) \coloneqq (b_1 - a_1) + \ldots + (b_k - a_k)
\text{ for any } S = [a_1, b_1) \cup \ldots \cup [a_k, b_k) \in \LL.
\end{alignat*}
For each point $ \h \in \H $, we then have to find
a collection of sets $ S_1, \ldots, S_n \in \LL $
such that $ \mu(\S_i) = \h_i $ for all $ i \in [n] $
and $ F(\S_1,\ldots, \S_n) = U $
(where the complement is taken in~$U$ instead of $ \F^n $).
With these prerequisites, the following theorem, a slight reformulation
of \cite[Theorem~4]{gupte2020extended},
gives the desired result.
\begin{theorem}[Zuckerberg's convex hull characterization]
	Let $ \FF \subseteq \F^n $, $ h \in \FR^n $,
	and let~$F$ be a finite set theoretic expression
	with $ F(A_1, \ldots, A_n) = \FF $.
	Then $ h \in \conv(\FF) $
	if and only if there are sets $ S_1, \ldots, S_n \in \LL $
	such that $ \mu(S_i) = h_i $ for all $ i \in [n] $,
	and $ F(S_1, \ldots, S_n) = U $.
	\label{thm:gupte}
\end{theorem}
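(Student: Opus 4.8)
The plan is to prove both directions directly from the definitions, the engine being a \emph{transfer lemma} that identifies the Boolean algebra on $\F^n$ generated by $A_1,\dots,A_n$ with the Boolean algebra on $U=[0,1)$ generated by $S_1,\dots,S_n$. For $t\in U$ call $v(t)\in\F^n$ its \emph{pattern}, i.e.\ $v(t)_i=1$ if $t\in S_i$ and $v(t)_i=0$ otherwise. First I would record that $\LL$ is closed under finite unions, finite intersections and complementation relative to $U$ (only the complement is non-obvious, and it is immediate from the canonical representation of an element of $\LL$), so that $F(S_1,\dots,S_n)$ is a well-defined element of $\LL$ whenever $S_1,\dots,S_n\in\LL$. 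Then, by structural induction on the finite set-theoretic expression $F$ — base case $F=A_i$, inductive steps for union and intersection, and for complement using $t\notin S_i\iff v(t)_i=0\iff v(t)\notin A_i$ — one obtains the key equivalence
\[
  t \in F(S_1,\dots,S_n) \iff v(t) \in F(A_1,\dots,A_n) = \FF, \qquad \text{for all } t\in U .
\]
I expect this induction, together with keeping the two ambient universes $U$ and $\F^n$ in symmetric roles so that complementation matches up, to be the only point genuinely requiring care; the rest is bookkeeping.

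For the ``only if'' direction, assume $h\in\conv(\FF)$ and write $h=\sum_{k=1}^m\lambda_k v^{(k)}$ with $v^{(k)}\in\FF$, $\lambda_k\ge 0$, $\sum_k\lambda_k=1$. I would partition $U$ into the consecutive half-open intervals $I_k\coloneqq\bigl[\sum_{l<k}\lambda_l,\ \sum_{l\le k}\lambda_l\bigr)$, so $\mu(I_k)=\lambda_k$, and set $S_i\coloneqq\bigcup_{k:\,v^{(k)}_i=1} I_k\in\LL$. Then $\mu(S_i)=\sum_{k:\,v^{(k)}_i=1}\lambda_k=h_i$ for all $i$. Moreover, for $t\in I_k$ the pattern is $v(t)=v^{(k)}\in\FF$, so the transfer lemma gives $t\in F(S_1,\dots,S_n)$; as the $I_k$ cover $U$, we conclude $F(S_1,\dots,S_n)=U$, as required.

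For the ``if'' direction, suppose $S_1,\dots,S_n\in\LL$ with $\mu(S_i)=h_i$ and $F(S_1,\dots,S_n)=U$. I would collect all interval endpoints occurring in the canonical representations of $S_1,\dots,S_n$ together with $0$ and $1$, sort them as $0=t_0<t_1<\dots<t_M=1$, and call the intervals $[t_{l-1},t_l)$ the \emph{atoms}; on each atom the pattern $v(t)$ is a constant vector $v^{(l)}\in\F^n$ because all the defining endpoints lie among the $t_l$. Since every point of $[t_{l-1},t_l)$ lies in $U=F(S_1,\dots,S_n)$, the transfer lemma yields $v^{(l)}\in\FF$ for each $l$. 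Finally $h_i=\mu(S_i)=\sum_{l:\,v^{(l)}_i=1}(t_l-t_{l-1})$ for every $i$, i.e.\ $h=\sum_{l=1}^M(t_l-t_{l-1})\,v^{(l)}$, which exhibits $h$ as a convex combination of points of $\FF$ (the weights are nonnegative and sum to $\mu(U)=1$); hence $h\in\conv(\FF)$.

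Since the statement is, up to the cosmetic choice of $U$, $\LL$ and $\mu$, exactly \cite[Theorem~4]{gupte2020extended}, an alternative would be simply to invoke that reference; but the self-contained argument above is short enough that I would prefer to include it, perhaps pointing out afterwards that the measure $\mu$ plays no special role beyond additivity, so any atomless probability space on $[0,1)$ with $\LL$ replaced by a generating algebra of finite type would work equally well.
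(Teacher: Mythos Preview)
Your proof is correct. The paper, however, does not give a self-contained argument at all: its entire proof reads ``Combine Theorem~4 with the arguments in Remark~2, both from \cite{gupte2020extended}.'' You anticipated this option yourself in your last paragraph. So the difference is simply that the paper defers to the literature, whereas you reprove the result from scratch via the transfer lemma $t\in F(S_1,\dots,S_n)\iff v(t)\in\FF$ and the atom decomposition of $U$. Your route has the advantage of making the paper self-contained on this point and of exposing exactly why the half-open interval algebra $\LL$ suffices (finitely many endpoints yield finitely many atoms with constant pattern); the paper's route keeps the exposition short and puts the emphasis where it wants it, namely on the application in \cref{zuckerberg:p} and \cref{thm:compl_subsetuniform}. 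Either is fine; if you retain your argument, the closing remark about atomless probability spaces is a nice touch but inessential here.
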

\begin{proof}
	Combine Theorem~4 with the arguments in Remark~2,
	both from \cite{gupte2020extended}.
\end{proof}
A detailed discussion of this proof technique
can be found in the mentioned literature.

It is obvious that every binary set $ \FF $
can be represented by some formula~$F$
by encoding each point~$ v \in \FF $ separately,
namely via the set-theoretic subexpressions $ F_v(A_1, \ldots, A_n) \coloneqq
	\bigcap_{i \in [n]:\, v_i = 1} A_i \cap
		\bigcap_{i \in [n]:\, v_i = 0} \bar{A_i} = \set{v} $.
Forming the union of these subexpressions
yields a representation~$ F(A_1, \ldots, A_n) $ of $ \FF $
in disjunctive normal form.
However, from this form it is usually very hard
to deduce a construction rule for the sets $ S_1, \ldots, S_n $
fulfilling all requirements.
A~compact description of~$F$ is much more indicative in this respect,
such as the one we give for $ P(G, \Is) $ in the following.
\begin{lemma}[Set characterization for $P(G, \Is)$]
	Let $ \FF \coloneqq P(G, \Is) \cap \F^{X \cup Y \cup E} $
	and $ h \in \FR^{X \cup Y \cup E} $.
	Then $ \h \in \conv(\FF) $
	if and only if there are sets $ \S_{i} \in \LL $ for $ i \in X $,
	$ \S_{\j} \in \LL $ for $ \j \in Y $
	and $ \S_{i\j} \in \LL $ for $ \set{i, j} \in E $
	satisfying all of the following conditions:
	\begin{enumerate}[label=(\roman*)]
		\item $ \mu(\S_i) = \h_i $ for all $ i \in X $,
		\item $ \mu(\S_\j) = \h_\j $ for all $ \j \in Y $,
		\item $ \mu(\S_{i\j}) = \h_{i\j} $ for all $ \set{i, \j} \in E $,
		\item $ \S_i \cap \S_\j = \S_{i\j} $ for all $ \set{i, \j} \in E $, \label{enum:set_char4}
		\item $ \S_{i_1} \cap \S_{i_2} = \emptyset $
			for all distinct $ i_1, i_2 \in \I $ with $ \I \in \Is $.
	\end{enumerate}
	\label{zuckerberg:p}
\end{lemma}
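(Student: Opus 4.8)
The plan is to apply Zuckerberg's characterization (\cref{thm:gupte}) with $n = |X \cup Y \cup E|$, so the task reduces to producing a suitable compact set-theoretic formula $F$ with $F(A_1,\ldots,A_n) = \FF$, after which conditions (i)--(v) are exactly the translation of the requirements $\mu(S_i) = h_i$ and $F(S_1,\ldots,S_n) = U$. First I would write down the formula. Writing $A_i$, $A_j$, $A_{ij}$ for the sets indexed by the corresponding coordinates, the bilinear constraint $x_i y_j = z_{ij}$ is encoded by $A_{ij} = A_i \cap A_j$ (equivalently, the symmetric-difference condition $(A_i \cap A_j) \triangle A_{ij} = \emptyset$), and the multiple-choice constraint $\sum_{i \in I} x_i \le 1$ is encoded by pairwise disjointness $A_{i_1} \cap A_{i_2} = \emptyset$ for distinct $i_1, i_2 \in I$. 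Hence I would set
\begin{equation*}
	F \coloneqq \Bigl(\bigcap_{\set{i,j} \in E} \overline{(A_i \cap A_j)\triangle A_{ij}}\Bigr) \cap \Bigl(\bigcap_{I \in \Is}\ \bigcap_{\substack{i_1, i_2 \in I\\ i_1 \neq i_2}} \overline{A_{i_1} \cap A_{i_2}}\Bigr),
\end{equation*}
where $\triangle$ is shorthand for the obvious set-theoretic expression using $\cup$, $\cap$ and complements. The first thing to verify is that $F(A_1,\ldots,A_n) = \FF$ when the ambient set is $\F^n$: a vector $v \in \F^n$ lies in $F$ iff for every edge $\set{i,j}$ we have $v_i v_j = v_{ij}$ and for every subset $I$ at most one $v_i$ with $i \in I$ equals $1$, which is precisely membership in $\FF = P(G,\Is) \cap \F^{X\cup Y\cup E}$.

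Next I would translate \cref{thm:gupte}. A point $h$ lies in $\conv(\FF)$ iff there exist $S_i, S_j, S_{ij} \in \LL$ with $\mu(S_i) = h_i$, $\mu(S_j) = h_j$, $\mu(S_{ij}) = h_{ij}$ and $F(S_1,\ldots,S_n) = U$, where now all complements are taken in $U = [0,1)$. The condition $F(S_\bullet) = U$ unwinds to: for every edge $\set{i,j}$, $(S_i \cap S_j)\triangle S_{ij} = \emptyset$, i.e. $S_i \cap S_j = S_{ij}$ (this is condition \ref{enum:set_char4}); and for every $I \in \Is$ and distinct $i_1,i_2 \in I$, $S_{i_1}\cap S_{i_2} = \emptyset$ (condition (v)). Indeed, an intersection of complements of sets in $\LL$ equals all of $U$ iff each of those sets is empty, since $\overline{B} = U$ iff $B = \emptyset$. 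So the "$F(S_\bullet) = U$" clause is equivalent to the conjunction of \ref{enum:set_char4} and (v), while the measure conditions are (i)--(iii). This establishes the claimed equivalence verbatim.

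The one genuine point requiring care — and the place I expect the main (minor) obstacle — is the bookkeeping around the measure $\mu$ and the class $\LL$: one must check that the sets $S_{ij} = S_i \cap S_j$ produced really lie in $\LL$ (finite unions of half-open intervals are closed under intersection, so this is fine) and, more delicately, that the logical step "$\bigcap_k \overline{B_k} = U \iff$ each $B_k$ has measure zero" is replaced, correctly, by "$\iff$ each $B_k = \emptyset$" — these coincide here because every $B_k$ is itself a finite union of half-open intervals, hence has positive measure unless it is literally empty. I would note this explicitly to justify that \ref{enum:set_char4} and (v) are stated as exact set identities rather than up-to-measure-zero statements. With that remark in place, the proof is just the substitution: invoke \cref{thm:gupte} with the formula $F$ above, observe $F(A_1,\ldots,A_n) = \FF$, and read off that the existence of $S_\bullet \in \LL$ satisfying (i)--(v) is literally the conclusion of that theorem. $\hfill\qed$
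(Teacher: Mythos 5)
Your proposal is correct and follows essentially the same route as the paper: both invoke Zuckerberg's characterization (\cref{thm:gupte}) with the same compact formula $F$ (your complement-of-symmetric-difference encoding of $A_i \cap A_j \Leftrightarrow A_{ij}$ and your intersection of complements for the multiple-choice part are De Morgan-equivalent to the paper's expression), and then read off conditions (i)--(v) by translating $\mu(S_\bullet) = h_\bullet$ and $F(S_\bullet) = U$. Your explicit remark that $\bigcap_k \overline{B_k} = U$ forces each $B_k = \emptyset$ as an exact set identity is a nice clarification the paper leaves implicit.
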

\begin{proof}
	Let $ A_i $ for $ i \in X $, $ A_j $ for $ j \in J $
	and $ A_{ij} $ for $ \set{i, j} \in E $
	be defined analogously to~\cref{equ:zuck-a}.
	The set $ \FF $ can then be written as
	\begin{equation*}
	 	F(A_1, \ldots, A_n) \coloneqq
	 		\bigcap_{\set{i, \j} \in E} \underbrace{
	 			(A_i \cap A_\j \Leftrightarrow  A_{i\j})}_{(a)\, x_i y_i = z_{ij}}
	 			\cap \bigcap_{\I \in \Is} \underbrace{\overline{
	 				\bigcup_{\substack{i_1, i_2 \in \I:	i_1 \neq i_2}}
	 					\underbrace{A_{i_1} \cap A_{i_2}}
	 						_{x_{i_1} + x_{i_2} \leq 1}}}
	 							_{(b)\, \sum_{i \in I} x_{i} \leq 1}.
	\end{equation*}
	Each intersection in the above expression
	represents a defining constraint
	for the vertices of $ \zP(G, \Is) $.
	Subexpressions~(a) ensure that all resulting vectors
	are valid for $ BQP(G) $
	while subexpressions~(b) require them
	to fulfil the multiple-choice constraints.
	Conditions (iv)	and (v) together, stemming from (a) and~(b) respectively,
	are now equivalent
	to $ F(\S_1, \ldots, \S_n) = U $.
	The rest follows from \cref{thm:gupte}.
\end{proof}

We readily see that \cref{zuckerberg:p}
also yields a certificate for $ h \in BQP(G) $
(or $ h \in QP(G) $ for a non-bipartite graph~$G$)
if we omit condition~(v).
We only need to define~$F$
without the intersection with subexpressions~(b) in the proof.

We are now ready to prove our main convex-hull result.
\begin{theorem}[Complete description for cycle-free dependency graphs]
	Let $H$ be the polytope defined by the basic
	and RLT inequalities \cref{basic:1,basic:2,basic:3,rlt:1,rlt:2,rlt:3,rlt:4}.
	If $G$ is a \fancygraphname\ graph
	and the dependency graph $ \GG $ is cycle-free,
	we have $ H = \zP(G, \Is) $.
\label{thm:compl_subsetuniform}
\end{theorem}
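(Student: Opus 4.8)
The plan is to invoke Zuckerberg's convex-hull characterization in the form of \cref{zuckerberg:p}: we must show that every point $h \in H$ (i.e.\ every $h \in \FR^{X \cup Y \cup E}$ satisfying the basic and RLT inequalities) admits sets $S_i, S_j, S_{ij} \in \LL$ with $\mu(S_i) = h_i$, $\mu(S_j) = h_j$, $\mu(S_{ij}) = h_{ij}$ and the intersection conditions \ref{enum:set_char4} and (v). The inclusion $\FF \subseteq H$ is immediate since the basic and RLT inequalities are valid. Since $G$ is \fancygraphname, all nodes in a subset $I \in \Is$ share a neighbourhood, so it suffices to work on the dependency graph $\GG = (\Is \cup Y, \EE)$, which by assumption is a forest; we may treat each connected component separately (the corresponding blocks of variables are independent, and $\LL$-subsets built on disjoint sub-intervals of $U$ can be recombined), so \Wlog $\GG$ is a tree.

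First I would root the tree $\GG$ at an arbitrary node and process it top-down, building the sets $S_i$ (for $i \in X$) and $S_j$ (for $j \in Y$) as half-open subintervals or finite unions thereof, carving up $U = [0,1)$. The key structural point is condition (v): within a subset $I \in \Is$, the sets $\{S_i : i \in I\}$ must be pairwise disjoint. This is exactly where the multiple-choice constraint $\sum_{i \in I} h_i \le 1$ (inequality \cref{basic:3}) enters: it guarantees there is enough room in $U$ to place disjoint intervals of total length $\sum_{i\in I} h_i \le 1$. So at a node $I$ of the tree, given an interval (or union) already allocated "to be compatible with the parent", I would partition it into consecutive pieces of lengths $h_i$ for $i \in I$ (plus possibly a leftover piece of length $1 - \sum_{i\in I}h_i$ on which no $x_i$ is active), thereby realizing the node-level variables; then for each child $j \in Y$ of $I$ in the tree, I allocate $S_j$ as a sub-union chosen so that $S_i \cap S_j = S_{ij}$ for the unique relevant $i \in N(j) \cap I$. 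The RLT inequalities \cref{rlt:1}--\cref{rlt:4} are precisely the compatibility conditions that make such a choice feasible at each step: $z_{ij} \ge 0$ and $x_i - z_{ij} \ge 0$ say $0 \le h_{ij} \le h_i$ so $S_{ij}$ can be taken inside $S_i$; $y_j - \sum_{i \in N(j)\cap I} z_{ij} \ge 0$ says the pieces $S_{ij}$ for different $i$ (which are automatically disjoint since the $S_i$ are) fit inside a set of measure $h_j$; and \cref{rlt:4} controls the complementary side, ensuring $S_j$ can simultaneously be placed to have the right overlap with the "$x_i = 0$" region. Descending the tree, each $S_j$ then becomes the parent interval passed down to the next $\Is$-node, and since $\GG$ is acyclic there are no conflicting constraints to reconcile — every $j \in Y$ has at most one already-fixed neighbour when it is reached.

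The main obstacle, and the part requiring genuine care, is the bookkeeping at a $Y$-node $j$ that has several children $I$ in the tree: the set $S_j$ must be sliced to simultaneously satisfy $S_i \cap S_j = S_{ij}$ for its parent-side neighbour \emph{and} pass down usable intervals to all child $\Is$-nodes, each of which will further need room for its own multiple-choice partition. I would handle this by maintaining the invariant that the interval handed to any node is a half-open union whose total length equals the corresponding $y$- or (at the root) is all of $U$, and by always placing the "active" sub-pieces ($S_{ij}$ and the within-subset slices) at one end and the "slack" at the other, so that the recursion composes cleanly. One must also check that all sets produced indeed lie in $\LL$ (finitely many half-open intervals) — this is automatic since the construction only ever takes finite unions, intersections and relative complements of intervals with the rational-combination endpoints dictated by the $h$-values. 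Once the construction is described and the invariants verified, conditions (i)--(v) of \cref{zuckerberg:p} hold by design, and \cref{thm:gupte} yields $h \in \conv(\FF) = \zP(G,\Is)$, completing the proof that $H = \zP(G,\Is)$.
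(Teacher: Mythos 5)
Your proposal follows essentially the same route as the paper's proof: both verify the hypotheses of the set-theoretic characterization in \cref{zuckerberg:p} by rooting each tree of the cycle-free dependency graph and constructing the interval sets top-down, with the multiple-choice inequality \cref{basic:3} guaranteeing disjointness within each $I \in \Is$ and the RLT inequalities certifying that each allocation step is feasible (this is exactly the paper's \textsc{Match} subroutine inside a depth-first traversal). The one point to phrase more carefully is that at an $\Is$-node one cannot literally partition the parent set $S_j$ into consecutive pieces of length $h_i$ (since $\sum_{i \in I} h_i$ may exceed $h_j$); instead each $S_i$ must be split into a piece of measure $h_{ij}$ inside $S_j$ and a piece of measure $h_i - h_{ij}$ inside $\overline{S_j}$, which is what your subsequent remarks about \cref{rlt:3} and \cref{rlt:4} already indicate.
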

\begin{proof}
	First, note that $ \zP(G, \Is) \subseteq H $ holds,
	as \cref{basic:1,basic:2,basic:3,rlt:1,rlt:2,rlt:3,rlt:4}
	are valid constraints for $ \zP(G, \Is) $.
	To prove $ H \subseteq \zP(G, \Is) $,
	we give an explicit construction
	of the sets $ S_i $, $ S_j $ and $ S_{ij} $
	for each point $ h \in H $
	as required in \cref{zuckerberg:p}.
	As $ \GG $ is cycle-free,
	we can represent it as a finite list of trees.
	This allows us to define the required sets
	via the algorithm described in the following,
	which recursively traverses each tree.
	It consists of the three routines
	\textsc{Define-Sets}, \textsc{Traverse-Tree} and \textsc{Match}.
	The auxiliary routine \textsc{Match} is given in \cref{fig:def_match}.
	Its inputs are a set $ S \in \LL $
	together with a list of diameters $ (w_1, \ldots, w_k) $ for some $ k \geq 1 $.
	The output is then a list of pairwise disjoint subsets $ (S_1, \ldots, S_k) $ of~$S$
	with $ \mu(S_i) = w_i $ for all $ i \in [k] $.
	This is possible precisely
	if the stated requirements for the diameters~$ w_i $ are fulfilled.
	\begin{figure}[htb]
		\hfill
		\begin{subfigure}[b]{0.58\textwidth}
			\begin{algorithmic}[1]
				\Require{$ S \in \LL,
					(w_1, \ldots, w_k), w_i \in [0, 1), i \in [k] $
					with $ w_1 + \ldots + w_k \leq \mu(S) $}
				\Ensure{$ (S_1, \ldots, S_k) $
					with $ S_i \subseteq S $ for $ i \in [k] $,
					$ S_i \cap S_j = \emptyset $ for $ i, j \in [k] $
					with $ i \neq j $}
				\Function{Match}{$ S, (w_1, \ldots, w_k) $}
					\State $ t_0 \leftarrow 0 $
					\For{$ r = 1, \ldots, k $}
						\State $ t_r \leftarrow
							\min\set{t \in S \mid \mu(S \cap [t_{r - 1}, t]) = w_r} $
						\State $ S_r \leftarrow S \cap [t_{r - 1}, t_r) $
					\EndFor
					\State{\textbf{return} $ (S_1, \ldots, S_k) $}
				\EndFunction
			\end{algorithmic}
		\end{subfigure}
		\hfill
		\begin{subfigure}[b]{0.35\textwidth}
			\begin{tikzpicture}[xscale=6,yscale=1]
			\draw[thick] (0,0) -- (0.5,0);
			\draw[thick] (0,.1) -- (0,.-.1);
			\draw (.1,.05) -- (.1,-.05);
			\draw (.2,.05) -- (.2,-.05);
			\draw (.3,.05) -- (.3,-.05);
			\draw (.4,.05) -- (.4,-.05);
			\draw[thick] (.5,.1) -- (.5,.-.1);
			\node at (0,-.4) {$0$};
			\node at (.5,-.4) {$1$};
			\draw[fill=red!30] (0,0.2) rectangle ++(0.1,0.4);
			\draw[fill=red!30] (0.15,0.2) rectangle ++(0.15,0.4);
			\draw[fill=red!30] (0.4,0.2) rectangle ++(0.1,0.4);
			\draw[fill=blue!30] (0,0.2+.5*1) rectangle ++(0.05,0.4);
			\draw[fill=green!30] (0.05,0.2+.5*2) rectangle ++(0.05,0.4);
			\draw[fill=green!30] (0.15,0.2+.5*2) rectangle ++(0.1,0.4);
			\draw[fill=yellow!30] (0.25,0.2+.5*3) rectangle ++(0.05,0.4);
			\draw[fill=yellow!30] (0.4,0.2+.5*3) rectangle ++(0.05,0.4);
			\node at (-.1,.4+.5*0) {$\S$};
			\node at (-.1,.4+.5*1) {$\S_1$};
			\node at (-.1,.4+.5*2) {$\S_2$};
			\node at (-.1,.4+.5*3) {$\S_3$};
			\end{tikzpicture}
			\hfill\null
		\end{subfigure}
		\caption{Subroutine \textsc{Match} (left)
			and exemplary output for defining three subsets of some set~$S$ (right)}
		\label{fig:def_match}
	\end{figure}
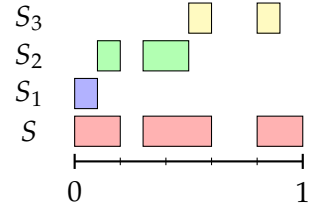

	For the remaining two routines, which are shown in \cref{fig:zuckerberg_in_action},
	we consider the graph~$G$, its dependency graph~$ \GG $
	and the (arbitrary) point $ h \in H $ whose membership in $ \zP(G, \Is) $
	shall be verified as global variables, \ie they are known everywhere.
	The same holds for the sets required for \cref{zuckerberg:p},
	which become known globally once defined via the symbol $ \coloneqq $.
	In contrast, we define auxiliary sets,
	which are local to a routine,
	via the symbol $ \leftarrow $.
	\begin{figure}[p]
		\null\vfill\null
		\begin{algorithmic}[1]
			\Function{Define-Subsets}{}
			\For{each tree in $ \GG $}
			\State Pick some $ \j \in Y $ as the root node \Wlog
			\State $ \S_j \coloneqq [0, \h_j) $.
			\For{$ c \in N(j) $}
			\State \textsc{Traverse-Tree}($j$, $c$)
			\EndFor
			\EndFor
			\EndFunction	
			\Function{Traverse-Tree}{$p$, $c$}
			\If{$ c \in \Is $}
			\State Let $ (i_1, \ldots, i_k) $
			be any fixed order of the nodes in $c$
			\State $ (\S_{i_1 p}, \ldots, \S_{i_k p}) \coloneqq
			\textsc{Match}(S_p, (h_{i_1 p}, \ldots, h_{i_k p})) $
			\Comment{M1, sets for the edges}
			\State $ (\hat{\S}_{i_1}, \ldots, \hat{\S}_{i_k}) \leftarrow
			\textsc{Match}(\overline{\S_p},
			(h_{i_1} - h_{i_1 p}, \ldots,
			h_{i_k} - h_{i_k p})) $
			\Comment{M2}
			\State $ (\S_{i_1}, \ldots, \S_{i_k}) \coloneqq
			(\hat{\S}_{i_1} \cup \S_{i_1 p}, \ldots,
			\hat{\S}_{i_k} \cup \S_{i_k p}) $
			\Comment{sets for the nodes}
			\Else \Comment{$ c \in Y $}
			\For{$ i \in p $}
			\State $ \S_{i c} \coloneqq
			\textsc{Match}(\S_i, (h_{i c })) $
			\Comment{M3, sets for the edges}
			\EndFor
			\State $ \hat{\S}_c \leftarrow
			\textsc{Match}(\overline{\cup_{i \in p}\S_{i}},
			(h_{c} - \sum_{i \in p}h_{i c})) $
			\Comment{M4}
			\State $ \S_c \coloneqq
			  \bigcup_{i \in p}  \S_{i c}  \cup \hat{\S}_c$
			\Comment{set for the node $c$}
			\EndIf
			\For{$ r \in N(c) $}
			\State \textsc{Traverse-Tree}($c$, $r$)
			\EndFor
			\EndFunction
		\end{algorithmic}
		\null\vfill\null
		\centering
		\begin{tikzpicture}[xscale=20,yscale=1]
		\draw[thick] (0,0) -- (0.5,0);
		\draw[thick] (0,.1) -- (0,.-.1);
		\draw (.1,.05) -- (.1,-.05);
		\draw (.2,.05) -- (.2,-.05);
		\draw (.3,.05) -- (.3,-.05);
		\draw (.4,.05) -- (.4,-.05);
		\draw[thick] (.5,.1) -- (.5,.-.1);
		\node at (0,-.4) {$0$};
		\node at (.5,-.4) {$1$};
		\draw[fill=red!30] (0,0.2) rectangle ++(0.3,0.4);
		\draw[fill=blue!30] (0,0.2+.5*1) rectangle ++(0.1,0.4);
		\draw[fill=green!30] (0.1,0.2+.5*2) rectangle ++(0.1,0.4);
		\draw[fill=yellow!30] (0,0.2+.5*3) rectangle ++(0.1,0.4);
		\draw[fill=yellow!30] (0.3,0.2+.5*3) rectangle ++(0.05,0.4);
		\draw[fill=orange!30] (0.1,0.2+.5*4) rectangle ++(0.1,0.4);
		\draw[fill=orange!30] (0.35,0.2+.5*4) rectangle ++(0.05,0.4);
		\draw[fill=black!30] (0,0.2+.5*5) rectangle ++(0.1,0.4);
		\draw[fill=black!30] (0.3,0.2+.5*5) rectangle ++(0.03,0.4);
		\draw[fill=purple!30] (0.1,0.2+.5*6) rectangle ++(0.1,0.4);
		\draw[fill=purple!30] (0.35,0.2+.5*6) rectangle ++(0.03,0.4);
		\draw[fill=cyan!30] (0,0.2+.5*7) rectangle ++(0.33,0.4);
		\draw[fill=cyan!30] (0.35,0.2+.5*7) rectangle ++(0.03,0.4);
		\draw[fill=cyan!30] (0.4,0.2+.5*7) rectangle ++(0.05,0.4);
		\node at (-.1,.4+.5*0) {$S_{j_1}$};
		\node at (-.1,.4+.5*1) {$S_{i_1j_1}$};
		\node at (-.1,.4+.5*2) {$S_{i_2j_1}$};
		\node at (-.1,.4+.5*3) {$S_{i_1}$};
		\node at (-.1,.4+.5*4) {$S_{i_2}$};
		\node at (-.1,.4+.5*5) {$S_{i_1j_2}$};
		\node at (-.1,.4+.5*6) {$S_{i_2j_2}$};
		\node at (-.1,.4+.5*7) {$S_{j_2}$};
		\end{tikzpicture}  
		\caption{Routines \textsc{Define-Sets} and \textsc{Traverse-Tree} (top)
			and exemplary construction of the intervals
			for the complete bipartite graph~$G$
			with $ X = \set{i_1, i_2} $ and $ Y = \set{j_1, j_2} $
			as well as the partition $ \Is = {X} $ (bottom).
			The dependency graph~$ \GG $ of~$G$ has node set~$ \Is $
			and edges $ \EE = \set{\set{\Is, j_1}, \set{\Is, j_2}} $.
			The algorithm starts by defining the set $ S_{j_1} $,
			constructs the intermediate sets for edges
			as well as the nodes in $ \Is $
			and terminates after the definition of $ S_{j_2} $.
			The so-defined sets satisfy the conditions in \cref{zuckerberg:p},
			especially~(iv) and~(v),
			where in this case the former reads $ S_{i_1} \cap S_{i_2} = \emptyset $
			while the latter is $ S_{i_q} \cap S_{j_p} = S_{i_q j_p} $
			for all $ q = 1, 2 $ and $ p = 1, 2 $.}
		\label{fig:zuckerberg_in_action}
		\null\vfill\null
	\end{figure}
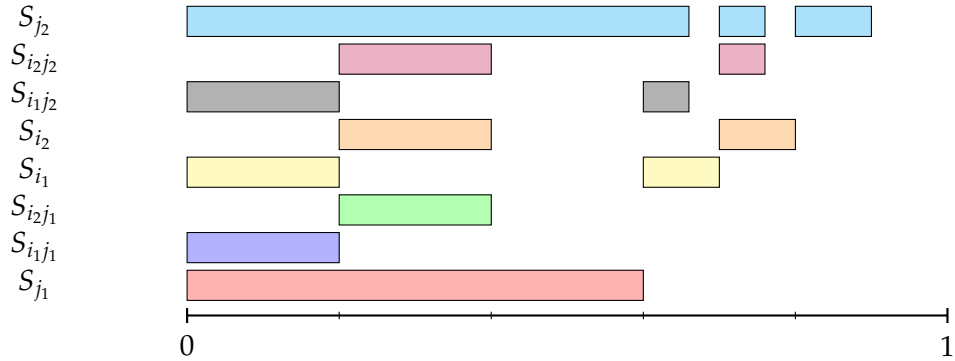
	
	Function \textsc{Define-Sets} is the main routine of the algorithm.
	It iterates over each tree in $ \GG $
	and defines the corresponding sets for the nodes and edges independently,
	which is possible due to \cref{zuckerberg:p}.
	For the current tree, the first step
	is to choose an arbitrary root node $ j \in Y $
	and to define the corresponding
	set as $ S_j \coloneqq [0, h_j) $.
	Then we perform depth-first search by calling \textsc{Traverse-Tree}
	for all edges $ \set{j, c} $ of~$ \GG $ with $ c \in N(j) $.
	This subroutine has two input parameters:
	a parent node~$p$ and a child node~$c$ (relative to the chosen root node).
	It assumes that the sets for~$p$ have already been defined
	and constructs the sets for the node~$c$
	as well as for the edges between~$c$ and~$p$.
	To define these sets, we need several calls to the function \textsc{Match}
	and need to argue each time why its input requirements are satisfied
	for $ h \in H $.
	Clearly, inequalities~\cref{rlt:1} ensure that the inputs are non-negative.
	The function starts by differentiating two cases,
	namely $ c \in \Is $ or $ c \in Y $.
	
	In the first case, it fixes some order of the nodes in~$c$
	and performs Match~M1, which directly defines the sets
	for the edges between the nodes in~$c$ and~$p$.
	Note that inequalities \cref{rlt:3}, which hold for~$h$,
	ensure that the requirements for~M1 are satisfied.
	From \cref{zuckerberg:p}, condition~(iv),
	we know that for any node $ i \in c $ connected to node~$p$
	the corresponding sets have to fulfil $ S_i \cap S_p = S_{i p} $.
	As $ S_{i p} $ has already been defined via~M1,
	part of the set $ S_i $ is thereby already known.
	The other part is given by the auxiliary set $ \hat{S}_{i_l} $,
	which is determined by Match~M2	over the complement of $ S_p $,
	where inequality~\cref{rlt:4} ensures its requirements.
	This way, all sets for the nodes in~$c$
	as well as their connecting edges to~$p$ have been defined
	and we can go further down the tree.
		
	For the second case, $ c \in Y $,
	\textsc{Traverse-Tree} performs Match~M3 in a loop to define the sets
	for the edges between $p$ and $c$,
	by which the set $ S_c $ is already partly determined,
	similar as in the first if-branch.
	The remaining part of set $ S_c $ is finally defined by Match~M4
	within the complement of $ \cup_{i \in p}  \S_{i c} $.
	Matches~M3 and~M4 are possible due to \cref{rlt:3} and \cref{rlt:4}.
	Again, all required sets corresponding to $p$, $c$
	and the edges in between them have been determined
	and we can continue recursively with~$c$ and its children.
	
	Whenever we come across an isolated node $ j \in J $,
	\cref{basic:1} ensures $ S_j \subseteq [0, 1) $.
	For isolated nodes $ i \in I $ for some $ I \in \Is $,
	the \fancygraphname ity of the graph
	implies that all other nodes in~$I$ are isolated as well.
	Thus, the sets corresponding to the nodes in~$I$
	can simply be placed next to each other without overlap,
	and inequalities~\cref{basic:2} and~\cref{basic:3} together
	ensure $ \sum_{i \in \I} \mu(S_i) \leq 1 $.
	
	By construction, all the defined sets
	satisfy the requirements of \cref{zuckerberg:p},
	which finishes the proof.
\end{proof}
The above proof also yields an alternative way
to show \cite[Proposition~8]{padberg1989boolean},
which states that the RLT inequalities
are sufficient to completely describe $ QP $ on cycle-free graphs.
Furthermore, the following corollary gives another case
where the RLT inequalities suffice to completely describe $ \zP(G, \Is) $.
\begin{corollary}[Complete description on graphs with one subset]
	Let $G$ be an arbitrary bipartite graph, and let $ I = \set{X} $,
	then the polytope $H$ as defined in \cref{thm:compl_subsetuniform}
	fulfils $ H = \zP(G, \Is) $.
	\label{cor:compl_onesubset} 
\end{corollary}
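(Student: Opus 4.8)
The plan is to invoke Zuckerberg's characterization in the form of \cref{zuckerberg:p}, but now the construction of the required sets will be considerably more direct than in \cref{thm:compl_subsetuniform}, with no recursion involved. The inclusion $\zP(G, \Is) \subseteq H$ is immediate, since all of \cref{basic:1,basic:2,basic:3,rlt:1,rlt:2,rlt:3,rlt:4} are valid for $\zP(G, \Is)$. For the converse, I would fix an arbitrary $h \in H$ and first note that, because $\Is = \set{X}$, condition~(v) of \cref{zuckerberg:p} collapses to the single requirement that the sets $S_i$ for $i \in X$ be pairwise disjoint, while \cref{rlt:3} and \cref{rlt:4} now read $h_j \geq \sum_{i \in N(j)} h_{ij}$ and $h_j + \sum_{i \in N(j)}(h_i - h_{ij}) \leq 1$ for every $j \in Y$.

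The construction would then proceed in two stages. First, fix any order $i_1, \ldots, i_n$ of the nodes of $X$ and set $S_{i_\ell} \coloneqq \bigl[\, h_{i_1} + \ldots + h_{i_{\ell - 1}},\ h_{i_1} + \ldots + h_{i_\ell} \bigr)$; these sets are pairwise disjoint, satisfy $\mu(S_{i_\ell}) = h_{i_\ell}$, and their union lies in $[0, 1)$ because $\sum_{i \in X} h_i \leq 1$ by \cref{basic:3}. Second, for each $j \in Y$ -- independently of the other nodes in $Y$ -- I would, for every $i \in N(j)$, pick a subset $S_{ij} \subseteq S_i$ with $\mu(S_{ij}) = h_{ij}$ (\eg via $\textsc{Match}(S_i, (h_{ij}))$), which is admissible because $0 \leq h_{ij} \leq h_i$ by \cref{rlt:1} and \cref{rlt:2}; since the $S_i$ are disjoint, so are these $S_{ij}$. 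Then I would add an auxiliary set $\hat{S}_j \subseteq [0, 1) \setminus \bigcup_{i \in N(j)} S_i$ with $\mu(\hat{S}_j) = h_j - \sum_{i \in N(j)} h_{ij}$ and put $S_j \coloneqq \bigcup_{i \in N(j)} S_{ij} \cup \hat{S}_j$. This is feasible because the measure of $\hat{S}_j$ is non-negative by \cref{rlt:3}, while the complement $[0, 1) \setminus \bigcup_{i \in N(j)} S_i$ has measure $1 - \sum_{i \in N(j)} h_i$, which is at least $h_j - \sum_{i \in N(j)} h_{ij}$ precisely by \cref{rlt:4}; for an isolated node $j \in Y$ one has $S_j = \hat{S}_j$ and \cref{basic:1} suffices.

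It then remains to verify conditions~(i)--(v) of \cref{zuckerberg:p}, which I expect to be routine: (i) and~(iii) hold by construction; (ii) follows since $S_j$ is a disjoint union of total measure $\sum_{i \in N(j)} h_{ij} + \bigl(h_j - \sum_{i \in N(j)} h_{ij}\bigr) = h_j$; (v) is the disjointness arranged in the first stage; and for~(iv), given $\set{i, j} \in E$ one gets $S_i \cap S_j = S_{ij}$ because $S_{i'j} \subseteq S_{i'}$ is disjoint from $S_i$ for $i' \in N(j) \setminus \set{i}$, $S_{ij} \subseteq S_i$, and $\hat{S}_j$ is disjoint from $S_i$. The conceptual point that makes this work -- and the reason cycles in $G$ cause no trouble here, unlike in \cref{thm:compl_subsetuniform} -- is that with a single subset the sets $S_i$ for $i \in X$ can be frozen as one fixed sub-partition of $[0, 1)$, against which every $S_j$ is then built separately, so no tree traversal is needed. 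The only delicate point is checking that the \textsc{Match} calls are legitimate, i.e.\ that there is enough measure both inside $\bigcup_{i \in N(j)} S_i$ for the edge sets and in its complement for $\hat{S}_j$ -- which is exactly what the pair \cref{rlt:3}, \cref{rlt:4} guarantees.
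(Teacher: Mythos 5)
Your proposal is correct and follows essentially the same construction that the paper sketches for this corollary: lay out the sets $S_i$ for $i \in X$ as adjacent disjoint intervals starting at $0$ (feasible by \cref{basic:2,basic:3}), match each $h_{ij}$ into $S_i$ using \cref{rlt:1,rlt:2}, and place the surplus $h_j - \sum_{i \in N(j)} h_{ij}$ into the complement of $\bigcup_{i \in N(j)} S_i$ using \cref{rlt:3,rlt:4}. You simply spell out the verification of the conditions of \cref{zuckerberg:p} that the paper leaves implicit; no further comment is needed.
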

The proof of \cref{cor:compl_onesubset}
is similar to that of \cref{thm:compl_subsetuniform}.
The sets for~$X$ are chosen adjacent to each other, starting from~$0$.
Then for each $ \set{i, j} \in E $, $ h_{ij} $
is matched onto the set~$ S_i $.
Finally, for $ j \in Y $ the surplus $ h_j - \sum_{\set{i, j} \in E} h_{ij} $
is matched onto $ \overline{\cup_{\set{i, j} \in E} S_i} $.

\subsection{Lifted facets from BQP facets}

In this section, we describe classes of facets of $ \zP(G, \Is) $
which are inherited from $ BQP $.
By applying the switching operation from \cref{prop:lifting_valid}
and the copy operation from \cref{lemma:subset_lifting_validness_ok},
we will be able to lift them,
which enables us to produce large classes
of new facets as well.

\subsubsection{Cycle inequalities}
\label{sec:cycle}

A well-known class of facets for $ QP $
are the cycle inequalities,
which were introduced in \cite{padberg1989boolean}.
The $0$-lifting of a \emph{basic cycle inequality} to $ P(G, \Is) $
can be stated as:
\begin{equation}
	-z_{i_1 j_1} + z_{i_1 j_m} + \sum_{p = 2}^m
		\left(-\y_{j_p} - x_{i_p} + z_{i_p j_{p - 1}} + z_{i_p j_p}\right) \leq 0,
\label{eq:cycle_general_class_short}
\end{equation}
for each cycle $ \set{\set{i_1, j_1}, \set{j_1, i_2}, \set{i_2, j_2}, \ldots,
	\set{i_m, j_m}, \set{j_m, i_1}} \subseteq E $
of length $ 2m $ in $ G $ for some~$m$,
where $ i_1, \ldots, i_m $ are from different subsets of the partition~$ \Is $.
Note that only even cycles are possible,
because~$G$ is a bipartite graph.
Furthermore, to obtain facets we only need to consider cycles
which touch at most one node per subset.
Otherwise, \cref{eq:cycle_general_class_short}
is the sum of multiple cycle inequalities fulfilling this property.

The basic cycle inequalities~\cref{eq:cycle_general_class_short}
together will all inequalities obtained from them via switchings
are commonly subsumed under the name \emph{cycle inequalities},
a notion which we adopt as well.
The new copy operation further enlarges this facet class
to include those induced by
\begin{equation}
	\sum_{i \in S_1} \left(-z_{i j_1} + z_{i j_m} \right)
		+ \sum_{p = 2}^m \left(-\y_{j_p}
		+ \sum_{i \in S_p}
			\left(-x_i + z_{i j_{p - 1}} + z_{i j_p}\right)\right) \leq 0,
\label{eq:cycle_general_class_short_copy}
\end{equation}
for each simple, chordless cycle $ (\set{I_1, j_1}, \set{j_1, I_2}, \set{I_2, j_2}, \ldots,
	\set{I_m, j_m}, \set{j_m, I_1}) \subseteq \hat{E} $ in $ \GG $
of size~$ 2m $ for some~$m$
and for all non-empty subsets $ S_1 \subseteq \I_1, \ldots, S_m \subseteq \I_m $.
These inequalities define facets if the cycle is chordless,
as we will see in~\cref{Thm:Cycle+Copying_Facets},
otherwise they can be split into two shorter cycle inequalities.
We refer to inequalities~\cref{eq:cycle_general_class_short_copy}
as well as all their switchings as the \emph{cycle+copying inequalities}.
In \cref{fig:cycle}, the support of these inequalities
is also shown in matrix form, a notation we adopt from \cite{collins2004relevant}.
The values in the second column and the second row of the matrix
are the coefficients of the involved $x$- and $y$-variables respectively,
and the lower right matrix contains the coefficients
of the corresponding $z$-variables.
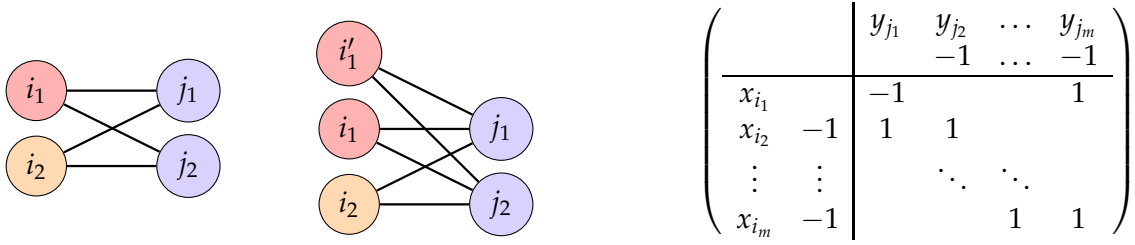
\begin{figure}
	\null\hfill\null
	\begin{subfigure}[b]{0.25\textwidth}
		\begin{tikzpicture}[baseline=(current bounding box.center),xscale=1,yscale=1]
			\node[shape=circle,draw=black, fill=red!30] (B) at (0,1) {$i_1$};
			\node[shape=circle,draw=black, fill=orange!30] (A) at (0,0) {$i_2$};
			\node[shape=circle,draw=black, fill=lightmauve] (D) at (2,0) {$j_2$};
			\node[shape=circle,draw=black, fill=lightmauve] (E) at (2,1) {$j_1$};
			\path [-, line width=0.3mm, draw=black] (B) edge node[left] {} (E);
			\path [-, line width=0.3mm, draw=black] (B) edge node[left] {} (D);
			\path [-, line width=0.3mm, draw=black] (A) edge node[left] {} (D);
			\path [-, line width=0.3mm, draw=black] (A) edge node[left] {} (E);
		\end{tikzpicture}
	\end{subfigure}
	\null\hfill\null
	\begin{subfigure}[b]{0.25\textwidth}
		\begin{tikzpicture}[baseline=(current bounding box.center),xscale=1,yscale=1]
			\node[shape=circle,draw=black, fill=red!30] (B) at (0,1) {$i_1$};
			\node[shape=circle,draw=black, fill=red!30] (C) at (0,2) {$i'_1$};
			\node[shape=circle,draw=black, fill=orange!30] (A) at (0,0) {$i_2$};
			\node[shape=circle,draw=black, fill=lightmauve] (D) at (2,0) {$j_2$};
			\node[shape=circle,draw=black, fill=lightmauve] (E) at (2,1) {$j_1$};
			
			\path [-, line width=0.3mm, draw=black] (C) edge node[left] {} (D);
			\path [-, line width=0.3mm, draw=black] (C) edge node[left] {} (E);
			\path [-, line width=0.3mm, draw=black] (B) edge node[left] {} (E);
			\path [-, line width=0.3mm, draw=black] (B) edge node[left] {} (D);
			\path [-, line width=0.3mm, draw=black] (A) edge node[left] {} (D);
			\path [-, line width=0.3mm, draw=black] (A) edge node[left] {} (E);
		\end{tikzpicture}
	\end{subfigure}
	\null\hfill\null
	\begin{subfigure}[b]{0.48\textwidth}
		\begin{equation*}
		\left(\begin{array}[c]{cc|ccccc}
		&    & \y_{j_1} & \y_{j_2} & \ldots &\y_{j_m} \\
		&    &       &   -1    &  \ldots  & -1 \\ \hline
		x_{i_1} &  &   -1   &      &       & 1\\
		x_{i_2} &  -1  &   1    &    1  &     &  \\
		\vdots & \vdots &       &   \ddots &\ddots          & \\
		x_{i_m} &  -1  &      &      & 1   & 1
		\end{array}\right)
		\end{equation*}
	\end{subfigure}
	\null\hfill\null
	\caption{Support graph of the shortest possible
		cycle inequality~\cref{eq:cycle_general_class_short}
		induced by four nodes $ \set{i_1, j_1, i_2, j_2} $
		with $ i_1, i_2 \in X $ from different subsets in $ \Is $
		and $ j_1, j_2 \in Y $ (left).
		Support graph of the shortest possible
		cycle+copying inequality~\cref{eq:cycle_general_class_short_copy}
		induced by the four nodes from before and an additional node
		$ i'_1 \in X $ from the same subset as $ i_1 $ (middle).
		The support of a basic cycle inequality in matrix form (right).}
	\label{fig:cycle}
\end{figure}
	
\begin{theorem}[Cycle+copying facets]
	The cycle+copying inequalities~\cref{eq:cycle_general_class_short_copy}
	define facets for $ \zP(G, \Is) $ if the underlying cycle is chordless.
	\label{Thm:Cycle+Copying_Facets}
\end{theorem}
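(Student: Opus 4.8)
The plan is to show that the cycle+copying inequality \cref{eq:cycle_general_class_short_copy} is facet-defining by exhibiting a full-dimensional collection of affinely independent feasible points on which it is tight, i.e. $\abs{X} + \abs{Y} + \abs{E} $ many such points together with one more (or, equivalently, arguing via the standard rank argument that any valid inequality tight on all of them is a scalar multiple of the cycle+copying inequality). By \cref{prop:facet-ext} it suffices to treat the induced subgraph on the nodes $ \bigcup_{p} S_p \cup \set{j_1, \ldots, j_m} $ together with all incident edges — so in fact we may assume the support of the inequality spans the whole graph. By the switching symmetry of \cref{lemma:sym}, which preserves the $Y$-support and the facet property, it is moreover enough to handle the ``unswitched'' representative \cref{eq:cycle_general_class_short_copy} itself; all switchings are then facets automatically.

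First I would recall that for $ BQP $ the basic cycle inequality \cref{eq:cycle_general_class_short} on a chordless $2m$-cycle is facet-defining (this is Padberg's result, via the chordless-cycle facet theorem for $ QP $ combined with \cref{prop:facet-ext}), and I would reuse the affinely independent point family that certifies this on the underlying cycle in $ G $ taking one representative node $ i_p \in S_p $ per subset. Call that facet $ F_0 $ of the sub-$ BQP $. The key new ingredient is the copy operation: passing from one representative $ i_p $ to the full set $ S_p $ and from the cycle in $ \GG $ back to $ G $. I would argue that structure-preserving copying, as in \cref{def:subset_lifting_validness}, applied to a facet of this shape produces again a facet, by bootstrapping: add the extra nodes $ i \in S_p \setminus \set{i_p} $ one subset at a time, and for each newly added node produce the missing affinely independent points. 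Concretely, for a node $ i \in S_p $ with $ i \neq i_p $, the points $ e_i $, $ e_i + e_{j_{p-1}} + e_{i j_{p-1}} $ and $ e_i + e_{j_p} + e_{i j_p} $ are all tight in \cref{eq:cycle_general_class_short_copy} (the two $z$-terms with coefficient $ +1 $ cancel the $ -x_i $ and $ -y_{j} $ terms), and together with translates of the old points they supply exactly the $ 1 + \abs{N(i)} $ new coordinate directions associated with $ i $ and its incident edges. Doing this for every $ i $ in every $ S_p $ lifts $ F_0 $ to a facet of $ \zP(G,\Is) $.

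The main obstacle is the bookkeeping around the multiple-choice constraints: the points used for $ BQP $ freely set several $ x_i $ with $ i $ in the same subset to $1$ simultaneously, which is now infeasible, so the classical point family must be repaired. The right fix is to verify that, because the cycle touches each $ I_p $ in a set $ S_p $ and within a subset the inequality only ever ``sees'' one active node at a time (all $ x_i $, $ i \in S_p $, have the same coefficient $ -1 $ and the $z$-coefficients are likewise uniform across $ S_p $), one can always reroute: whenever an old point activated $ i_p $, replace it by a point activating the intended member of $ S_p $, or by $0$ there, without leaving the cycle face and without violating $ \sum_{i \in I} x_i \le 1 $. I would organize this as: (i) fix a base family on the pure cycle (one node per subset) proving tightness and affine independence of $ 1 + \abs{\text{cycle support}} $ points; (ii) for each additional $ i \in S_p $, append the three points above and check that the resulting enlarged matrix still has full rank by a triangularity argument on the new columns; (iii) invoke \cref{prop:facet-ext} to $0$-lift from the support subgraph back to $ G $. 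Finally, I would note that chordlessness is exactly what guarantees no point is forced to make two non-adjacent cycle-edges tight at once, which is where a chord would let \cref{eq:cycle_general_class_short_copy} decompose as a sum of two shorter cycle+copying inequalities and hence fail to be facet-defining — so the hypothesis is used precisely here.
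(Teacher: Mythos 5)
Your high-level plan (tight affinely independent points, reuse of the $BQP$ cycle-facet family on one representative per subset, then extra points for the copied nodes) is the same shape as the paper's argument, but two of your load-bearing steps do not work as stated. The reduction to the induced subgraph on $\bigcup_p S_p \cup \set{j_1,\ldots,j_m}$ is not licensed by \cref{prop:facet-ext}: that proposition requires $\hat{X} = X$ and $\hat{\Is} = \Is$, so it only permits deleting $Y$-nodes and their incident edges. Re-inserting the nodes of $I_p \setminus S_p$ (and the subsets of $\Is$ outside the cycle) is an extension of the partition, and the paper notes right after \cref{prop:facet-ext} that this operation does \emph{not} preserve facets in general, with \cref{rlt:3} as an explicit counterexample. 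Showing that the $0$-coefficients on $I_p \setminus S_p$ and on the remaining subsets still give a facet is precisely the non-trivial content of \cref{Thm:Cycle+Copying_Facets}; it has to be established by exhibiting tight points for those nodes and edges (the paper does this in steps 22--34 of its construction and in the closing case distinction for $i \in I_r$), not assumed via a lifting lemma that is false in this generality.

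The second problem is that the three points you attach to a copied node $i \in S_p \setminus \set{i_p}$ are mostly not on the face. For $p \geq 2$ the coefficient of $x_i$ in \cref{eq:cycle_general_class_short_copy} is $-1$, so $e_i$ evaluates to $-1$; the point $e_i + e_{j_p} + e_{i j_p}$ evaluates to $-y_{j_p} - x_i + z_{i j_p} = -1$; and $e_i + e_{j_{p-1}} + e_{i j_{p-1}}$ is tight only when $p = 2$ (otherwise the extra $-y_{j_{p-1}}$ survives). Your parenthetical justification would require both incident cycle edges of $i$ to be active simultaneously, which is a different point and is still not tight for $p \geq 3$. Moreover, three points cannot supply the $1 + \abs{N(i)}$ new affinely independent directions needed per added node once $m \geq 3$ (and, on the full graph, $\abs{N(i)} = \abs{Y}$). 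The construction that works activates $i$ together with a path along the cycle, exactly as for the representative $i_p$ --- i.e.\ one substitutes $i$ for $i_p$ in the base family --- plus separate points fixing the coefficients of the edges from $i$ to $Y \setminus \set{j_1,\ldots,j_m}$. Finally, a facet of the full-dimensional $\zP(G,\Is)$ needs exactly $\abs{X} + \abs{Y} + \abs{E}$ affinely independent tight points, not that many ``together with one more''.
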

\begin{proof}
	Let the subset $ C \coloneqq (\set{I_1, j_1}, \set{j_1, I_2}, \set{I_2, j_2}, \ldots,
		\set{I_m, j_m},\set{j_m, I_1}) \subseteq \hat{E} $
	be a chord\-less cycle of size~$ 2m $ in $ \GG $ for some $ m \geq 2 $,
	and let the subsets $ S_1 \subseteq \I_1, \ldots, S_m \subseteq \I_m $
	be non-empty.
	If $ \card{S_1} = \ldots = \card{S_m} = 1 $,
	\cref{eq:cycle_general_class_short_copy}
	is the basic cycle inequality for $ BQP(G) $,
	and its validity can be inferred from~\cref{prop:lifting_valid}.
	If $ \card{S_p} > 1 $ for some $ p \in [m] $,
	validity follows from \cref{lemma:subset_lifting_validness_ok}.
	
	Now, let $ a^T(x, y, z) \leq b $
	be a facet-defining inequality for $ \zP(G, \Is) $
	which contains the face~$F$ induced by \cref{eq:cycle_general_class_short_copy}.
	We then choose arbitrary representatives $ i_1 \in I_1, \ldots, i_m \in I_m $
	and show that~$a$ and~$b$ are multiples
	of the coefficients of inequality~\cref{eq:cycle_general_class_short_copy}
	by constructing $ \card{X} + \card{Y} + \card{E} $
	affinely independent points on~$F$ via \cref{Alg:Cycle_Facets}.
	\begin{algorithm}[h]
		\begin{algorithmic}[1]
			\State From $ 0 \in F $ follows $ b = 0 $
			\State From $ \e_{j_1} \in F $ follows $ \a_{j_1} = 0 $
			\State From $\e_{i_1} \in F$ follows $ \a_{i_1} = 0 $
			\State Let $ u \leftarrow e_{j_1} $
			\For{$ h = 2, \ldots, m $}
				\State $ u \leftarrow u + \e_{i_h} + \e_{i_h j_{h - 1}} $
					\Comment{$u$ could not be on $F$ if the cycle had a chord}
				\State From $ u \in F $ follows $ \a_{i_h} = -a_{i_h j_{h - 1}} $
				\State $ u \leftarrow u + \e_{j_h} +  \e_{i_h j_h} $
				\State From $ u \in F $ follows $ \a_{j_h} = -a_{i_h j_h} $
			\EndFor
			\State Let $ v \leftarrow \e_{i_1} + \e_{j_m} + \e_{i_1 j_m} $
			\State From $ v \in F $ follows $ \a_{j_m} = -\a_{i_1 j_m} $
			\For{$ h = m, \ldots, 2 $}
				\State $ v \leftarrow v + \e_{i_h} +  \e_{i_h j_h} $
				\State From $ v \in F $ follows $ \a_{i_h} = -a_{i_h j_h} $
				\State $ v \leftarrow w + \e_{j_{h - 1}} + \e_{i_h j_{h - 1}} $
				\If{$ h > 2 $}
					\State From $ v \in F $
						follows $ \a_{j_{h - 1}} = -a_{i_h j_{h - 1}} $
				\EndIf
			\EndFor
			\State From $ v + \e_{i_1 j_1} \in F $
				follows $ \a_{i_1 j_1} = -\a_{i_2 j_1} $
			\State Let $ \hat{Y} \leftarrow Y \setminus \set{j_1, \ldots, j_m} $
				and $ \hat{I} = \Is \setminus \set{I_1, \ldots, I_m} $
			\State From $ \e_j \in F $ follows $ \a_j = 0 $ for all $ j \in \hat{Y} $
			\State From $ \e_i \in F $ follows $ \a_i = 0 $
				for all $ i \in I $ with $ I \in \hat{I} $
			\State From $ \e_j + \e_i + \e_{ij} \in F $
				follows $ \a_{ij} = 0 $ for all $ \set{i, j} \in E $
					with $ i \in I $, $ I \in \hat{I}, j \in \hat{Y} $
			\State From $ \e_{j_1} + \e_i + \e_{ij_1} \in F $
				follows $ \a_{i j_1} = 0 $ for all $ \set{i, j_1} \in E $
				with $ i \in I $, $ I \in \hat{I} $
			\State From $ \e_j + \e_{i_1} + \e_{i_1 j} \in F $
				follows $ \a_{i_1 j} = 0 $
				for all $ \set{i_1, j} \in E $ with $ j \in \hat{Y} $
			\State Let $ w \leftarrow e_{j_1} $
			\For{$ h = 2, \ldots, m $}
				\State $ w \leftarrow w + \e_{i_h} + \e_{i_h j_{h - 1}} $
				\State From $ w + \e_j + \e_{i_h j} \in F $
					follows $ \e_{i_h j} = 0 $ for all $ \set{i_h, j} \in E $
					with $ j \in \hat{Y} $
				\State $ w \leftarrow w + \e_{j_h} + \e_{i_h j_h} $
				\State From $ w + \e_i + \e_{ij_h} \in F $
					follows $ \e_{ij_h} = 0 $
					for all $ \set{i, j_h} \in E $ with $ i \in I $, $ I \in \hat{I} $
			\EndFor
		\end{algorithmic}
		\caption{Construction of affinely independent points on a cycle facet}
		\label{Alg:Cycle_Facets}
	\end{algorithm}
	
	The copy operation produces different possibilities
	for the remaining coefficients.
	Indeed, for each $ i \in I_r $, $ r = 1, \ldots, m $,
	there are two possibilities:
	either all coefficients associated with~$i$
	(for the $x$- and $z$-variables) are $0$,
	then similar steps as 22--34 are necessary,
	or they are identical to the coefficients of the $ i_r $,
	which can be seen by replacing~$i$ with~$i_r$ in \cref{Alg:Cycle_Facets}.
	This proves the claim.
\end{proof}
We remark that if the graph is not \fancygraphname,
inequality~\cref{eq:cycle_general_class_short_copy}
can be defined accordingly in terms of cycles of the original graph,
and the sets $ S_1, \ldots, S_m $ may only contain nodes with the same neighbourhood.

For the facet classes of $ \zP(G, \Is) $ we derive in the following,
we assume~$G$ to be a complete bipartite graph
to allow for a simpler presentation. 

\subsubsection{$ I_{mm22} $ Bell inequalities}
\label{sec:bell}

A second prominent class of valid inequalities for $ BQP $
are the $ I_{mm22} $ Bell inequalities,
see \eg \cite{avis2007new, collins2004relevant, werner2001bell}.
Their $0$-lifted version can be stated as
\begin{equation}
	- \y_{\j_1}  
		- \sum^m_{k = 1} (m - k) \x_{i_k} 
		- \sum_{\substack{2 \leq p, k \leq m:\\p + k = m + 2}} z_{i_p j_k} 
		+ \sum_{\substack{1 \leq p, k \leq m:\\p + k < m + 2}} z_{i_p j_k} \leq 0,
	\label{equ:bell}
\end{equation}
for pairwise distinct $ I_1, \ldots, I_m \in \Is $,
$ i_1 \in I_1, \ldots, i_m \in I_m $
as well as pairwise distinct $ j_1, \ldots, j_m \in Y $ for some $ m \geq 2 $,
\cf \cref{Fig:Support_Bell}.
These inequalities induce facets as well.
\begin{figure}[h]
	\begin{equation*}
		\left(\begin{array}{cc|cccc}
					&   		& \y_{j_1}	& \y_{j_2}	& \ldots	& \y_{j_m}\\
					&    		& -1		&			&			& \\\hline
			x_{i_1} & -(m - 1)	& 1			& 1			& \ldots   	& 1\\
			x_{i_2} & -(m - 2)  & 1			& \iddots	& \iddots   & -1\\
			\vdots	& \vdots 	& \vdots    & \iddots	& \iddots   &\\
			x_{i_m} & 0 		& 1  		& -1		&			& 
			\end{array}\right)
	\end{equation*}
	\caption{Support of the $ I_{mm22} $ Bell inequalities}
	\label{Fig:Support_Bell}
\end{figure}
\begin{theorem}[$ I_{mm22} $ Bell facets]
	Inequalities~\cref{equ:bell} define facets of $ \zP(G, \Is) $
	if~$G$ is a complete bipartite graph.
	\label{thm:bell}
\end{theorem}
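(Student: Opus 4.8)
The plan is to follow the standard recipe for proving facetness of $BQP$-type polytopes: first establish validity of \cref{equ:bell}, then exhibit $|X| + |Y| + |E|$ affinely independent points lying on the face $F$ it induces, and conclude that any valid inequality $a^T(x,y,z) \le b$ whose zero-set contains $F$ must be a scalar multiple of \cref{equ:bell}. Since we may assume $G$ is complete bipartite and full-dimensionality is known from \cref{thm:dim}, it suffices to pin down all coefficients $a_i$ ($i \in X$), $a_j$ ($j \in Y$), $a_{ij}$, and $b$ up to a common factor, using points of the form $0$, $e_v$, and $e_i + e_j + e_{ij}$ together with a few more structured tight points along the "staircase" pattern visible in \cref{Fig:Support_Bell}.

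First I would verify validity: evaluate the left-hand side of \cref{equ:bell} on an arbitrary vertex $(x,y,z)$ of $\zP(G,\Is)$. Because $x \in X^\Is$, at most one of $x_{i_1}, \dots, x_{i_m}$ is $1$; splitting into the case where all are $0$ (then $z_{i_p j_k} = 0$ for all involved terms and only $-y_{j_1} \le 0$ remains) and the case $x_{i_r} = 1$ for a unique $r$ (then $z_{i_r j_k} = y_{j_k}$ and $z_{i_p j_k} = 0$ for $p \ne r$), the sum collapses to a linear expression in the $y_{j_k} \in \{0,1\}$, and a short counting argument over the index condition $p + k \le m+1$ versus $p + k = m+2$ shows it is nonpositive. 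This is essentially the classical Bell-inequality validity argument and I would expect it to be routine, perhaps citing the $BQP$ literature (\cite{collins2004relevant, avis2007new}) for the underlying combinatorial inequality and only noting that the multiple-choice constraint is exactly what makes "at most one $x_{i_r}$ active" hold.

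The bulk of the proof is the affine-independence construction, which I would organize into an algorithm in the same spirit as \cref{Alg:Cycle_Facets}. Starting from $0 \in F$ we get $b = 0$; from $e_{j_1} \in F$, $a_{j_1} = 0$; from $e_{i_m} \in F$ (the variable with coefficient $0$ in \cref{equ:bell}), $a_{i_m} = 0$. Then, walking along the staircase, points like $e_{i_k} + e_{j_1} + e_{i_k j_1}$, $e_{i_k} + e_{j_1} + e_{j_2} + \dots$ built so that the Bell left-hand side stays $0$, successively force the relations among $a_{i_k}$, $a_{j_k}$ and $a_{i_p j_k}$ on the support to match the matrix coefficients, and $0$-coefficient points $e_j$, $e_i$, $e_j + e_i + e_{ij}$ for the off-support nodes and edges kill all remaining coefficients (here completeness of $G$ guarantees every needed $e_i + e_j + e_{ij}$ is actually a vertex). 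Care is needed to make each constructed point a genuine vertex of $\zP(G,\Is)$ — in particular never activating two $x$-variables from the same subset $I_r$ — and to present the points in an order making the affine-independence transparent (e.g.\ each new point introduces exactly one new coordinate with a nonzero increment).

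The main obstacle I anticipate is bookkeeping: there are $m$ subsets, $m$ $y$-nodes, and an $m \times m$ block of $z$-coefficients with the triangular "staircase" sign pattern $p + k \le m+1$ vs.\ $p+k = m+2$ vs.\ $p+k > m+2$ (the last being off-support), so the construction must produce enough tight points to separately determine every coefficient on the support while respecting the multiple-choice constraints — this is more intricate than the cycle case because the support is two-dimensionally structured rather than a single cycle. A secondary subtlety is that, as in \cref{Thm:Cycle+Copying_Facets}, one should check whether copying applies; but since the theorem as stated only claims facetness of the $0$-lifted inequality \cref{equ:bell} for complete bipartite $G$, I would keep the argument focused on that single inequality, remarking afterwards (as the paper does elsewhere) that switchings preserve the $Y$-support and hence the facet property, and that a copied version can be treated by the same replacement trick used at the end of the cycle proof.
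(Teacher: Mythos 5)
Your plan takes a genuinely different route from the paper. The paper does not build $\card{X}+\card{Y}+\card{E}$ affinely independent points from scratch: it starts from the case where every subset of the partition is a singleton, so that $\zP(G,\Is)=BQP(G)$ and the $I_{mm22}$ inequalities are already known to be facet-defining there, and then inducts by enlarging one subset $\I\in\Is$ by a single new node~$i$ at a time. Since each such step adds only $1+\card{Y}$ coordinates ($x_i$ and the edges $z_{ij}$), the extensions of the old facet's vertices still lie on the new face, and only $\card{Y}+1$ additional affinely independent tight points are needed; the paper writes these down explicitly ($e_i$, then $e_i+e_j+e_{ij}$ for $j\neq j_1$, and one further point involving $i_m$ or $i_{m-1}$). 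This sidesteps entirely the two-dimensional staircase bookkeeping that you correctly identify as the main obstacle of the direct approach. Your route is viable in principle, but it re-proves the $BQP$ facetness of the Bell inequalities along the way, and the essential construction is left unexecuted.

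Beyond being incomplete, the sketch contains a concrete error: $e_{j_1}$ is \emph{not} on the face~$F$ induced by \cref{equ:bell}, since the left-hand side evaluates to $-1$ there (the coefficient of $y_{j_1}$ is $-1$). Consequently the step ``from $e_{j_1}\in F$ follows $a_{j_1}=0$'' cannot be carried out --- and indeed it must fail, because on the facet one has $a_{j_1}=-1\neq 0$ up to scaling. A correct direct construction has to extract $a_{j_1}$ from combined points such as $e_{i_m}+e_{j_1}+e_{i_m j_1}$, which only yields the relation $a_{j_1}=-a_{i_m j_1}$; similarly, $e_{i_k}+e_{j_1}+e_{i_k j_1}$ is tight only for $k=m$. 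So the ``staircase'' points would all need to be chosen with considerably more care than the plan suggests; as it stands, the heart of the argument is missing.
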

\begin{proof}
	From \cref{cor:compl_onesubset},
	we know that inequalities~\cref{equ:bell} define facets
	if all subsets of the partition contain only one element.
	To prove the claim via induction,
	we show that if we increase one subset $ \I \in \Is $
	by one new node~$i$,
	the corresponding extension of this inequality still defines a facet.
	Let $F$ be the face defined by this extended inequality.	
	The extensions of the vertices of the original facet
	also lie on~$F$.
	Thus, we only need to construct $ (\card{Y} + 1) $-many
	additional affinely independent points on~$F$.
	We can start by choosing $ \e_i \in F $
	and $ \e_i + \e_j + \e_{ij} \in F $ for $ j \in Y \setminus \set{j_1} $.
	In case $ \I \neq \I_m $,
	we can next use $ \e_i + \e_{i_m} + \e_{j_1} + \e_{i j_1} + \e_{i_mj_1} \in F $.
	Otherwise, we can select $ \e_i + \e_{i_{m - 1}}
		+ e_{j_2} + \e_{j_1} + \e_{i j_1} + \e_{i_{m - 1} j_1}
		+ \e_{i j_2} + \e_{i_{m - 1} j_2} \in F $,
	concluding the proof.
\end{proof}
The above proof also works if the support
of the $ I_{mm22} $ Bell inequality to be lifted
is a complete bipartite subgraph of~$G$.

Note that we have only considered one of the two possible valid inequalities
for $ \zP(G, \Is) $ which could be derived
from the original $ I_{mm22} $ Bell inequalities for $ BQP $.
A similar proof as above also works for swapped coefficients
of~$x$ and~$y$ in inequality~\cref{equ:bell}.

\paragraph{Further facets from BQP}

We have shown exemplarily for two classes of facet-defining inequalities for~$ BQP $
that they can be $0$-lifted to produce facets for $ \zP(G, \Is) $.
For the Bell inequalities,
the corresponding proof requires only a simple extension
of the original proof for $ BQP $.
Furthermore, we have seen that the copy operation
allows us to significantly enlarge a given inequality class.
All the resulting inequalities can be facet-defining,
as was the case for the cycle inequalities.
However, proving this fact was much more involved,
as the necessary affinely independent points on the facet
needed to be constructed from scratch. 
Our computational experiments with facet enumeration tools
like \emph{polymake} (see \cite{polymake:2000})
for small instances hint that there may be many more facet classes
which $ P(G, \Is) $ inherits from $ BQP $,
and their variants obtained via copying
could be facet-defining as well.

In \cite{avis2005two}, the authors introduce a technique
called \emph{triangular elimination},
which transforms a facet of the cut polytope on an appropriate graph
to a facet of $ BQP $ defined on the complete bipartite graph
(see \cite{avis2008generating} for the non-complete case). 
The cut polytope is very well investigated,
and there are many known facet classes for it,
like the hypermetric, clique-web and parachute inequalities and many more
(see \cite{deza1997geometry}). 
The triangular eliminations of these inequalities
give rise to a rich pool of facet classes for $ BQP $,
for which it could be tested as well
if they are $0$-liftable to facets for $ \zP(G, \Is) $
and if their copyings define facets, too.

\subsection{Novel facets for $ \zP(G, \Is) $}

Besides the facets inherited from $ BQP $, the polytope $ \zP(G, \Is) $
has a large variety of facets specifically induced
by the multiple-choice constraints.
In the following, we introduce a very rich template
of cutting planes for $ \zP(G, \Is) $,
which we call the $ 1, m $-inequalities.
As two special cases of this inequality,
we present the facet-inducing \arrowone{} and \arrowtwo{} inequalities,
whose switchings and copyings are facet-defining as well.

\subsubsection{$ 1, m $-inequalities}

We define the class of \emph{$ 1, m $-inequalities} as
\begin{equation}
	a_{i_1} x_{i_1} 
		- \sum_{p = 1}^k a_{j_p} \y_{j_p} 
		+ \sum_{p = 1}^m a_{i_1j_p}  z_{i_1j_p} 
		+ \sum_{h = 2}^m \sum_{p = 1}^m  a_{i_h j_p} z_{i_h j_p} \geq 0,
	\label{equ:new-valid-inq1}
\end{equation}
with distinct $ I_1, I_2 \in \Is $, $ i_1 \in I_1 $,
pairwise distinct $ i_2, \ldots, i_m \in I_2 $
as well as pairwise distinct $ j_1, \ldots, j_m \in Y $ for some $ m \geq 3 $
and $ 1 \leq k \leq m$.
The following conditions
guarantee their validity for $ P(G, \Is) $.
\begin{theorem}
	Inequality \cref{equ:new-valid-inq1} is valid and supporting for $ P(G, \Is) $
	if its coefficients satisfy all of the following conditions:
	\begin{enumerate}[label=(\roman*)]
		\item $ a_{i_1 j_p} = -1, \quad \forall p = 1, \ldots, m $
		\item $ a_{j_p} = 1, \quad \forall p = 1, \ldots, k $
		\item $ a_{i_1} = m - k $
		\item $ a_{i_h j_p} \in \set{0, -1},
				\quad \forall p = 1, \ldots, k, \quad \forall h = 2, \ldots, m$
		\item $ a_{i_h j_p} \in \set{0, 1},
				\quad \forall p = 1, \ldots, k, \quad \forall h = 2, \ldots, m $
		\item $ \sum_{p = 1}^k a_{i_h j_p}
				+ \sum_{p \in R} \left(a_{i_h j_p} - 1 \right) \geq -(m - k),
				\quad \forall h = 2, \ldots, m,
				\quad \forall R \subseteq \set{k + 1, \ldots, m} $
	\end{enumerate}
	\label{thm:1minequalities}
\end{theorem}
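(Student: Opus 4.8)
The plan is to verify validity directly on the binary points of $\zP(G,\Is)$, exploiting the multiple-choice structure on $I_1$ and $I_2$. Fix a feasible binary point $(x,y,z)$. Since $x\in \X^\Is$, at most one of the $x_{i_h}$, $h=1,\ldots,m$, is nonzero; more precisely, at most one variable among $\{x_i : i\in I_1\}$ and at most one among $\{x_i : i\in I_2\}$ equals $1$. This gives three cases to analyse: (1) $x_{i_1}=1$ and no $i_h$ with $h\ge 2$ is selected; (2) some $x_{i_h}=1$ with $h\in\{2,\ldots,m\}$ and $x_{i_1}=0$; (3) no relevant $x$-variable is $1$ (which includes the case where the selected node in $I_1$ or $I_2$ is outside our list). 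In each case, $z_{i_hj_p}=x_{i_h}y_{j_p}$, so the quadratic terms collapse and the left-hand side of~\cref{equ:new-valid-inq1} becomes an affine function of the $y_{j_p}$ alone, which we then minimise over $y\in\{0,1\}^k$ (the $y_{j_p}$ for $p>k$ do not appear). The conditions (i)--(vi) are exactly what is needed to make that minimum nonnegative in every case.

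Concretely: in case (3) all $z$-terms vanish, the expression reduces to $-\sum_{p=1}^k a_{j_p}y_{j_p} = -\sum_{p=1}^k y_{j_p} \ge -k$ by~(ii) — wait, this is negative, so case (3) alone cannot be the worst case unless $k=0$; hence I expect the intended reading is that whenever some $y_{j_p}=1$ the corresponding selection in $I_1$ is forced, i.e.\ one must combine the $x$- and $y$-behaviour more carefully. The right way is: using $a_{i_1}=m-k$ (iii) and $a_{i_1j_p}=-1$ (i), when $x_{i_1}=1$ the contribution of the $I_1$-block is $(m-k) - \sum_{p=1}^m y_{j_p}\cdot[\text{something}]$; together with $-\sum_{p=1}^k y_{j_p}$ from the explicit $y$-terms and the $I_2$-block $\sum_{h\ge2}\sum_p a_{i_hj_p}z_{i_hj_p}$, condition~(vi) — applied with $R = \{p>k : y_{j_p}=1 \text{ and } x_{i_h}=1\}$ — is precisely the inequality certifying nonnegativity in case~(2), while case~(1) follows from~(iii) bounding $(m-k)x_{i_1} - \sum_{p=1}^{\min(k,\cdot)} y_{j_p} \ge 0$ after noting at most $\ldots$ Actually the clean bookkeeping is: plug in, split the sum over $p\le k$ and $p>k$, use (iv)/(v) to bound individual $z$-coefficients, and reduce everything to~(vi).

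So the key steps, in order, are: (a) reduce to binary points via the remark after \cref{thm:dim} that $\zP$ is the convex hull of its integer points; (b) substitute $z_{i_hj_p}=x_{i_h}y_{j_p}$ and use the packing constraints on $I_1,I_2$ to reduce to the three cases above, in each of which the LHS is affine in $(y_{j_1},\ldots,y_{j_k})$; (c) in each case compute the minimum over $y\in\{0,1\}^k$ and show it is $\ge 0$, with case~(2) being governed by condition~(vi) evaluated at the set $R$ of indices $p>k$ for which $y_{j_p}=1$, case~(1) by conditions~(i)--(iii), and case~(3) being trivial (or subsumed); (d) exhibit one binary point attaining equality — e.g.\ the all-zero point, or $x_{i_1}=0$ with all $y_{j_p}=0$ — to conclude the inequality is supporting. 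The main obstacle I anticipate is the careful case analysis in step~(c): getting the signs right when mixing the $I_1$-block (coefficient $m-k$, edge coefficients $-1$) with the $I_2$-block (edge coefficients in $\{0,\pm1\}$ depending on whether $p\le k$), and recognising that the worst-case choice of which $y_{j_p}$ equal $1$ is exactly the free set $R\subseteq\{k+1,\ldots,m\}$ appearing in~(vi). Verifying that (iv) and (v) (not just (vi)) are genuinely needed — i.e.\ that they are not implied by (vi) — should also be checked, likely by a small example.
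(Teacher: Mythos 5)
Your overall strategy --- verify the inequality directly on the binary vertices by substituting $z_{i_hj_p}=x_{i_h}y_{j_p}$ and splitting into cases according to which $x$-variables are set --- is exactly the route the paper takes. However, your case enumeration has a genuine hole. You open by claiming that at most one of $x_{i_1},\ldots,x_{i_m}$ is nonzero, then correctly qualify this to ``at most one in $I_1$ and at most one in $I_2$'', yet you still list only three cases. Since $i_1\in I_1$ while $i_2,\ldots,i_m$ lie in the \emph{different} subset $I_2$, the configuration $x_{i_1}=1$ \emph{and} $x_{i_h}=1$ for some $h\geq 2$ is feasible, and it is precisely the case the theorem is about: it is the only one in which condition~(vi) is needed. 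There the left-hand side reduces (after the sign repair discussed below) to $(m-k)+\sum_{p\leq k}a_{i_hj_p}y_{j_p}+\sum_{p>k}\left(a_{i_hj_p}-1\right)y_{j_p}$, and (vi) with $R=\set{p>k : y_{j_p}=1}$, combined with (iv) to bound the $p\leq k$ part from below by $\sum_{p=1}^{k}a_{i_hj_p}$, is exactly what makes this nonnegative. You instead attach (vi) to your case~(2), where $x_{i_1}=0$ and some $x_{i_h}=1$; but there the left-hand side is $\sum_{p\leq k}\left(1+a_{i_hj_p}\right)y_{j_p}+\sum_{p>k}a_{i_hj_p}y_{j_p}$, which is nonnegative termwise from (iv) and (v) alone --- no set $R$ enters. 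The paper distinguishes the four cases given by $x_{i_1}\in\set{0,1}$ crossed with whether some $x_{i_h}$, $h\geq 2$, equals one, and assigns conditions (i)--(iii), respectively (ii)/(iv)/(v), respectively (i)--(vi), to the three nontrivial ones.

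On the sign issue you stumble over in your case~(3): you are right that, read literally, condition~(ii) together with the displayed inequality would make the point $e_{j_1}$ violate it; this is a sign slip in the statement (compare with the arrow inequalities, whose $y$-coefficients are $+1$, so the effective coefficient of $y_{j_p}$ must be $+a_{j_p}$ rather than $-a_{j_p}$; similarly the range in (v) should be $p=k+1,\ldots,m$, otherwise (iv) and (v) jointly force those coefficients to zero). Your proposed resolution --- that setting $y_{j_p}=1$ ``forces'' a selection in $I_1$ --- is not correct, as the $y$-variables are unconstrained. Once the sign is repaired, your case~(3) is genuinely trivial and your case~(1) follows from (i)--(iii) as you indicate. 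So the approach is the right one, but as written the proof is incomplete: the fourth case must be added and condition~(vi) relocated to it.
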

\begin{proof}
	Let $r$ be a vertex of $ P(G, \Is) $.
	We distinguish the following four exhaustive cases:
	for $ r_{i_1} = 0 $ and $ r_{i_h} = 0 $ for all $ h = 2, \ldots, m $,
	the inequality is trivially satisfied.
	For $ r_{i_1} = 1 $ and $ r_{i_h} = 0 $ for all $ h = 2, \ldots, m $,
	conditions~(i)--(iii) ensure validity.
	For $ r_{i_1} = 0 $ and $ r_{i_h} = 1 $
	for some $ h \in \set{2, \ldots, m} $,
	it is ensured by conditions~(ii), (iv) and (v).
	For $ r_{i_1} = 1 $ and $ r_{i_h} = 1 $
	for some $ h \in \set{2, \ldots, m} $,
	the inequality is valid by conditions~(i)--(vi).
	Furthermore, the inequality is always binding for the $0$-vector. 
\end{proof}

We now present two special cases
in which the $ 1, m $-inequalities define facets.

\subsubsection{\Arrowone{} inequalities}
The first class of facets is defined by
what we call the \emph{\arrowone{} inequalities}:
\begin{equation}
	(m - 1) x_{i_1} + \y_{j_1} - \sum_{p = 1}^m z_{i_1 j_p}
		+ \sum_{p = 2}^m \left(-z_{i_p j_1} + z_{i_p j_m} \right) \geq 0,
	\label{equ:new_facet1}
\end{equation}
for distinct $ I_1, I_2 \in \Is $, $ i_1 \in I_1 $,
pairwise distinct $ i_2, \ldots, i_m \in I_2 $
and pairwise distinct $ j_1, \ldots, j_m \in Y $
with $ m \geq 3 $.
Their support is depicted in \cref{fig:supp_arrow1} on the left.
As these inequalities arise from the $ 1,m $-inequalities
by setting $ k = 1 $, we know from \cref{thm:1minequalities}
that they are valid for $ \zP(G, \Is) $.
The next result shows that they are even facet-defining.
\begin{figure}[t]
	\null\hfill\null
	\begin{subfigure}[b]{0.5\textwidth}
		\begin{equation*}
			\left(\begin{array}{cc|cccc}
							&			& \y_{j_1}	& \y_{j_2}	& \ldots	& \y_{j_m}\\
							&			& 1			&			&			&\\\hline
				\x_{i_1}	& m - 1		& -1  		& -1		& \ldots	& -1\\
				\x_{i_2}	&			& -1		& 1			&			&\\
				\vdots		&			& \vdots	&			& \ddots	&\\
				\x_{i_m}	&			& -1		&			&			& 1
			\end{array}\right)
		\end{equation*}
	\end{subfigure}
	\null\hfill\null
	\begin{subfigure}[b]{0.45\textwidth}
		\begin{equation*}
			\left(\begin{array}{cc|cccc}
						&	& \y_{j_1}	& \y_{j_2}	& \ldots	& \y_{j_m}\\
						&	&			& 1			& \ldots	& 1\\\hline
				x_{i_1}	& 1	& -1		& -1		& \ldots	& -1\\
				x_{i_2} &	& 1			& -1		&			&\\
				\vdots	&	& \vdots	&			& \ddots	&\\
				x_{i_m} &	& 1			&			&			& -1
			\end{array}\right)
		\end{equation*}
	\end{subfigure}
	\null\hfill\null
	\caption{Support of the \arrowone (left) and \arrowtwo (right) inequalities}
	\label{fig:supp_arrow1}
\end{figure}
\begin{theorem}[\Arrowone{} facets]
	Inequality~\cref{equ:new_facet1}
	as well as all its copyings define facets of $ \zP(G, \Is) $
	if~$G$ is a complete bipartite graph.
	\label{Thm:Arrow-1_Facets}
\end{theorem}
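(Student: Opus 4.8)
The plan is to use the standard facet criterion together with full-dimensionality of $\zP(G,\Is)$ (\cref{thm:dim}): it suffices to show that any linear equation $c^{T}(x,y,z)=d$ holding for all vertices of $\zP(G,\Is)$ on the face $F$ induced by \cref{equ:new_facet1} (or by a copying of it) is a scalar multiple of \cref{equ:new_facet1}. Recall that the vertices of $\zP(G,\Is)$ are exactly the points in which $x$ is the incidence vector of a partial transversal $T\subseteq X$ (at most one node per subset of $\Is$), $y$ is the incidence vector of some $W\subseteq Y$, and $z_{ij}=x_{i}y_{j}$; since $G$ is complete bipartite, all of these products are coordinates, so every vertex used below genuinely lies in the polytope. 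The first step is to evaluate the left-hand side of \cref{equ:new_facet1} on such a vertex and to run a four-case distinction according to whether $i_{1}\in T$ and whether $T$ meets $\{i_{2},\dots,i_{m}\}$. In each case the expression collapses to a short linear form in the indicators $[j_{p}\in W]$, from which one reads off precisely which vertices belong to $F$; in particular $0\in F$ while $e_{i_{1}}\notin F$, so $F$ is a proper face and $d=0$.

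With $d=0$ fixed, I would determine $c$ entry by entry. For \cref{equ:new_facet1} itself, the vertices $e_{j}$ with $j\in Y\setminus\{j_{1}\}$ and $e_{i}$ with $i\in X\setminus\{i_{1}\}$ lie on $F$ and force $c_{j}=0$ for $j\neq j_{1}$ and $c_{i}=0$ for $i\neq i_{1}$. Plugging in the vertices $e_{i}+e_{j}+e_{ij}$ then yields $c_{ij}=0$ for all edges not incident to $i_{1}$ except the edges $\{i_{p},j_{1}\}$ and the diagonal edges $\{i_{p},j_{p}\}$ (see \cref{fig:supp_arrow1}), together with the relations $c_{i_{p}j_{1}}=-c_{j_{1}}$ for $p=2,\dots,m$. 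The coefficients still open --- all $c_{i_{1}j}$, the diagonal coefficients $c_{i_{p}j_{p}}$, and $c_{i_{1}},c_{j_{1}}$ --- are pinned down by subtracting the two instances of $c^{T}(x,y,z)=0$ over suitably chosen pairs of vertices of $F$ that differ in a single $y$-entry, in the node selected from $I_{2}$, or in a small combination thereof. In the right order, this gives successively $c_{i_{p}j_{p}}=-c_{i_{1}j_{p}}$; that all $c_{i_{1}j_{p}}$ coincide, say with value $-\lambda$, and $c_{i_{1}j}=0$ for $j\notin\{j_{1},\dots,j_{m}\}$; then $c_{j_{1}}=\lambda$ and hence $c_{i_{1}j_{1}}=-\lambda$; then, from the vertex with $T=\{i_{1}\}$ and $W=\{j_{2},\dots,j_{m}\}$, that $c_{i_{1}}=(m-1)\lambda$; and a last clean-up step makes the remaining coefficients on edges into $j_{1}$ vanish too. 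Thus $(c,d)$ is $\lambda$ times the coefficient vector of \cref{equ:new_facet1}, and $F$ is a facet.

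For the copyings the same chain goes through verbatim once $i_{1}$ is replaced by an arbitrary node of the nonempty set $S_{1}\subseteq I_{1}$ inheriting $i_{1}$'s coefficient tuple and each $i_{p}$ ($p\geq 2$) by an arbitrary node inheriting $i_{p}$'s tuple; such representatives exist precisely because we restrict to structure-preserving copyings. Two adjustments are needed: for a node $i\in S_{1}$ with $i\neq i_{1}$ the vertex $e_{i}$ no longer lies on $F$, so $c_{i}$ must be read off the vertex with $T=\{i\}$, $W=\{j_{2},\dots,j_{m}\}$ exactly as $c_{i_{1}}$ is; and the nodes of $I_{1}$ and $I_{2}$ carrying the zero tuple, together with all nodes of the remaining subsets of $\Is$, are treated just like the non-support nodes above, so that all their node- and incident-edge coefficients vanish. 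The main obstacle I expect is not conceptual but the bookkeeping: for each of the linearly many groups of coefficients one must produce a vertex, or a pair of vertices, of $F$ isolating exactly the desired relation, and verify each time --- via the four-case description of $F$ and the hypotheses $m\geq 3$ and completeness of $G$ --- that these vertices satisfy \cref{equ:new_facet1} with equality. Conceptually the argument, including the way the copy operation is absorbed, parallels the proof of \cref{Thm:Cycle+Copying_Facets}.
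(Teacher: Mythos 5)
Your proposal is correct and follows essentially the same route as the paper: an indirect facet proof that evaluates the inequality on the vertices of $\zP(G,\Is)$, identifies the face $F$ via the four-case analysis, and then pins down the coefficients of a containing hyperplane group by group using points (and differences of points) on $F$, with the copyings absorbed exactly as in the paper by substituting representatives carrying each coefficient tuple. The only noteworthy difference is a detail in your favour: you obtain the diagonal relation $c_{i_p j_p}=-c_{i_1 j_p}$ from a pair of case-D vertices with $T=\{i_1,i_p\}$, which avoids the point listed for that step in the paper's \cref{Alg:Arrow-1_Facets}, where two nodes of $I_2$ are selected simultaneously.
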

\begin{proof}
	We fix some distinct $ I_1, I_2 \in \Is $, a $ i_1 \in I_1 $
	as well as pairwise distinct $ i_2, \ldots, i_m \in I_2 $
	and pairwise distinct $ j_1, \ldots, j_m \in Y $
	for some $ m \geq 3 $.
	Let $ a^T(x, y, z) \leq b $ be a facet-defining inequality
	which contains the face~$F$ induced by \cref{equ:new_facet1}.
	In~\cref{Alg:Arrow-1_Facets}, we indicate how to construct
	$ \card{X} + \card{Y} + \card{E} $ affinely independent points on $F$	
	to show that $a$ and~$b$ are multiples of the coefficients of \cref{equ:new_facet1}.
	\begin{algorithm}
		\begin{algorithmic}[1]
			\State From $ 0 \in F $ follows $ b = 0 $
			\State From $ \e_{j_p} \in F$ follows $ \a_{j_p} = 0 $
				for all $ p = 2, \ldots, m $
			\State From $ \e_{i_h} \in F $ follows $ \a_{i_h} = 0 $
				for all $ h = 2, \ldots, m $
			\State From $ \e_{j_1} + \e_{i_h} + \e_{i_h j_1} \in F $
				follows $ \a_{i_h j_1} = -\a_{j_1} $ for all $ h = 2, \ldots, m $
			\State From $ \e_{j_1} + \e_{j_p}
				+ \e_{i_h} + \e_{i_h j_1} + \e_{i_h j_p} \in F $
				follows $ \a_{i_h j_p} = 0 $ for all $ h = 2, \ldots, m $
				and $ p = 2, \ldots, m $ with $ h \neq p $
			\State From $ \e_{j_1} + \e_{j_p}
				+ \e_{i_p} + \e_{i_l}
				+ \e_{i_pj_1} + \e_{i_lj_1} + \e_{i_pj_p} + \e_{i_lj_p} \in F $
				for some arbitrary $ l \in \set{2, \ldots, m} \setminus p $
				follows $ \a_{i_p j_p} = \a_{j_1} $ for all $ p = 2, \ldots, m $
			\State Let $ v \leftarrow \e_{j_1} + \e_{i_1}
				+ \e_{i_1 j_1} + \sum_{h = 2}^m (\e_{i_h} + \e_{i_hj_1}) $
			\State From $ v \in F $
				and $ v + \e_{j_p} + \e_{i_1 j_p} + \sum_{r = 2}^m \e_{i_r j_p} \in F $
				follows $ \a_{i_1 j_p} = -\a_{j_1} $ for all $ p = 2, \ldots, m $
			\State From $ \e_{i_1} + \sum_{h = 2}^m
				\left(\e_{j_h} + \e_{i_1 j_h}\right) \in F $
				follows $ \a_{i_1} = (m - 1) \a_{j_1} $
			\State From $ \e_{i_1} + \sum_{h = 1}^m
				\left(\e_{j_h} + \e_{i_1 j_h}\right) \in F $
				follows $ \a_{i_1 j_1} = -\a_{j_1} $
			\State Let $ \hat{Y} \leftarrow Y \setminus \set{j_1, \ldots, j_m} $
				and $ \hat{I} \leftarrow \Is \setminus \set{I_1, I_2} $
			\State From $ \e_j \in F $ follows $ \a_j = 0 $ for all $ j \in \hat{Y} $
			\State From $ \e_i \in F $ follows $ \a_i = 0 $
				for all $ i \in I $ with $ I \in \hat{I} $
			\State From $ \e_{j_p} + \e_i + \e_{ij_p} \in F $
				follows $ \a_{ij_p} = 0 $ for all $ i \in I $ with $I \in \hat{I} $
				and $ p = 2, \ldots, m $
			\State From $ \e_j + \e_i + \e_{ij} \in F $ follows $ \a_{ij} = 0 $
				for all $ i \in I $ with $ I \in \hat{I} $ and $ j \in \hat{Y} $
			\State From $ \e_{j_1} + \e_{i_2} + \e_{i_2 j_1} + \e_i + \e_{ij_1} \in F $
				follows $ \a_{ij_1} = 0 $ for all $ i \in I $ with $ I \in \hat{I} $
			\State From $ \e_{i_1} + \sum_{r = 2}^m
				\left(\e_{j_r} + \e_{i_1 j_r} \right) + \e_j + \e_{i_h j} \in F $
				follows $ \a_{i_1 j} = 0 $ for all $ j \in \hat{Y} $
			\State From $ \e_{i_h} + \e_j + \e_{i_h j} \in F $
				follows $ \a_{i_hj} = 0 $ for all $ h = 2, \ldots, m $
				and $ j \in \hat{Y} $
		\end{algorithmic}
		\caption{Construction of affinely independent points on an \arrowone{} facet}
		\label{Alg:Arrow-1_Facets}
	\end{algorithm}
	
	Copying lifts further variables into the inequality.
	For each $ i \in I_1 \setminus \set{i_1} $, there are two possibilities:
	either all coefficients associated with $i$ (for the $x$- and $z$-variables) are~$0$,
	then similar steps as 12--18 need to be done,
	or they are identical to those of~$ i_1 $,
	then the construction can be adapted by replacing~$i$ with~$ i_1 $
	in \cref{Alg:Arrow-1_Facets}.
	For $ i \in I_2 \setminus \set{i_2, \ldots i_m} $,
	there are $m$~possibilities:
	again, either all coefficients are $0$,
	or they coincide with those corresponding
	to some node from $ k \in \set{i_2, \ldots, i_m} $,
	in which case we can replace~$i$ with~$k$ in \cref{Alg:Arrow-1_Facets}.
	Therefore, the \arrowone{} inequalities and all their copyings define facets.
\end{proof}

\subsubsection{\Arrowtwo{} inequalities}
The second class of facets can be stated as
\begin{equation}
	x_{i_1} + \sum_{p = 2}^m \y_{j_p} - \sum_{p = 1}^m z_{i_1 i_p}
		+ \sum_{p = 2}^m \left(z_{i_p j_1} - z_{i_p j_p}\right) \geq 0,
\label{equ:new_facet2}
\end{equation}
for distinct $ I_1, I_2 \in \Is $, $i_1 \in I_1 $,
pairwise distinct $ i_2, \ldots, i_m \in I_2 $,
pairwise distinct $ j_1, \ldots, j_m \in Y $
and some $ m \geq 3 $.
The support of these \emph{\arrowtwo{} inequalities}
is shown in \cref{fig:supp_arrow1} to the right.
Their validity follows from \cref{thm:1minequalities} by setting $ k = m - 1 $,
and, similar as before, they are facet-defining.
\begin{theorem}[\Arrowtwo{} facets]
	Inequality~\cref{equ:new_facet2} as well as all its copyings
	define facets of $ \zP(G, \Is) $
	if~$G$ is a complete bipartite graph.
	\label{arrow2-facets-2}
\end{theorem}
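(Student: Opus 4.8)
The plan is to follow the proof of~\cref{Thm:Arrow-1_Facets}. Validity and the supporting property are already available: inequality~\cref{equ:new_facet2} is the case $k=m-1$ of the $1,m$-inequalities~\cref{equ:new-valid-inq1}, so~\cref{thm:1minequalities} applies, and the $0$-vector lies on the induced face~$F$. It therefore remains to prove that $F \subseteq \zP(G,\Is)$ is a facet. To this end I would fix distinct $I_1,I_2 \in \Is$, $i_1 \in I_1$, pairwise distinct $i_2,\ldots,i_m \in I_2$ and pairwise distinct $j_1,\ldots,j_m \in Y$, take an arbitrary facet-defining inequality $a^T(x,y,z) \leq b$ whose face contains~$F$, and exhibit $\card{X}+\card{Y}+\card{E}$ affinely independent vertices of $\zP(G,\Is)$ on~$F$, which forces $a$ and~$b$ to be multiples of the coefficients of~\cref{equ:new_facet2}. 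Note that a complete bipartite graph is trivially \fancygraphname{} (every node of $X$ has neighbourhood $Y$), so~\cref{prop:permu} and the copying argument are applicable.

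The construction rests on a short list of vertices which one checks to lie on $F$. From $0 \in F$ we get $b=0$; from $e_{j_1} \in F$ and $e_{i_h} \in F$ ($h \geq 2$) we get $a_{j_1}=0$ and $a_{i_h}=0$. Comparing $e_{i_1}+e_{j_1}+e_{i_1 j_1} \in F$ with its enlargements by $e_{j_p}+e_{i_1 j_p}$ gives $a_{i_1 j_p}=-a_{j_p}$ for $p \geq 2$, while the vertex itself gives $a_{i_1 j_1}=-a_{i_1}$; comparing $e_{i_p}+e_{j_p}+e_{i_p j_p} \in F$ with $e_{i_p} \in F$ gives $a_{i_p j_p}=-a_{j_p}$. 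The crucial step is to compare the mixed vertex $e_{i_1}+e_{i_h}+e_{j_h}+e_{i_1 j_h}+e_{i_h j_h} \in F$ with $e_{i_h}+e_{j_h}+e_{i_h j_h} \in F$: this yields $a_{i_1 j_h}=-a_{i_1}$, which together with $a_{i_1 j_h}=-a_{j_h}$ forces the common value $a_{i_1}=a_{j_2}=\ldots=a_{j_m}=:c$. Finally, the two-index vertices $e_{i_1}+e_{i_p}+e_{j_1}+e_{j_p}+e_{i_1 j_1}+e_{i_1 j_p}+e_{i_p j_1}+e_{i_p j_p} \in F$ and $e_{i_1}+e_{i_h}+e_{j_h}+e_{j_p}+e_{i_1 j_h}+e_{i_1 j_p}+e_{i_h j_h}+e_{i_h j_p} \in F$ (for distinct $h,p \geq 2$), compared with their obvious sub-vertices, pin down $a_{i_p j_1}=a_{j_p}=c$ and $a_{i_h j_p}=0$ for $h \neq p$. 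The coefficients of all nodes and edges outside the core $\set{i_1,\ldots,i_m}\cup\set{j_1,\ldots,j_m}$ are eliminated verbatim as in steps~12--18 of~\cref{Alg:Arrow-1_Facets}. Altogether $(a,b)=c\cdot(\text{coefficient vector of }\cref{equ:new_facet2},0)$ with $c \neq 0$ since $a \neq 0$, so~\cref{equ:new_facet2} defines a facet.

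For the copyings I would argue as in the final paragraph of the proof of~\cref{Thm:Arrow-1_Facets}: a structure-preserving copying only re-assigns and possibly duplicates the coefficient tuple of $i_1$ within $I_1$ and those of $i_2,\ldots,i_m$ within $I_2$, so by~\cref{prop:permu} we may assume these roles sit on $i_1,\ldots,i_m$ together with further nodes; each extra node of $I_1$ either carries the all-zero tuple (handled like the outside-core steps) or the tuple of $i_1$ (handled by substituting it for $i_1$ in the construction), and analogously each extra node of $I_2$ is either zero or duplicates one of $i_2,\ldots,i_m$. The main obstacle is the point construction itself, exactly as in the cycle and \arrowone{} cases: on every vertex of $F$ the variable $x_{i_1}$ is forced to be accompanied either by $y_{j_1}$ or by some pair $x_{i_h},y_{j_h}$, so no coefficient can be isolated directly and each relation must be extracted from a carefully matched pair of balanced vertices; one must then check that the resulting list is affinely independent and of the right cardinality. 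Extracting the identity $a_{i_1}=a_{j_p}$ and annihilating the off-diagonal coefficients $a_{i_h j_p}$ ($h \neq p$) hinge on the two-index mixed vertices above, whose membership in $F$ relies on a precise cancellation of the left-hand side of~\cref{equ:new_facet2}; verifying all these cancellations is the delicate part.
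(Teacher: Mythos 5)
Your proposal is correct and follows essentially the same route as the paper's proof in \cref{sec:arrow2-proof}: the same validity argument via \cref{thm:1minequalities} with $k=m-1$, the same strategy of pinning down $a$ and $b$ through pairs of vertices on~$F$ (indeed almost the identical list of points as in \cref{Alg:Arrow-2_Facets}), the same treatment of the coefficients outside the core, and the same two-/$m$-possibilities case analysis for the copyings. The only noteworthy difference is your choice of point for establishing $a_{i_h j_p}=0$ for $h\neq p$: you use $e_{i_1}+e_{i_h}+e_{j_h}+e_{j_p}+e_{i_1 j_h}+e_{i_1 j_p}+e_{i_h j_h}+e_{i_h j_p}$, which is feasible, whereas the corresponding step~10 of \cref{Alg:Arrow-2_Facets} as printed uses $e_{i_p}+e_{i_q}+\ldots$ with $i_p,i_q$ both in $I_2$, violating the multiple-choice constraint --- so your variant actually repairs an apparent typo in the paper's construction.
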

The proof is given in \cref{sec:arrow2-proof}.

\section{Separation algorithms}
\label{sec:sep}

For several of the facet classes derived in \cref{sec:fac-def-in},
we are able to give efficient separation algorithms.
Each of these separation routines
consists of an enumeration part over subsets of~$Y$,
where in each iteration an auxiliary combinatorial optimization problem
has to be solved.
We also discuss how these algorithms can be modified
in order to separate over all switchings and copyings as well
and what this means for their complexity.
For ease of exposition, we state all separation routines
for the case of a complete bipartite graph~$G$.

\subsection{Separation of cycle inequalities}

A well-known polynomial-time separation algorithm
for the cycle inequalities for $ QP $
is based on shortest-path-computations
and can be found in \cite{deza1997geometry}.
As the cycle inequalities for $ QP(G) $ remain valid for $ \zP(G, \Is) $,
this algorithm can still be used for separation;
however, not all cycle facets of $ QP(G) $
are still facets of $ \zP(G, \Is) $.
For a complete bipartite graph,
only cycles of size four define facets
and they dominate all longer cycle inequalities.
In \cref{alg:sep-cycle}, we show how to separate
over all cycle inequalities of size four and their copyings.
\begin{algorithm}	
	\begin{algorithmic}[1]
		\Require{$ (x, y, z) \in \R^{X \cup Y \cup E} $}
		\Ensure{Most violated cycle+copying inequality
			for each $ \j_1, \j_2 \in Y $ with $ \j_1 \neq \j_2 $.}
		\Function{Cycle+Copying-Separator}{$ (x, y, z) $}
			\For{each distinct $ \j_1, \j_2 \in Y$}
				\For{$ \I \in \Is $}
					\For{$ i \in \I $}
						\Lett{$ v^1_i $}{$ z_{i \j_1} - z_{i \j_2} $}
							{$ v^2_i $}{$ -x_{i} + z_{i \j_1} + z_{i \j_2} $}  
					\EndFor
					\Lett{$ S^1_{\I} $}{$ \set{i \in \I \colon v^1_i > 0} $}
						{$ S^2_{\I} $}{$ \set{i \in \I \colon v^2_i > 0} $} 
					\If{$ S^1_{\I} = \emptyset $} 
						\State $ p \leftarrow \argmax\set{v^1_i \mid i \in I} $
						\State $ S^1_{\I} \leftarrow \set{p} $
					\EndIf
					\If{$ S^2_{\I} = \emptyset $} 
						\State $ p \leftarrow
						\argmax\set{v^2_i \mid i \in I} $
						\State $ S^2_{\I} \leftarrow \set{p} $
					\EndIf			
					\Lett{$ \phi^1_{\I} $}{$ \sum_{i \in S^1_{\I}} v^1_i $}
						{$ \phi^2_{\I} $}{$ \sum_{i \in S^2_{\I}} v^2_i$ }
				\EndFor
				\State Select distinct $ \I_1, \I_2 \in \Is $
					which maximize $ \phi^1_{\I_1} + \phi^2_{\I_2} $
				\If{$ \phi^1_{\I_1} + \phi^2_{\I_2} - \y_{\j_1} > 0 $}
					\State \textbf{return} violated cycle inequality
						based on $ (\j_1, \j_2, I_1, I_2, S^1_{\I_1}, S^2_{\I_2}) $
				\EndIf
			\EndFor
		\EndFunction
	\end{algorithmic}
	\caption{Separation of cycle+copying inequalities}
	\label{alg:sep-cycle}
\end{algorithm}

A cycle+copying inequality is uniquely defined
by choosing distinct $ \j_1, \j_2 \in Y $
distinct $ I_1, I_2 \in \Is $,
an $ S^1 \subseteq I_1 $ and an $ S^2 \subseteq I_2 $.
Thus, there are exponentially many cycle inequalities with copying.
However, once $ j_1, j_2 $ are fixed (step~2),
one can calculate for each $ i \in I $ with $ \I \in \Is $ separately
the contribution to the left-hand side of the constraint
for the two cases $ i \in S^1 $ and $ i \in S^2 $ (steps~3--6).
Then one can select greedily the optimal subsets $ S^1_{I_1} $ and $ S^2_{I_1} $,
one for each $ \I \in \Is $ (steps~7--9)
and choose the two subsets which lead to the maximal combined violation (step~12).
In order to just separate over cycles with size four (without copying),
step~7 in the algorithm needs to be left out and we choose the then-branch in steps 8 and 9;
its complexity, however, does not decrease. 
The running time of the algorithm
is of the order $ \OO(\card{Y}^2 \card{X}) $,
including the greedy suboptimization.

\Cref{alg:sep-cycle} is written to separate over cycle inequalities
of the form~\cref{eq:cycle_general_class_short_copy},
but can easily be extended to separate over their switchings as well.
Note that such a switching is simply a lower bound instead of an upper bound
on the same left-hand-side expression.
By swapping all inequality and optimization senses in steps~7--13,
one can obtain the most violated switched cycle inequality simultaneously.

In \cref{sep:gen-lifted},
we give a template for the separation of $0$-lifted valid inequalities for $ BQP $
with bounded $Y$-support which includes \cref{alg:sep-cycle} as a special case.

\subsection{Separation of arrow inequalities}

In \cref{alg:sep-no1x}, we present a routine
to separate over all \arrowone{} inequalities.
It is again based on an enumeration part
and an auxiliary subproblem in each iteration,
which takes the form of a minimum-cost-circulation problem (MCCP) in this case.

For distinct $ \I_1, \I_2 \in \Is $ as well as some $ i_1 \in \I_1 $ and $ \j_1 \in Y$,
let $ J \coloneqq Y \setminus \set{j_1} $, let $ W \coloneqq I_2 \times J $,
and let $ H = (\tilde{V}, A) $ be a directed graph
with node set $ \tilde{V} \coloneqq \set{s} \cup \I_2 \cup J $.
The arc set~$A$ contains the arcs $ (s, i) $
for $ i \in \I_2 $, all $ (i, j) \in W $
and $ (j, s) $ for $ j \in J $.
Each arc has a capacity of at most one.
The graph~$H$ is shown in \cref{fig:arrow1-graph}.
\begin{figure}[H]
	\null\hfill\null
	\begin{subfigure}[c]{0.4\textwidth}
		\begin{tikzpicture}
			\node[shape=circle,draw=black,fill=lightmauve] (Y) at (-1.4,1) {$s$};
			\node[shape=circle,draw=black,fill=orange!30] (A) at (0,0) {$i^2_{\abs{I_2}}$};
			\node[shape=circle,draw=black,fill=orange!30] (B) at (0,1) {$\vdots$};
			\node[shape=circle,draw=black,fill=orange!30] (C) at (0,2) {$i^2_1$};
			\node[shape=circle,draw=black,fill=red!30] (D) at (1.8,0) {$j_{\abs{Y}}$};
			\node[shape=circle,draw=black,fill=red!30] (E) at (1.8,1) {$\vdots$};
			\node[shape=circle,draw=black,fill=red!30] (F) at (1.8,2) {$j_2$} ;
			\node[shape=circle,draw=black, fill=lightmauve] (Z) at (3.2,1) {$s$};
			
			\path [line] (C) -- node [midway,above=0.6em ] {$z_{i\j}+x_{i_1}$} (F);
			
			\path [line] (Y) -- node [near start, above=1em ] {$-z_{i\j_1}$} (C);
			
			\path [line] (F) -- node [near end, above=0.5em ] {$-z_{i_1\j}$} (Z);
			
			\path [line] (Y) -- (A);
			\path [line] (Y) -- (B);
			\path [line] (Y) -- (C);
			
			\path [line] (A) -- (D);
			\path [line] (A) -- (E);
			\path [line] (A) -- (F);
			
			\path [line] (B) -- (D);
			\path [line] (B) -- (E);
			\path [line] (B) -- (F);
			
			\path [line] (C) -- (D);
			\path [line] (C) -- (E);
			\path [line] (C) -- (F);
			
			\path [line] (E) -- (Z);
			\path [line] (D) -- (Z);
		\end{tikzpicture}
	\end{subfigure}
	\null\hfill\null
	\begin{subfigure}[c]{0.55\textwidth}
		\begin{equation*}
			c_a \coloneqq \begin{cases}
			-z_{i \j_1} & \text{if } a = (s, i) \text{ for } i \in \I_2\\
			z_{i \j} + x_{i_1} & \text{if } a = (i, j) \in W\\
			-z_{i_1 \j} & \text{if } a = (j, s) \text{ for } j \in J.
			\end{cases}
		\label{equ:c_arrow1}
		\end{equation*}
	\end{subfigure}
	\null\hfill\null
	\caption{Graph~$H$
		for $ I_2 = \set{i^2_1, \ldots, i^2_{\abs{I_2}}} $
		and $ J = \set{j_2, \ldots, j_{\card{Y}}} $ (left).
		The node~$s$ is duplicated in the figure
		for better readability.
		The unit costs for each arc (right).}
	\label{fig:arrow1-graph}
\end{figure}
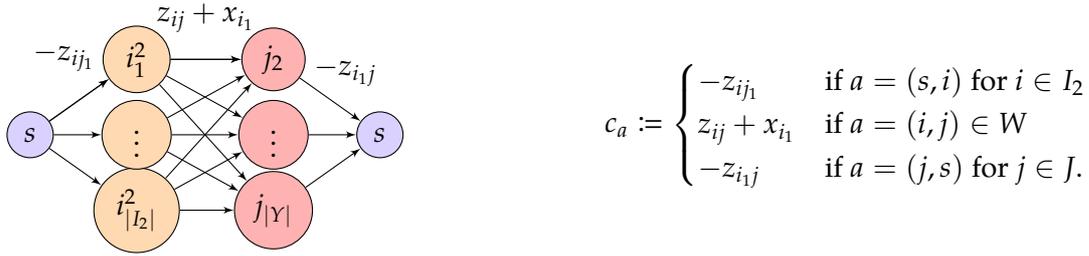
\begin{algorithm}
	\begin{algorithmic}[1]
		\Require{$ (x, y, z) \in \R^{X \cup Y \cup E} $}
		\Ensure{Most violated \arrowone{} inequality	
			for distinct $ \I_1, \I_2 \in \Is $,
			$ i_1 \in \I_1 $ and $ \j_1 \in Y $.}
		\Function{\arrowone-Separator}{$ x, y, z $}
		\For{each distinct $ \I_1, \I_2 \in \Is $,
			each $ i_1 \in \I_1 $ and each $ \j_1 \in Y $}
			\State Compute minimum-cost circulation on $H$.
			\Let{$o$}{optimal value determined in step 3}
			\Let{$ \tilde{A} $}{$ \set{(i, j) \in W \mid (i,j)$ is part of the optimal circulation$}$}
			\Let{$ \set{j_2, \ldots j_m} $}{$ \set{j \in J \mid (i, j) \in \tilde{A}} $} \Comment{naming the elements}
			\For{$n \in \set{2, \ldots, m}$}
				\State Let $ i_n \in \set{i \in I_2 \mid (i, j_n) \in \tilde{A}} $ \Comment{set contains only one element}
			\EndFor
			\If{$ o + y_{j_1} - z_{i_1 \j_1} < 0 $}
				\State \textbf{return} violated inequality
					based on $ (\I_1, \I_2, i_1, \j_1, j_2, \ldots j_m, i_2, \ldots, i_m) $
			\EndIf
		\EndFor
		\EndFunction
	\end{algorithmic}
	\caption{Separation of \arrowone{} inequalities}
	\label{alg:sep-no1x}
\end{algorithm}

An \arrowone{} inequality is uniquely defined
by choosing distinct $ \I_1, \I_2 \in \Is $,
an $ i_1 \in \I_1 $ and a $ \j_1 \in Y $ on the one hand
as well as $ i_2, \ldots, i_m \in I_2 $ and $j_2, \ldots, j_m \in J$ on the other hand.
The former possibilities are enumerated in step~2 of \cref{alg:sep-no1x},
while the latter is determined via an MCCP
in steps~5--12 of the algorithm.

The running time of the algorithm
is $ \OO(\card{\Is}^2 \card{Y} I^{\max} D(\I^{\max}, \card{Y})) $,
where $I^{\max} \coloneqq \max \set{\card{I} \mid I \in \Is}$ is the cardinality of the largest subset in $\Is$ and $ D(\I^{\max}, \card{Y}) $ is the running time of the algorithm
to solve the MCCP on~$H$.
Via the push-relabel algorithm, this is possible, for example,
in $ \OO(\card{\tilde{V}}^2 \sqrt{\card{A}}) $ (see \cite{cheriyan1989analysis}),
where $ \card{\tilde{V}} = \I^{\max} + \card{\Y} + 1 $
and $ \card{A} = \card{\I^{\max}} \card{\Y} + \card{\I^{\max}} + \card{\Y} $.
Observe that we did not have to specify~$m$ in \cref{alg:sep-no1x},
but that it is determined by the amount of the flow through node~$s$.

The \arrowtwo{} inequalities can be separated
via a slight modification of the algorithm
by changing the cost function for the MCCP
to reflect the support of the \arrowtwo{} inequalities.
Finally, how to separate over the switchings and copyings
of both types of arrow inequalities is described
\cref{sec:arrow1-switching} and \cref{sec:ip-separierer} respectively.

\section{Computational results}
\label{sec:comp-res}

In the following, we evaluate the computational benefit
of our polyhedral findings and separation routines for $ \zP(G, \Is) $.
We first present an empirical study
of the strength of the facets classes we have derived.
Then we show that the resulting cutting planes
lead to vastly improved performance
in solving a novel type of pooling problem
for instances based on real-world data.

For the computations, we used a server
with Intel Xeon E5-2690v2 3.00~GHz processors, 128 GB RAM, 1~core
and \emph{Gurobi~9.0.2} (\cite{gurobi}) as a quadratic solver.

\subsection{Strength of relaxation of the facet classes}
\label{sec:res-random}

We consider the optimization problem $ \max\set{c^T(x, y, z) \mid
	(x, y, z) \in P(G, \Is)} $
for some $ c \in \R^{X \cup Y \cup E} $.
The standard LP relaxation of this problem,
which state-of-art solvers like \emph{Gurobi} use, is given by 
\begin{equation}
	\max\SSet{c^Tx}{
		\begin{split}
			& z_{ij} \geq 0\quad \forall \set{i, j} \in E,
				\quad x_i + y_j - z_{ij} \leq 1\quad \forall \set{i, j} \in E,\\
			& z_{ij} \leq x_i\quad \forall i \in X,
				\quad z_{ij} \leq y_j\quad \forall j \in Y,\quad x \in X^{\Is}
		\end{split}
	}.
	\label{Eq:LP_Relaxation}
\end{equation}
Apart from the multiple-choice constraints,
this relaxation includes the famous \emph{McCormick}-inequalities,
which are a weaker form of the RLT inequalities we derived.
In this computational study, we investigate for several of the facet classes
presented in \cref{sec:fac-def-in} by how much they close the gap
between integer and LP optimum.
To this end, we generated~7 differently sized complete bipartite graphs~$G$
for which we solve the problem using 10~different random objective functions~$c$ each.
The coefficients of~$c$ are sampled uniformly and independently
from the interval $ [-10, 10] $.
As most of the derived facet classes are exponential in size,
we separated them using the routines from \cref{sec:sep}
until no more violated inequalities were found.
Only for the RLT inequalities, we directly added all of them from the start. 

In \cref{tb:random_rel_reduction}, we state the resulting gaps
averaged over all 10~instances with the same underlying graph.
The first column displays the short names of the different classes:
LP for the LP relaxation with no additional cuts,
\emph{RLT} for the RLT inequalities,
\emph{C}~for the cycle inequalities,
\emph{A1} for the \arrowone{} inequalities
and \emph{A2} for the \arrowtwo{} inequalities.
An additional \emph{S} at the end stands for all inequalities that are obtained
via the switching operation, and the same holds for~\emph{C} with the copy operation.
For the row \emph{All}, we first added all RLT inequalities
and then ran the separation routines for CC, A1S, A1C, A2S and A2C
from \cref{sec:sep} in an iterative loop,
inserting all violated inequalities of each class
before resolving the relaxation
until no more violated inequalities of any class were found.  
The top line in each column indicates the size of the instances.
These are stated in the format $ \card{\Is} $-$ \card{I} $-$ \card{Y} $
with all $ I \in \Is $ being of the same cardinality.
For example, \emph{$5$-$5$-$10$} means $5$ subsets in the partition
with $5$~nodes each and $10$~nodes in~$Y$.
The instances of type \emph{$10$-$*$-$25$} have a partition with $10$~subsets
of sizes $ 1, \ldots, 10 $ and $25$~nodes in~$Y$.
\begin{table}[h]
	\centering
	\caption{Average gaps in \% between integer and LP optimum
		for instances of different size and for various facet classes
		added to the linear relaxation}
	\label{tb:random_rel_reduction}
	\begin{tabular}{ l|rrr|rrr|r }
		\toprule
		& $5$-$5$-$10$ & $10$-$10$-$10$ & $15$-$15$-$10$
		& $5$-$5$-$20$ & $5$-$5$-$40$ & $5$-$5$-$60$ & $10$-$*$-$25$\\
		\midrule
		LP	& 17.83 & 21.81 & 22.47 & 29.03 & 37.85 & 42.28 & 40.66\\
		\midrule
		RLT	& 8.02 & 13.39 & 14.01 & 20.90 & 31.66 & 36.57 & 31.88\\
		C	& 3.63 & 11.90 & 14.30 & 17.05 & 30.51 & 36.08 & 28.82\\
		CC	& 0.00 & 0.00 & 0.00 & 0.00 & 1.99 & 4.43 & 4.05\\
		A1	& 1.95 & 7.55 & 8.97 & 10.65 & 24.71 & 30.34 & 21.18\\
		A1S	& 0.00 & 0.13 & 0.42 & 3.64 & 17.31 & 23.47 & 11.73\\
		A1C	& 0.28 & 1.85 & 2.82 & 6.41 & 18.41 & 23.58 & 15.50\\
		A2	& 9.52 & 16.85 & 18.14 & 22.82 & 34.12 & 38.87 & 33.51\\
		A2S	& 3.00 & 11.66 & 14.30 & 17.05 & 30.51 & 36.08 & 27.23\\
		A2C	& 0.31 & 2.74 & 4.13 & 7.40 & 19.78 & 24.88 & 17.20\\
		All	& 0.00 & 0.00 & 0.00 & 0.00 & 0.45 & 1.48 & 1.64\\
		\bottomrule
	\end{tabular}
\end{table}

We see that the CC inequalities have by far
the highest impact in reducing the gap.
On the first three instance classes,
they are the only group that is able to completely close the gap.
Furthermore, they are considerably stronger than the cycle inequalities without copying,
which is notable because there is practically no additional effort
in separating over the copyings as well.
We remark that on the larger instances,
we found about three times as many violated CC inequalities as C inequalities.
Note that C lies in the intersection of the classes A1S and A2S. 
While the A2S inequalities are empirically only marginally stronger
than the C inequalities,
the A1 inequalities perform much better than both other classes.
This performance can be improved significantly by passing over
to the larger set of the A1S inequalities.
For the smaller instances, they are almost as strong as the CC inequalities;
however, for large instances the latter are far superior.
The RLT inequalities contribute notably to closing the gap for small instances
and are still helpful for large instances,
especially because they achieve this effect
with relatively few cutting planes to be added.
When we look at the strength of all inequalities taken together,
we see that even for the largest instances
there only remains a very small integrality gap.
This means that the facet classes studied here
already describe the polytope under consideration very well.

Summarizing, we conclude that the CC inequalities
are the most important ones to separate,
and, favourably, also the computationally easiest ones to separate.
For larger instances, the other inequalities further help to reduce the gap.
Indeed, in \cref{sec:further-study}
we show that the CC and the A2C inequalities together
are the best combination of any two types of cutting planes in our study.
In \cref{sec:num-cutting-planes}, we also show in detail
how many cutting plans are required
to obtain the results from \cref{tb:random_rel_reduction}.

\subsection{Computational study on real-world pooling instances}
\label{comp:pooling}

We finally present a computational study
on real-world instances of a special variant of the pooling problem,
where we use some of the derived cuts
to improve the solution process.
First, we describe the underlying pooling model,
then we give some details about the origin of the data.
The results on our benchmark set will show
that exploiting the multiple-choice structure in the problem
yields a significant solution time benefit.

\subsubsection{Pooling model}

The pooling problem is a well-known continuous and non-convex optimization problem
(see \cite{gupte2017relaxations} for a detailed introduction). 
From a practical point of view,
it arises whenever different raw materials of varying quality
need to be mixed over multiple production stages
in order to obtain final products with given quality requirements.
In the mathematical sense, it is a combination of the two famous problems
minimum-cost flow and blending.

Typically, the pooling problem is represented
over a directed graph $ G = (V, A) $ 
with a node set $ V = I \cup P \cup O $
which is split into a set of inputs~$I$, pools~$P$ and outputs~$O$
as well as a set of quality specifications~$S$. 
We assume that there exist no arcs between pools,
which is the standard setting studied in the literature.
For each $ i \in I $ and $ s \in S $,
$ \lambda_{is} \in \R_+ $ denotes the value of specification~$s$ of input~$i$. 
Further, for each output $ o \in O $ and each specification $ s \in S $,
a minimum level $ \mu^{\min}_{os} \in \R_+ $
and a maximum level $ \mu^{\max}_{os} \in \R_+ $ are given.
The maximum available amount of raw material
at input~$ i \in I $ is given by $ b_i \in \R_+ $,
and for $ o \in O$, $ d_o \in \R_+ $ is the demand at output~$o$.
For each pool~$ l \in L $, $ I_l \subseteq I $ shall denote
the subset of inputs which have an arc pointing to~$l$.
The goal of pooling is then to send as much flow as possible
from inputs to outputs through the graph
while ensuring that the specification limits hold.

Up to this point, we have described 
the classical pooling problem from the literature.
In our application, there are additional flow restrictions
in the form of multiple-choice constraints,
which correspond to the \emph{recipes} used in tea production.
They specify percentagewise how much of each raw material
is needed to produce a given final product.
This requirement leads to the following definitions:
for each pool~$ l \in L $, let $ \Is_l \coloneqq \set{I^1_l, \ldots, I^{r_l}_l} $
be a partition of $ I_l $ with $ r_l $-many subsets,
where each subset contains only inputs of the same material.
Moreover, we are given a factor $ 0 \leq \sigma^h_l \leq 1 $
for each $ l \in L $ and $ h = 1, \ldots, r_l $
stating the desired proportion of total flow
from the associated inputs in $ \Is_l $ arriving at~$l$.
These factors satisfy $ \sum_{h \in [r_l]} \sigma^h_l = 1 $ for all $ l \in L $.
We now introduce the two model formulations which
we will compare in our study.

\paragraph{$q$-Formulation for the pooling problem}

A well-known formulation for the pooling problem
is the so-called \emph{$q$-formulation}
(see \cite{gupte2017relaxations}),
which can easily be extended
for the additional multiple-choice constraints.
Let variable $ y_{ij} \in \R_+ $ be the flow on arc $ a = (i, j) \in A $,
and let variable $ q_{il} \in \R_+ $ be the proportion of flow
from input $ i \in I_l $ to pool $ l \in L $.
For notational simplicity, we will use the flow variables~$ y_{ij} $
with the understanding that $ y_{ij} \equiv 0 $ if $ (i, j) \notin A $.
Furthermore, we define the auxiliary variables $ v_{ilo} \in \R_+ $
for $ i \in I_l $, $ l \in L $ and $ o \in O $
for the product of $ y_{ij} $ and $ q_{il} $.
The pooling problem can then be modelled
via the following constraints:
\begin{alignat}{1}
	y_{il} & \leq b_i \quad \forall i \in I, \forall l \in L,
		\quad y_{lo} \leq d_o \quad \forall o \in O, \forall l \in L,\label{pool:1}\\
	\sum_{i \in I} y_{il} &= \sum_{o \in O} y_{lo}
		\quad \forall l \in L,\label{pool:2}\\
	\sum_{i \in I^h_l} y_{il} &= \sigma_h \sum_{o \in O} y_{lo}
		\quad \forall l \in L, \forall h = 1, \ldots, r_l,\label{pool:3}\\
	\sum_{i \in I_l} q_{il} &= 1 \quad \forall l \in L,\label{pool:4}\\
	v_{ilo} &= q_{il} y_{lo}
		\quad \forall l \in L, \forall i \in I_l, \forall o \in O,\label{pool:5}\\
	y_{il} &= \sum_{o \in O} v_{ilo}
		\quad \forall l \in L, \forall i \in I_l, \label{pool:6}\\
	\mu^{\min}_{os} \sum_{i \in I \cup L} y_{io}
		& \leq \sum_{i \in I} \lambda_{is} y_{io}
			+ \sum_{l \in L} \sum_{i \in I_l} \lambda_{is} v_{ilo}
			\leq \mu^{\max}_{os} \sum_{i \in I \cup L} y_{io}
			\quad \forall o \in O, \forall s \in S.\label{pool:7}
\end{alignat}
Inequalities~\cref{pool:1} specify the upper bounds on the flow variables,
and equations~\cref{pool:2} model the flow conservation.
The novel recipe structure is modelled via~\cref{pool:3}.
Inequalities~\cref{pool:4} require
that the proportion variables at each pool sum up to one.
Constraint~\cref{pool:5} defines the auxiliary variables
to linearize the bilinear terms
while constraint~\cref{pool:6} ensures consistency
between the $q$ and $y$-variables.
Finally, \cref{pool:7} demands
that the specification bounds at the outputs be respected.

The aim is to maximize the total flow arriving at the outputs,
which is realized via the objective function $ \max \sum_{l \in L} \sum_{o \in O} y_{lo} $.

\paragraph{$q$+cuts formulation for the pooling problem}

The equations given by
\begin{equation}
	\sum_{i \in I^h_l} q_{il} = \sigma_h \quad \forall l \in L, \forall h = 1, \ldots, r_l
	\label{equ:qrecipe1}
\end{equation}
are valid for the pooling problem~\cref{pool:1}--\cref{pool:7}, 
which can easily be checked
by rearrangement of \cref{pool:3}, \cref{pool:5} and \cref{pool:6}.
Observe that for each pool $ l \in L $
as well as the corresponding subvectors $ q_l $, $ y_l $ and $ v_l $
of $q$, $y$ and $v$, the set
\begin{equation}
	\set{(q_l, y_l, v_l) \in \R_+^{I_l \times O \times (I_l \times O)} \mid
		(q_l, y_l, v_l) \text{ fulfils } \cref{pool:1}, \cref{pool:5}
			\text{ and } \cref{equ:qrecipe1}}
	\label{Eq:qyv-P}
\end{equation}
is a scaled and low-dimensional version
of the boolean quadric polytope with multiple-choice constraints~$P$
on a complete bipartite graph.
The low-dimensionality is due to~\cref{equ:qrecipe1}
being an equation instead of an inequality.

In \cref{sec:traf_low_dim},
we derive the following valid equations for \cref{Eq:qyv-P},
which we call the RLT equations:
\begin{equation}
	\sum_{i \in I^h_l} v_{ilo} = \sigma_h y_{lo}
		\quad \forall o \in O, l \in L, \forall h = 1, \ldots, r_l.
\label{equ:rlt-low-dim2}
\end{equation}
There we also show that valid constraints can easily be converted
between the full-dimensional case and the low-dimensional case.

Note that for each pool $ l \in L $, the set
\begin{equation}
	\set{(q_l, y_l, v_l) \in \R_+^{I_l \times O \times (I_l \times O)} \mid
		(q_l, y_l, v_l) \text{ fulfils } \cref{pool:1}, \cref{pool:4}
			\text{ and } \cref{pool:5}}
	\label{Eq:qyv-pq}
\end{equation}
also defines a low-dimensional instance of~$P$.
In this special case, where there is only a single multiple-choice constraint,
the complete description for \cref{Eq:qyv-pq}
is given by the corresponding RLT inequalities
(see \cite[Theorem~1]{gupteworking} as well as \cref{cor:compl_onesubset}).
Adding these RLT inequalities to \cref{pool:1}--\cref{pool:7}
leads to the well-known \emph{$ pq $-formulation} for the pooling problem
(see \cite{gupte2017relaxations}).

In our second formulation for the pooling problem with recipes,
which we call \emph{$q$+cuts},
we add to the $q$-formulation \cref{pool:1}--\cref{pool:7}
the equations~\cref{equ:qrecipe1},
the RLT equations~\cref{equ:rlt-low-dim2}
and the RLT inequalities for \cref{Eq:qyv-P} for each pool.
The $q$+cuts formulation can be seen as a $ pq $-formulation for pooling with recipes,
in the sense described above.
We will demonstrate that it is vastly superior
on our real-world instances when compared to the pure $q$-formulation.

\subsubsection{Computational results}

Based on real-world data from a tea producing company,
we have created six instances of different sizes for
the pooling problem with recipes introduced above.
The amount of available raw material,
the quality specifications at the inputs
and typical levels for the upper and lower specification bounds at the outputs
as well as the recipes were provided by our industry partner.
Under this setting, we have simulated various possible customer request scenarios,
for which we took the following assumptions.
Each pool produces one final product,
which is then sent to $5$~different outputs (customers)
with different specification limits and demands.
From the provided reference quality levels,
we derived $5$ different sets of quality specifications
for the different outputs by randomly decreasing the given lower bound
and increasing the upper bound by up to 15\% each.
The demands were drawn uniformly and independently from the interval $ [200, 4000] $,
which yields typical demand values for this application.
On average, the inputs had $6$ measured quality specifications each
while the outputs had an average of $190$ specifications each.

In our experiments, we compared the performance
of the $q$-formulation versus the $q$+cuts formulation
within a time limit of 1~minute.
The results are found in \cref{tb:comp_study}.
\begin{table}[t]
	\null\hfill\null
	\caption{Performance comparison for the $q$- and $q$+cuts formulation
		over 6 instances of different size:
		either solution time in seconds or optimality gap in percent
		after after a time limit of 1~minute (left).
		Number of rows, columns, bilinear terms (BLT) and non-zeros (NZ)
		of instance~6 after presolve in multiples of one thousand (right).}
	\label{tb:comp_study}
	\vspace{1.25\baselineskip}
	\begin{minipage}[b]{0.65\textwidth}
		\centering
		\begin{tabular}{ rrrr|rr }
			\toprule
			Instance & $ \card{I} $ & $ \card{P} $ & $ \card{O} $ & $q$ &  $q$+cuts\\
			\midrule
			1 & 187 & 34 & 170 & 18.59\% & 0.42 s\\
			2 & 229 & 43 & 215 & 3.91 s & 0.71 s\\
			3 & 306 & 82 & 410 & 38.69\% & 3.40 s\\
			4 & 329 & 117 & 585 & 115.09\% & 0.14\%\\
			5 & 360 & 160 & 800 & 69.68\% & 0.40\%\\
			6 & 465 & 230 & \numprint{1150} & 61.42\% & 0.36\%\\
			\bottomrule
		\end{tabular}
    \end{minipage}
    \null\hfill\null
    \begin{minipage}[b]{0.33\textwidth}
    	\centering
		\begin{tabular}{ lrr }
			\toprule
					& $q$ & $q$+cuts\\
			\midrule
			\#Rows	& 57 & 59\\
			\#Cols	& 19 & 17\\
			\#BLT	& 12 & 11\\			
			\#NZ	& 157 & 164\\
			\bottomrule
		\end{tabular}
	\end{minipage}    
    \null\hfill\null
\end{table}
We see that the $q$-formulation
could solve only one out of the six instances
within the time limit, while the other instances
still have huge optimality gaps.
In contrast, the $q$+cuts formulation solves the first three instances
to optimality within seconds.
The gaps of the other three instances are almost closed after 1~minute.
To provide some more context, we have included the sizes
of the two formulations for the largest instance~6 after Gurobi's presolve.
Both formulations could be reduced to similar sizes,
where, however, the $q$+cuts formulation has 10\% less bilinear terms,
which is an important factor influencing solution time.
Furthermore, we observed that $q$+cuts found good primal solutions much faster;
we assume that Gurobi's heuristics were able to benefit
from the tighter relaxation it provides.
Altogether, this shows that the $q$+cuts formulation
is vastly superior to the $q$-formulation on our test set.

\section{Conclusions}
\label{sec:conclusion}

We have seen that the joint consideration
of separable bilinear terms and multiple-choice constraints
leads to a very rich combinatorial structure,
whose exploitation is also beneficial from a computational point of view.
Many symmetries of the bipartite boolean quadric polytope remain intact,
which holds for the $0$-lifted version of many facet-defining inequalities as well.
Several subcases even allow for a characterization
of the complete convex hull via reformulation-linearization inequalities.
At the same time, there are very interesting new symmetries and facet classes
which arise specifically due to the additional multiple-choice structure.
Notably, the switching operation and the novel copy operation
provide a lifting framework which is able to produce
a vast amount of facets out of basic facet classes.
Moreover, we gave separation routines for these facet classes,
most of which run in polynomial time under the assumption of bounded support.
All of these procedures allow for efficient implementation,
and the corresponding cutting planes
lead to an almost complete closure of the integrality gap
on instances with up to $85$~nodes in our experiments.
Finally, we demonstrated that the bipartite boolean quadric polytope
with multiple-choice constraints is an adequate model
for pooling problems with fixed input proportions (\ie recipes) at the pools.
Putting our insights into practice
allowed us to solve very hard real-world instances
to (near-)optimality within a minute,
to which a standard solver did not even come close in most cases.

\section*{Acknowledgements}
\label{sec:acknowledgements}

We thank Francisco Javier Zaragoza Mart\'{\i}nez for our fruitful discussions
on the topic as well as Mark Zuckerberg and Thomas Kalinowski
for clarifying details about Zuckerberg's technique for convex-hull proofs.
Futhermore, we acknowledge financial support
by the Bavarian Minis\-try of Economic Affairs, Regional Development and Energy
through the Center for Analytics -- Data -- Applications (ADA-Center)
within the framework of \qm{BAYERN DIGITAL~II}.

\bibliographystyle{alpha}
\bibliography{Literature}

\newcommand{\etalchar}[1]{$^{#1}$}
\begin{thebibliography}{GKRW20}

\bibitem[ABH{\etalchar{+}}04]{audet2004pooling}
Charles Audet, Jack Brimberg, Pierre Hansen, S{\'e}bastien~Le Digabel, and
  Nenad Mladenovi{\'c}.
\newblock Pooling problem: Alternate formulations and solution methods.
\newblock {\em Management science}, 50(6):761--776, 2004.

\bibitem[AI07]{avis2007new}
David Avis and Tsuyoshi Ito.
\newblock New classes of facets of the cut polytope and tightness of $ i_{mm22}
  $ {B}ell inequalities.
\newblock {\em Discrete Applied Mathematics}, 155(13):1689--1699, 2007.

\bibitem[AII08]{avis2008generating}
David Avis, Hiroshi Imai, and Tsuyoshi Ito.
\newblock Generating facets for the cut polytope of a graph by triangular
  elimination.
\newblock {\em Mathematical programming}, 112(2):303--325, 2008.

\bibitem[AIIS05]{avis2005two}
David Avis, Hiroshi Imai, Tsuyoshi Ito, and Yuuya Sasaki.
\newblock Two-party {B}ell inequalities derived from combinatorics via
  triangular elimination.
\newblock {\em Journal of Physics A: Mathematical and General},
  38(50):10971--10987, 2005.

\bibitem[BDK{\etalchar{+}}17]{BolandDeyKalinowskiMolinaroRigterink2015}
Natashia Boland, Santuna~S. Dey, Thomas Kalinowski, Marco Molinaro, and Fabian
  Rigterink.
\newblock Bounding the gap between the {M}c{C}ormick relaxation and the convex
  hull for bilinear functions.
\newblock {\em Mathematical Programming, Series A}, 162:523--535, 2017.

\bibitem[BGM20]{cycle-free2020}
Andreas Bärmann, Patrick Gemander, and Maximilian Merkert.
\newblock The clique problem with multiple-choice constraints under a
  cycle-free dependency graph.
\newblock {\em Discrete Applied Mathematics}, 283:59--77, 2020.

\bibitem[BGMS18]{staircase2018}
Andreas Bärmann, Thorsten Gellermann, Maximilian Merkert, and Oskar Schneider.
\newblock Staircase compatibility and its applications in scheduling and
  piecewise linearization.
\newblock {\em Discrete Optimization}, 29:111--132, 2018.

\bibitem[BH93]{boros1993cut}
Endre Boros and Peter~L Hammer.
\newblock Cut-polytopes, boolean quadric polytopes and nonnegative quadratic
  pseudo-boolean functions.
\newblock {\em Mathematics of Operations Research}, 18(1):245--253, 1993.

\bibitem[BM86]{barahona1986cut}
Francisco Barahona and Ali~Ridha Mahjoub.
\newblock On the cut polytope.
\newblock {\em Mathematical programming}, 36(2):157--173, 1986.

\bibitem[BMS20]{benders2020}
Andreas Bärmann, Alexander Martin, and Oskar Schneider.
\newblock Efficient formulations and decomposition approaches for power peak
  reduction in railway traffic via timetabling.
\newblock {\em Transportation Science}, 2020.
\newblock To appear.

\bibitem[BZ04]{bienstock2004subset}
Daniel Bienstock and Mark Zuckerberg.
\newblock Subset algebra lift operators for 0-1 integer programming.
\newblock {\em SIAM Journal on Optimization}, 15(1):63--95, 2004.

\bibitem[Cas15]{Castro2015}
Pedro~M. Castro.
\newblock Tightening piecewise {M}c{C}ormick relaxations for bilinear problems.
\newblock {\em Computers \& Chemical Engineering}, 72:300--311, 2015.

\bibitem[CG04]{collins2004relevant}
Daniel Collins and Nicolas Gisin.
\newblock A relevant two qubit {B}ell inequality inequivalent to the {CHSH}
  inequality.
\newblock {\em Journal of Physics A: Mathematical and General},
  37(5):1775--1787, 2004.

\bibitem[CM89]{cheriyan1989analysis}
Joseph Cheriyan and SN~Maheshwari.
\newblock Analysis of preflow push algorithms for maximum network flow.
\newblock {\em SIAM Journal on Computing}, 18(6):1057--1086, 1989.

\bibitem[CSPB17]{CusticSokolPunnenBhattacharya2017}
Ante \'{C}usti\'{c}, Vladyslav Sokol, Abraham~P. Punnen, and Binay
  Bhattacharya.
\newblock The bilinear assignment problem: Complexity and polynomially solvable
  special cases.
\newblock {\em Mathematical Programming, Series A}, 166:185--205, 2017.

\bibitem[DLL11]{d2011valid}
Claudia D’Ambrosio, Jeff Linderoth, and James Luedtke.
\newblock Valid inequalities for the pooling problem with binary variables.
\newblock In {\em International Conference on Integer Programming and
  Combinatorial Optimization}, pages 117--129. Springer, 2011.

\bibitem[DLW97]{deza1997geometry}
MM~Deza, Monique Laurent, and R~Weismantel.
\newblock Geometry of cuts and metrics.
\newblock {\em Mathematical Methods of Operations Research-ZOR},
  46(3):282--283, 1997.

\bibitem[DS90]{de1990cut}
Caterina De~Simone.
\newblock The cut polytope and the boolean quadric polytope.
\newblock {\em Discrete Mathematics}, 79(1):71--75, 1990.

\bibitem[FL18]{fampa2018efficient}
Marcia Fampa and Jon Lee.
\newblock Efficient treatment of bilinear forms in global optimization.
\newblock \url{https://arxiv.org/abs/1803.07625}, 2018.

\bibitem[FT05]{faye2005polyhedral}
Alain Faye and Quoc-an Trinh.
\newblock A polyhedral approach for a constrained quadratic 0--1 problem.
\newblock {\em Discrete Applied Mathematics}, 149(1-3):87--100, 2005.

\bibitem[GACD13]{gupte2013solving}
Akshay Gupte, Shabbir Ahmed, Myun~Seok Cheon, and Santanu Dey.
\newblock Solving mixed integer bilinear problems using {MILP} formulations.
\newblock {\em SIAM Journal on Optimization}, 23(2):721--744, 2013.

\bibitem[GADC17]{gupte2017relaxations}
Akshay Gupte, Shabbir Ahmed, Santanu~S Dey, and Myun~Seok Cheon.
\newblock Relaxations and discretizations for the pooling problem.
\newblock {\em Journal of Global Optimization}, 67(3):631--669, 2017.

\bibitem[GGL19]{galliseparable}
Laura Galli, Akshay Gupte, and Adam~N Letchford.
\newblock Separable bilinear programs, facial disjunctions and the
  reformulation-linearization technique.
\newblock
  \url{https://pdfs.semanticscholar.org/8adc/5b08fdfbb4f2fdfa1c7584ff78810aeb3567.pdf},
  2019.

\bibitem[GJ00]{polymake:2000}
Ewgenij Gawrilow and Michael Joswig.
\newblock {\tt polymake}: a framework for analyzing convex polytopes.
\newblock In {\em Polytopes---combinatorics and computation ({O}berwolfach,
  1997)}, volume~29 of {\em DMV Sem.}, pages 43--73. Birkh\"auser, Basel, 2000.

\bibitem[GKRW20]{gupte2020extended}
Akshay Gupte, Thomas Kalinowski, Fabian Rigterink, and Hamish Waterer.
\newblock Extended formulations for convex hulls of some bilinear functions.
\newblock {\em Discrete Optimization}, 36:100569, 2020.

\bibitem[GLL12]{gunluk2012polytope}
Oktay G{\"u}nl{\"u}k, Jon Lee, and Janny Leung.
\newblock A polytope for a product of real linear functions in 0/1 variables.
\newblock In {\em Mixed Integer Nonlinear Programming}, pages 513--529.
  Springer, 2012.

\bibitem[Gup16]{gupteworking}
Akshay Gupte.
\newblock A note on simplicial bilinear optimization.
\newblock \url{http://agupte.people.clemson.edu/BilinSimpl.pdf}, 2016.

\bibitem[{Gur}20]{gurobi}
{Gurobi Optimization, LLC}.
\newblock Gurobi optimizer reference manual.
\newblock \url{http://www.gurobi.com}, 2020.

\bibitem[HLL98]{HardinLeeLeung1998}
Jill Hardin, Jon Lee, and Janny Leung.
\newblock On the boolean-quadric packing uncapacitated facility location
  polytope.
\newblock {\em Annals of Operations Research}, 83:77--94, 1998.

\bibitem[KCG13]{KolodziejCastroGrossmann2013}
Scott Kolodziej, Pedro~M. Castro, and Ignacio~E. Grossmann.
\newblock Global optimization of bilinear programs with a multiparametric
  disaggregation technique.
\newblock {\em Journal of Global Optimization}, 57:1039--1063, 2013.

\bibitem[KPP04]{KellererPferschyPisinger2004}
Hans Kellerer, Urlrich Pferschy, and David Pisinger.
\newblock {\em Knapsack Problems}, chapter The Multiple-Choice Knapsack
  Problem, pages 317--347.
\newblock Springer, 2004.

\bibitem[LG17]{LimaGrossmann2017}
Ricardo~M. Lima and Ignacio~E. Grossmann.
\newblock On the solution of nonconvex cardinality boolean quadratic
  programming problems: A computational study.
\newblock {\em Computational Optimization and Applications}, 66(1):1--37, 2017.

\bibitem[LL04]{lee2004boolean}
Jon Lee and Janny Leung.
\newblock On the boolean quadric forest polytope.
\newblock {\em INFOR: Information Systems and Operational Research},
  42(2):125--141, 2004.

\bibitem[LM16]{liers2016structural}
Frauke Liers and Maximilian Merkert.
\newblock Structural investigation of piecewise linearized network flow
  problems.
\newblock {\em SIAM Journal on Optimization}, 26(4):2863--2886, 2016.

\bibitem[LS14]{letchford2014new}
Adam~N Letchford and Michael~M S{\o}rensen.
\newblock A new separation algorithm for the boolean quadric and cut polytopes.
\newblock {\em Discrete Optimization}, 14:61--71, 2014.

\bibitem[McC76]{mccormick1976computability}
Garth~P McCormick.
\newblock Computability of global solutions to factorable nonconvex programs:
  Part {I} -- convex underestimating problems.
\newblock {\em Mathematical programming}, 10(1):147--175, 1976.

\bibitem[Meh97]{mehrotra1997cardinality}
Anuj Mehrotra.
\newblock Cardinality constrained boolean quadratic polytope.
\newblock {\em Discrete Applied Mathematics}, 79(1-3):137--154, 1997.

\bibitem[MF09]{misener2009advances}
Ruth Misener and Christodoulos~A Floudas.
\newblock Advances for the pooling problem: modeling, global optimization, and
  computational studies.
\newblock {\em Appl. Comput. Math}, 8(1):3--22, 2009.

\bibitem[Nau87]{Nauss1978}
Robert~M. Nauss.
\newblock The 0-1 knapsack problem with multiple choice constraints.
\newblock {\em European Journal of Operational Research}, 2(2):125--131, 1987.

\bibitem[Pad89]{padberg1989boolean}
Manfred Padberg.
\newblock The boolean quadric polytope: some characteristics, facets and
  relatives.
\newblock {\em Mathematical programming}, 45(1-3):139--172, 1989.

\bibitem[Pen07]{pentico2007assignment}
David~W Pentico.
\newblock Assignment problems: A golden anniversary survey.
\newblock {\em European Journal of Operational Research}, 176(2):774--793,
  2007.

\bibitem[Pit91]{pitowsky1991correlation}
Itamar Pitowsky.
\newblock Correlation polytopes: their geometry and complexity.
\newblock {\em Mathematical Programming}, 50(1-3):395--414, 1991.

\bibitem[PSK13]{PunnenSripratakKarapetyan2013}
Abraham~P. Punnen, Piyashat Sripratak, and Daniel Karapetyan.
\newblock Domination analysis of algorithms for bipartite boolean quadratic
  programs.
\newblock In {\em International Symposium on Fundamentals of Computation Theory
  (FCT)}, pages 271--282, 2013.

\bibitem[PSK15]{PunnenSripratakKarapetyan2015}
Abraham~P. Punnen, Piyashat Sripratak, and Daniel Karapetyan.
\newblock The bipartite unconstrained 0-1 quadratic programming problem:
  Polynomially solvable cases.
\newblock {\em Discrete Applied Mathematics}, 193:1--10, 2015.

\bibitem[PW16]{PunnenWang2016}
Abraham~P. Punnen and Yang Wang.
\newblock The bipartite quadratic assignment problem and extensions.
\newblock {\em Discrete Optimization}, 250:715--725, 2016.

\bibitem[SA92]{sherali1992new}
Hanif~D Sherali and Amine Alameddine.
\newblock A new reformulation-linearization technique for bilinear programming
  problems.
\newblock {\em Journal of Global optimization}, 2(4):379--410, 1992.

\bibitem[SLA95]{sherali1995simultaneous}
Hanif~D Sherali, Youngho Lee, and Warren~P Adams.
\newblock A simultaneous lifting strategy for identifying new classes of facets
  for the boolean quadric polytope.
\newblock {\em Operations Research Letters}, 17(1):19--26, 1995.

\bibitem[SPS19]{sripratak16bipartite}
Piyashat Sripratak, Abraham~P. Punnen, and Tamon Stephen.
\newblock The bipartite boolean quadric polytope.
\newblock Technical report, Simon Fraser University, 2019.

\bibitem[Sri14]{sripratak2014bipartite}
Piyashat Sripratak.
\newblock {\em The bipartite boolean quadratic programming problem}.
\newblock PhD thesis, Science: Mathematics, 2014.

\bibitem[WW01]{werner2001bell}
Reinhard~F. Werner and Michael~M. Wolf.
\newblock Bell inequalities and entanglement.
\newblock \url{https://arxiv.org/abs/quant-ph/0107093}, 2001.

\bibitem[Zuc16]{zuckerberg2016geometric}
Mark Zuckerberg.
\newblock Geometric proofs for convex hull defining formulations.
\newblock {\em Operations Research Letters}, 44(5):625--629, 2016.

\end{thebibliography}

\appendix
\section*{Appendix}

Here we give some of the details
about the results omitted above.
In \cref{sec:arrow2-proof}, we prove \cref{arrow2-facets-2},
which states that the \arrowtwo{} inequalities define facets of $ \zP(G, \Is) $.
The separation algorithms for switchings and copyings
of the arrow-1 and arrow-2 inequalities are given
in \cref{sec:arrow1-switching} and \cref{sec:ip-separierer} respectively.
A separation template for general $0$-lifted valid inequalities
from BQP is presented in \cref{sep:gen-lifted}.
In \cref{sec:traf_low_dim}, we describe how to obtain valid inequalities
for the lower-dimensional version of $ \zP(G, \Is) $
where the multiple-choice constraints have to be fulfilled with equality.
To complement the computational experiments presented in \cref{sec:res-random},
we first investigate the number of cutting planes found per facet class
in \cref{sec:num-cutting-planes}.
Finally, in \cref{sec:further-study}
we examine which class of facet-defining inequalities is the second-strongest
after the cycle+copying inequalities empirically.

\section{Proof of \cref{arrow2-facets-2}}
\label{sec:arrow2-proof}

\begin{proof}
	We fix some distinct $ I_1, I_2 \in \Is $, a $ i_1 \in I_1 $,
	pairwise distinct $ i_2, \ldots, i_m \in I_2 $,
	pairwise distinct $ j_1, \ldots, j_m \in Y $
	and some $ m \geq 3 $.
	Let $ a^T(x, y, z) \leq b $ be a facet-defining inequality
	which contains the face~$F$ induced by \cref{equ:new_facet2}.
	In~\cref{Alg:Arrow-2_Facets}, we indicate how to construct
	$ \card{X} + \card{Y} + \card{E}$ affinely independent points on $F$	
	to show that $a$ and~$b$ are multiples of the coefficients of \cref{equ:new_facet2}.	
	\begin{algorithm}
		\begin{algorithmic}[1]
			\State From $ 0 \in F $ follows $ b = 0 $
			\State From $ \e_{j_1} \in F $ follows $ \a_1 = 0 $
			\State From $ \e_{i_1} + \e_{j_1} + \e_{i_1 j_1} \in F $
				follows $ \a_{i_1 j_1} = -\a_{i_1} $
			\State From $ \e_{i_p} \in F $ follows $ \a_{i_p} = 0 $
				for all $ p = 2 ,\ldots, m $
			\For{$ p = 2, \ldots, m $}
				\State From $ \e_{i_p} + \e_{j_p} + \e_{i_pj_p} \in F $
					follows $ \a_{i_p j_p} = -\a_{j_p} $
				\State From $ \e_{i_1} + \e_{j_1} + \e_{j_p}
						+ \e_{i_1j_1} + \e_{i_1j_p} \in F $
					follows $ \a_{i_1 j_p} = -\a_{j_p} $
				\State From $ \e_{i_1} + \e_{i_p}
						+ \e_{j_1} + \e_{j_p}
						+ \e_{i_1j_1} + \e_{i_1j_p} + \e_{i_pj_1} + \e_{i_pj_p} \in F $
					follows that $ \a_{i_pj_1} = \a_{j_p} $
				\State From $ \e_{i_1} + \e_{i_p}
						+ \e_{j_p} + \e_{i_1 j_p} + \e_{i_p j_p} \in F $
					follows that $ \a_{i_1} = \a_{j_p} $
				\State From $ \e_{i_p} + \e_{i_q}
						+ \e_{j_p} + \e_{i_p j_p} + \e_{i_q j_p} \in F $
					follows $ \a_{i_q j_p} = 0 $
					for all $ q \in \set{2, \ldots, m} \setminus p $
			\EndFor
			\State $ \hat{Y} \leftarrow Y \setminus \set{j_1, \ldots, j_m} $
				and $ \hat{I} \leftarrow \Is \setminus \set{I_1, I_2} $
			\State From $ \e_j \in F $ follows $ \a_j = 0 $ for all $ j \in \hat{Y} $
			\State From $ \e_i \in F $ follows $ \a_i = 0 $
				for all $ i \in I $ with $ I \in \hat{I} $
			\State From $ \e_{j_p} + \e_{i_p} + \e_i + \e_{ij_p} + \e_{i_pj_p} \in F $
				follows $ \a_{i j_p} = 0 $ for all $ i \in I $ with $ I \in \hat{I} $
				and $ p = 2,\ldots, m $
			\State From $ \e_j + \e_i + \e_{ij} \in F $
				follows $ \a_{i j_q} = 0 $ for all $ i \in I $ with $ I \in \hat{I} $
				and $ j \in \hat{Y} $
			\State From $ \e_{j_1} + \e_i + \e_{i j_1} \in F $
				follows $ \a_{ij_1} = 0 $ for all $ i \in I $ with $ I \in \hat{I} $
			\State From $ \e_{i_1} + \e_{j_1} + \e_{i_1j_1} + \e_j + \e_{i_h j} \in F $
				follows $ \a_{i_1 j} = 0 $ for all $ j \in \hat{Y} $
			\State From $ \e_{i_h} + \e_j + \e_{i_h j} \in F $
				follows $ \a_{i_h j} = 0 $ for all $ h = 2, \ldots, m $
				and $ j \in \hat{Y} $
		\end{algorithmic}
		\caption{Construction of affinely independent points on an \arrowtwo{} facet}
		\label{Alg:Arrow-2_Facets}
	\end{algorithm}
	
	Copying lifts further variables into the inequality.
	For each $ i \in I_1 \setminus \set{i_1} $, there are two possibilities:
	either all coefficients associated with $i$ (for the $x$- and $z$-variables) are~$0$,
	then similar steps as 12--19 need to be done,
	or they are identical to those of $ i_1 $,
	then the construction can be adapted by replacing~$i$ with~$ i_1 $
	in \cref{Alg:Arrow-2_Facets}.
	For $ i \in I_2 \setminus \set{i_2, \ldots i_m} $,
	there are $m$~possibilities:
	again, either all coefficients are $0$,
	or they coincide with those corresponding
	to some node from $ k \in \set{i_2, \ldots, i_m} $,
	in which case we can replace~$i$ with~$k$ in \cref{Alg:Arrow-2_Facets}.
	Therefore, the \arrowtwo{} inequalities and all their copyings define facets.
\end{proof}

\section{Separation of \arrowone{}+switching inequalities}
\label{sec:arrow1-switching}

In order to separate over all switchings of an \arrowone{} inequality,
\cref{alg:sep-no1x} for the separation of the original \arrowone{} inequalities
needs some minor changes.
Firstly, the graph~$H$ from \cref{fig:arrow1-graph}
needs to be modified as shown in \cref{fig:switching}.

The two possible cycles through $ j' $ and $ j'' $
represent the decision whether to switch on~$ \set{j} $ for $ j \in J $ or not.
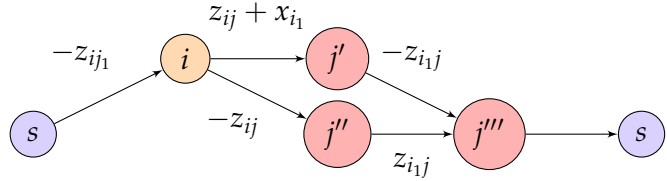
\begin{figure}[h]
	\centering
	\begin{tikzpicture}
		\node[shape=circle,draw=black,fill=lightmauve] (Y) at (-2,1) {$s$};
		\node[shape=circle,draw=black,fill=orange!30] (C) at (0,2) {$i$};
		\node[shape=circle,draw=black,fill=red!30] (E) at (2,1) {$j''$};
		\node[shape=circle,draw=black,fill=red!30] (F) at (2,2) {$j'$} ;
		\node[shape=circle,draw=black, fill=red!30] (Z) at (4,1) {$j'''$};
		\node[shape=circle,draw=black, fill=lightmauve] (R) at (6,1) {$s$};
	
		\path [line] (C) -- node [midway,above=0.6em ] {$z_{i\j}+x_{i_1}$} (F);
		\path [line] (Y) -- node [near start, above=1em ] {$-z_{i\j_1}$} (C);
		\path [line] (F) -- node [midway, above=0.5em ] {$-z_{i_1\j}$} (Z);
	
		\path [line] (C) -- node [near start,below=0.6em ] {$-z_{i\j}$} (E);
		\path [line] (E) -- node [midway,below=0.3em ] {$z_{i_1\j}$} (Z);
		\path [line] (Z) -- (R);
	\end{tikzpicture}
	\caption{The necessary changes to graph~$H$ to separate over all switchings
		of the \arrowone{} inequalities.
		Each node $ j \in J $ is split
		into three nodes $ j' $, $ j'' $ and $ j''' $.
		Then the arcs $ (i, j') $ and $ (i, j'') $ for $ i \in I_2 $
		as well as $ (j', j''') $, $ (j'', j''') $ and $ (j''', s) $ are introduced
		with costs as shown above.}
	\label{fig:switching}
\end{figure}
The number of nodes and edges in the modified graph
is $ \card{\tilde{V}} = \I^{\max} + 3 \card{\Y} + 1 $
and $ \card{A} = 4 \I^{\max} \card{\Y} + \I^{\max}+ \card{\Y} $ respectively,
so the solution time of the resulting MCCP
is moderately larger than before.

Secondly, the enumeration part in \cref{alg:sep-no1x} needs to be extended, too.
We loop twice over all $ j_1 \in \Y $,
once with arc costs directly as in \cref{fig:switching}
and once more an arc cost on $ (s, i) $ incorporating the switching on $ \set{j_1} $.
This basically doubles the running time of the overall algorithm. 

Furthermore the \arrowtwo{}+switching inequalities can be separated
via a slight modification of the algorithm
by changing the cost function for the MCCP
to reflect the support of the \arrowtwo{}+switching inequalities.

\section{Separation of \arrowone{}+copying inequalities}
\label{sec:ip-separierer}

Now we consider se\-pa\-ra\-ting over all copyings
of an \arrowone{} inequality,
which are given by
\begin{equation}
	\sum_{i \in S_1}\left( (m - 1) x_{i} - \sum_{p = 1}^m z_{i j_p} \right) + \y_{j_1} 
		+ \sum_{p = 2}^m \sum_{i \in S_p}\left(-z_{i j_1} + z_{i j_m} \right) \geq 0,
	\label{equ:new_facet1-copy}
\end{equation}
for distinct $ I_1, I_2 \in \Is $, a non-empty subset $ S_1 \subseteq I_1 $,
pairwise disjoint and non-empty subsets $ S_2, \ldots, S_m \subseteq I_2 $
and pairwise distinct $ j_1, \ldots, j_m \in Y $
with $ m \geq 3 $.
To this end, we replace \cref{alg:sep-no1x} by \cref{alg:sep-arrow-copy}.
The inner optimization subproblem
is now an integer MCCP on the same graph,
as described in the following.

For distinct $ \I_1, \I_2 \in \Is $ as well as $ \j_1 \in Y$,
let $ J \coloneqq Y \setminus \set{j_1} $,
and let $ H = (\tilde{V}, A) $ be a directed graph
with node set $ \tilde{V} \coloneqq \set{s} \cup \I_2 \cup J $.
The arc set~$A$ contains the arcs $ (s, i) $
for $ i \in \I_2 $, all $ (i, j) \in I_2 \times J $
and $ (j, s) $ for $ j \in J $.

Let  $ u_{ij} \in \N $ for $ (i, j) \in A $ be the flow variables.
Variables $ b_j \in \F $ for $ j \in J $
stand for the decision to use arc $ (j, s) \in A $.
Further, variables $ r_i \in \F $ decide if node~$ i \in I_1 $
is chosen to be in set~$ S_1 $.
Finally, variables $ p_{ij} \in \F $ for $ i \in I_1 $ and $ j \in J $
model the product between $ b_j $ and $ r_i $.
The constraints are then given by
\begin{subequations}
\begin{alignat}{1}
u_{ij} &\leq 1, \quad (i,j) \in \tilde{E} \setminus \set{(j,s) : j \in J} \label{arrow+copy:1}\\
\sum_{j \in J} b_j &\geq 1, \quad \label{arrow+copy:2}\\
\sum_{i \in I_1} r_i &\geq 1, \quad \label{arrow+copy:3}\\
u_{si} &= \sum_{j \in J} y_{ij}, \quad \forall i \in I_2, \label{arrow+copy:4}\\
u_{js} &= \sum_{i \in I_2} y_{ij}, \quad \forall j \in J, \label{arrow+copy:5}\\
u_{js} &\geq b_j, \quad \forall j \in J, \label{arrow+copy:6}\\
u_{js} &\leq \abs{I_2}b_j, \quad \forall j \in J, \label{arrow+copy:7}\\
p_{ij} &\leq r_i, \quad \forall i \in I_1, j \in J, \label{arrow+copy:8}\\
p_{ij} &\leq b_j, \quad \forall i \in I_1, j \in J, \label{arrow+copy:9}\\
r_i + b_j - p_{ij} &\leq 1, \quad \forall i \in I_1, j \in J. \label{arrow+copy:10}
\end{alignat}
\label{equ:feas_set}
\end{subequations}
Constraints \cref{arrow+copy:1} enforce the arc capacities,
\cref{arrow+copy:2} ensure that at least one $ j \in J $ is chosen,
\cref{arrow+copy:3} require to choose at least one $ i \in I_1 $,
\cref{arrow+copy:4,arrow+copy:5} are the flow conservation constraints,
\cref{arrow+copy:6} demand that there be flow on $ (j, s) $
if $ j \in J $ is chosen,
\cref{arrow+copy:7} ensure that there are at most $ \card{I_2} $ units of flow
if $ j \in J $ is chosen and that there is no flow otherwise,
\cref{arrow+copy:8,arrow+copy:9,arrow+copy:10} model $ p_{ij} = r_i b_j $.

The objective can be stated as
\begin{equation}
	\min \sum_{i \in I_2} z_{ij_1} u_{si}
		+ \sum_{i \in I_2, j \in J} z_{ij} u_{ij}
		+ \sum_{i \in I_1, j \in J} (-z_{ij} + x_i) p_{ij}
		+ \sum_{i \in I_1} (-z_{ij_1} r_i).
	\label{equ:obj}
\end{equation}

\begin{algorithm}[h]
	\begin{algorithmic}[1]
		\Require{$ (x, y, z) \in R^{X \cup Y \cup E} $}
		\Ensure{Most violated \arrowone{}+copy inequality
			for each $ \I_1, \I_2 \in \Is, \I_1 \neq \I_2, \j_1 \in Y $.}
		\Function{\Arrowone{}+Copy-Separator}{$ x, y, z $}
			\For{$ \I_1, \I_2 \in \Is, \I_1 \neq \I_2,\j_1 \in Y $}
			\State Solve integer problem defined by \cref{equ:feas_set} and \cref{equ:obj}
			\Let{$o$}{objective value of the integer problem in step 3}
			\Let{$ \bar{u}, \bar{b}, \bar{r}, \bar{p} $}
				{optimal solution of the integer MCCP in step 3}
			\If{$ o + y_{j_1} < 0 $}
				\State $ S_1 \leftarrow \set{i \in I_1 \mid \bar{r}_i = 1} $
 				\Let{$ \set{j_2, \ldots, j_m} $}{$ \set{j \in J \mid \bar{b}_j = 1} $}
 					\Comment{naming the elements}
				\For{$n \in \set{2, \ldots, m}$}
					\State $ S_n \leftarrow
						\set{i \in I_2 \mid (i, j_n) \in W, \bar{u}_{ij_n} = 1} $
				\EndFor
				\State \textbf{return} violated inequality
					based on $ (\I_1, \I_2, S_1, S_2, \ldots S_m, \j_1, \j_2, \ldots \j_m) $
			\EndIf
			\EndFor
		\EndFunction
	\end{algorithmic}
	\caption{Separation algorithm for \arrowone{}+copying inequalities}
	\label{alg:sep-arrow-copy}
\end{algorithm}

An \arrowone{}+copying inequality is uniquely defined
if distinct $ \I_1, \I_2 \in \Is $, a non-empty subset $ S_1 \subseteq \I_1 $,
a $ \j_1 \in Y $, pairwise distinct $ j_2, \ldots j_m \in J $
and non-empty, pairwise disjoint $ S_2, \ldots, S_m \subseteq \I_2 $ are chosen.
The possible choices for $ \I_1 $, $ \I_2 $ and $ \j_1 $ are again enumerated.
In step~3 we solve the integer MCCP. 
It determines adaptively the optimal value for~$m$.
The optimal~$ S_1 $ is then extracted from the $r$~variables in step~7,
and the optimal $ j_2, \ldots, j_m $ are derived
from the values of the $b$-variables in step~8. 
Finally, the optimal $ S_2, \ldots S_m $ are calculated in steps 9--11,
based on the optimal flow in the graph.

The \arrowtwo{}+copying inequalities can be separated
via a slight modification of the integer program
by changing the cost function for the minimum-cost-circulation problem
to reflect the support of the \arrowtwo{}+copying inequalities.

\section{Separation of general lifted inequalities from BQP}
\label{sep:gen-lifted}

For a subset uniform graph~$G$, any valid inequality for the $ BQP $
on the corresponding dependency graph~$ \GG $ can be $0$-lifted to $ \zP(G, \Is) $.
If $ \card{\supp_Y} = m $ for some $ m \geq 1 $,
such an inequality is of the form
\begin{equation*}
	\sum_{k = 1}^h a_{i_k} x_{i_k} + \sum_{p = 1}^m a_{j_p} y_{j_p}
		+ \sum_{k = 1}^h + \sum_{p = 1}^m (a_{i_k j_p} z_{i_k j_p}) \leq \delta,
\end{equation*}
with $ i_1 \in I_1, \ldots, i_h \in \I_h $
for pairwise distinct $ I_1, \ldots, I_h \in \Is $
and pairwise distinct $ j_1, \ldots, j_m \in Y $.
We now assume that for a specific class of constraints to separate over,
the coefficient vector~$a$ is already uniquely determined
by the above choice of $ i_1, \ldots, i_h $ and $ \j_1, \ldots, \j_m $.
The cycle or the $ I_{mm22} $ Bell inequalities would be examples here.
For such $ BQP $ inequalities with bounded $Y$-support,
we can give a separation template
which generalizes \cref{alg:sep-cycle}
for the cycle inequalities.
It is shown in \cref{alg:sep-lift-sep}.
\begin{algorithm}
	\begin{algorithmic}[1]
		\Require{$ (x, y, z) \in \R^{X \cup Y \cup E},
			m \in [\card{Y}], h \in [\card{\Is}]$}
		\Ensure{Most violated inequality for pairwise distinct $ \j_1, \ldots, \j_m \in Y $}
		\Function{Lift-Separator}{$ (x, y, z) $}
			\For{pairwise distinct $ \j_1, \ldots, \j_m \in Y $}
				\For{$ \I \in \Is, k \in \set{1, \ldots, h} $}
					\Let{$ c^I_k $}{$ \max\set{a_{i_k} x_r
						+ \sum_{p = 1}^m a_{i_k j_p} z_{r j_p} \mid
							r \in I} $}
				\EndFor
				\State Solve the $h$-cardinality assignment problem with objective $c$
				\Let{$o$}{objective value of the assignment problem in step 6}
				\Let{$v$}{$ \sum_{s = 1}^m a_{j_s} \y_{j_s} $}
				\If{$ o + v > \delta $}
					\Let{$ \set{(I_1, 1), \ldots (I_h, h)} $}{optimal solution in step 6}
					\For{$k \in \set{1, \ldots, h}$}
						\Let{$ i_k $}{$ \argmax\set{a_{i_k} x_r
							+ \sum_{p = 1}^m a_{i_k j_p} z_{r j_p} \mid r \in I_k} $}
					\EndFor
				\State \textbf{return} violated inequality
					based on $ (\j_1, \ldots, \j_m, I_1, \ldots, I_h, i_1, \ldots, i_h) $
				\EndIf
			\EndFor
		\EndFunction
	\end{algorithmic}
	\caption{Separation algorithm for a given class of $0$-lifted facets from $ BQP $}
	\label{alg:sep-lift-sep}
\end{algorithm}

A $0$-lifted $ BQP $ inequality of the considered class
is uniquely defined if we chose pairwise distinct $ \j_1, \ldots, \j_m \in Y $,
pairwise distinct $ I_1, \ldots, I_h \in \Is $
and one node from each $ I_1, \ldots, I_h $.
In general, there are exponentially many possible inequalities of this type.
For each fixed choice of nodes $ \j_1, \ldots, \j_m \in Y $ (step~2),
we define the values~$ c^I_k $ for $ \I \in \Is $ and $ k = 1, \ldots, h $
as the largest contribution that any element $ i \in \I $
would add to the left-hand side of the inequality to be separated (steps~3--5)
if~$i$ was assigned to the coefficients associated with~$k$.
Then an $h$-cardinality maximum assignment problem (see \cite{pentico2007assignment})
with objective~$c$ needs to be solved (step~6).
It determines which $ I \in \Is $ is assigned to which $ k \in \set{1, \ldots, h} $.
From the optimal assignment, the best possible choice
of the subsets $ I_1, \ldots I_h $ can be extracted (step~10),
from which we can derive the optimal $ i_1, \ldots, i_h $ (steps~11--13).

If the considered inequality class from $ BQP $
contains inequalities with $y$-supports of different sizes,
the algorithm needs to be run for every possible~$m$.
In order to separate also over all copies of these inequalities,
the maximum in step~4 needs to be taken
only over the sum of all elements that would make a positive contribution
to the left-hand side, similar to steps~7--9 in \cref{alg:sep-cycle}.

In fact, \cref{alg:sep-cycle} (extended to include the copyings)
is a special case of \cref{alg:sep-cycle},
where we have $ m = 2 $, and where solving the assignment subproblem
can be done via a greedy algorithm.
The running time of \cref{alg:sep-cycle}
is $ \OO((\card{Y})!/m! D(\card{\Is}, h)) $,
where $ D(\card{\Is}, h) $ is the running time
needed to solve the $h$-cardinality assignment subproblem.
It can be solved as a minimum-cost flow problem on a graph,
where the node set has size $ \card{V} = \card{\Is} + h + 2 $
and the edge set has size $ \card{E} = \card{V} \card{\Is} + \card{\Is} + \card{V} $.
The push-relabel algorithm (see \cite{cheriyan1989analysis}), for example
solving this minimum-cost flow problem within $ \OO(\card{V}^2 \sqrt{\card{E}}) $.
Note that \cref{alg:sep-lift-sep} is super-exponential.
However, for fixed~$m$ it runs in polynomial time.

\section{Transformation between lower- and full-dimensional space}
\label{sec:traf_low_dim}

Recall that we have defined the multiple-choice set 
\begin{equation*}
	\X^\Is \coloneqq \SSet{\x \in \F^{\X}}{\sum_{i \in \I} \x_i \leq 1
		\zsetforall \I \in \Is}
\end{equation*}
and the boolean quadric polytope with multiple-choice constraints
\begin{equation*}
	\zP(G, \Is) \coloneqq \conv\SSet{(\x, \y, \z) \in \F^{\X \cup \Y \cup E}}
		{\x_i \y_\j = \z_{i \j}\, \forall \set{i, \j} \in E, x \in \X^\Is}
\end{equation*}
as full-dimensional polyhedra.

In the following, we give a one-to-one transformation between valid inequalities for~$ \zP(G, \Is) $
and valid inequalities for a lower-dimensional variant of~$ \zP(G, \Is) $,
where all multiple-constraints have to be fulfilled with equality.
Indeed, this lower-dimensional variant corresponds to a face of~$ \zP(G, \Is) $.

We assume that~$G$ is a complete bipartite graph.
Then we define a new complete bipartite graph $ \tilde{G} = (\tilde{X} \cup Y, \tilde{E}) $
with one extra node in each subset in the partition $ \Is $ of $X$.
Let $ \tilde{\Is} $ be the corresponding partition of $ \tilde{X} $.
Further, we define the multiple-choice set
\begin{equation}
	\tilde{\X}^\Is \coloneqq
		\SSet{\x \in \FR^{\tilde{\X}}}{\sum_{i \in \I} \x_i = 1
			\zsetforall \I \in \tilde{\Is}}
\label{X:full-dim}
\end{equation}
and the polytope
\begin{equation}
	\tilde{\zP}(\tilde{G}, \tilde{\Is}) \coloneqq
		\conv\SSet{(\x, \y, \z)\in \F^{\tilde{\X} \cup \Y \cup \tilde{E}}}
			{\x_i \y_\j = \z_{i \j}\, \forall \set{i, \j} \in \tilde{E},\, x \in \tilde{\X}^\Is}.
\end{equation}

We will now state an affine transformation which rotates $ \tilde{\X}^\Is $
in such a way that one variable per subset of the partition $ \tilde{\Is} $
becomes constantly zero.
For each $ I \in \tilde{\Is} $ we choose an arbitrary nodes $ i^I_0 \in I $
and define the matrices $ B_I \in \R^{I \times I} $,
$ B \in \R^{\tilde{X} \times \tilde{X}} $
and vectors $ b_I \in \R^I $, $ b \in \R^{\tilde{X}} $ as follows:
\begin{equation*}
	B_I(i_1, i_2) = \begin{cases}
		1 & \text{for } i_1 = i_2\\
		1 & \text{for } i_1 = i^I_0\\
		0 & \text{else }
		\end{cases}, \quad
	B = \begin{bmatrix}
		B_{I_1} &  & 0\\
		& \ddots &\\
		0 &    & B_{I_m}
		\end{bmatrix},
\end{equation*}
\begin{equation*}
	b_I(i) = \begin{cases}
		-1 & \text{for } i = i^I_0\\
		0 & \text{else }
		\end{cases}, \quad
	b = \begin{bmatrix}
		b_{I_1}\\
		\vdots\\
		b_{I_m}
		\end{bmatrix}.
\end{equation*}
By applying the invertible affine transformation $ f\colon \R^{\tilde{X}} \to \R^{\tilde{X}},
	x \mapsto Bx - b $
to $ \tilde{\X}^\Is $, we arrive at
\begin{equation*}
	\bar{\X}^\Is \coloneqq f(\tilde{\X}^I) =
		\SSet{\x \in \F^{\tilde{\X}}}{\sum_{i \in \I \setminus \set{i^I_0}} \x_i \leq 1,\,
			\x_{i^I_0} = 0 \zsetforall \I \in \tilde{\Is}}.
	\label{equ:qR}
\end{equation*}
This allows us to define the polytope
\begin{equation*}
	\bar{\zP}(\tilde{G}, \tilde{\Is}) \coloneqq
		\conv\SSet{(\x, \y, \z) \in \F^{\tilde{\X} \cup \Y \cup \tilde{E}}}
			{\x_i \y_\j = \z_{i \j}\, \forall \set{i, \j} \in \tilde{E},\, x \in \bar{\X}^\Is},
\end{equation*}
which is the canonical embedding of $ \zP(G, \Is) $ into $ \R^{\tilde{X^I}} = \R^{\bar{X^I}} $
(in fact an extended formulation).
Using Lemma~2 in \cite{galliseparable},
we can infer the two relations
\begin{equation*}
	\bar{\zP}(\tilde{G}, \tilde{\Is}) =
		\SSet{(Bx - b, y , Bz - by^T) \in \FR^{\tilde{\X} \cup \Y \cup \tilde{E}}}
			{(x, y, z) \in \tilde{\zP}(\tilde{G}, \tilde{\Is})}
\end{equation*}
and
\begin{equation*}
	\tilde{\zP}(\tilde{G}, \tilde{\Is}) =
		\SSet{(B^{-1}(x + b), y' ,B^{-1}(z + by^T)) \in \FR^{\tilde{\X} \cup \Y \cup \tilde{E}}}
			{(x, y, z) \in \bar{\zP}(\tilde{G}, \tilde{\Is})}.
\end{equation*}
Based on this observation,
we can formulate the following lemma, which transform inequalities
back and forth between the two polytopes.
\begin{lemma}
	Let $ a^T (x, y, z) \leq b $ be a valid inequality
	for $ \bar{\zP}(\tilde{G}, \tilde{\Is}) $,
	then the inequality $ a^T (B^{-1}(x + b), y, B^{-1}(z + by^T)) \leq \bar{b} $
	is valid for $ \tilde{\zP}(\tilde{G}, \tilde{\Is}) $ and vice versa.
	\label{lmm:trafo}
\end{lemma}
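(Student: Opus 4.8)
The plan is to derive the statement directly from the two set identities displayed immediately before the lemma. Together they say that the affine maps
\[
  T\colon (x, y, z) \mapsto (Bx - b,\ y,\ Bz - b y^T)
  \qquad\text{and}\qquad
  T^{-1}\colon (x, y, z) \mapsto \big(B^{-1}(x + b),\ y,\ B^{-1}(z + b y^T)\big)
\]
are mutually inverse bijections between $\tilde{\zP}(\tilde{G}, \tilde{\Is})$ and $\bar{\zP}(\tilde{G}, \tilde{\Is})$. The substantive content here — that these maps send feasible points to feasible points, i.e.\ that they intertwine the bilinear relations $z_{i\j} = x_i y_\j$ (note that $b y^T$ is \emph{linear} in $y$, since $b$ is a fixed vector) — is exactly what Lemma~2 in \cite{galliseparable} supplies, and it has already been used to obtain those two identities; so no further polyhedral work on the feasible sets is needed.

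The remaining step is the elementary observation that an invertible affine bijection between two polytopes induces a bijection between their valid inequalities by substitution: if $a^T(x,y,z) \le b$ holds at every point of $\bar{\zP}(\tilde{G}, \tilde{\Is})$, then composing $a^T(\cdot)$ with the affine map of the second identity turns it into an inequality that holds at every point of $\tilde{\zP}(\tilde{G}, \tilde{\Is})$. Writing that composition out gives $a^T\big(B^{-1}(x+b),\ y,\ B^{-1}(z + b y^T)\big)$, which is again affine in $(x, y, z)$; moving the constant contributed by the affine shift of variables to the right-hand side yields the asserted inequality with a new bound $\bar b$ (differing from $b$ precisely by that constant term $a^T(B^{-1}b,\,0,\,0)$). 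The converse direction is the same argument run through $T$ instead of $T^{-1}$; since $T$ and $T^{-1}$ are mutually inverse, the correspondence of valid inequalities is in fact a bijection, and because invertible affine maps preserve the face lattice and dimension, it restricts to a bijection of facet-defining inequalities as well.

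I expect no genuine obstacle; the only care needed is bookkeeping — tracking the additive constant so that $\bar b$ is identified correctly, and matching which of $T$, $T^{-1}$ is substituted to the orientation of the statement. Everything nontrivial (preservation of the product constraints) lives upstream in \cite{galliseparable} and in the two displayed identities, so in the write-up I would simply cite those identities, perform the substitution, and collect the constant.
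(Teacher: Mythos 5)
Your proposal is correct and is exactly the argument the paper itself relies on: the paper states \cref{lmm:trafo} without an explicit proof, deriving it directly from the two displayed set identities (obtained via Lemma~2 of \cite{galliseparable}) together with the standard fact that an invertible affine bijection between polytopes transports valid inequalities by substitution. Your additional bookkeeping of the constant term and of which of $T$, $T^{-1}$ is substituted is the right level of care and fills in precisely what the paper leaves implicit.
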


One can easily see that $ \bar{\zP}(\tilde{G}, \tilde{\Is}) $
and $ \zP(G, \Is) $ are the \qm{same} polytope in different dimension.
As all variables which do not appear in $ \zP(G, \Is) $
are fixed to $0$ in $ \bar{\zP}(\tilde{G}, \tilde{\Is}) $,
we can conclude from \cref{prop:facet-ext}
that the extension of a facet of $ \zP(G, \Is) $
is also a facet of $ \bar{\zP}(\tilde{G}, \tilde{\Is}) $.
In addition, the following \emph{basic equations} hold
for $ \bar{\zP}(\tilde{G}, \tilde{\Is}) $
(\cf the basic inequalities in \cref{sec:basic_rlt}):
\begin{alignat}{1}
	x_{i^I_0} = 0,\, \quad \forall \I \in \tilde{\Is},
		\label{Eq:Basic_Equations1} \\
	z_{i^I_0 j} = 0,\, \quad \forall \I \in \tilde{\Is}, \forall j \in Y. \label{Eq:Basic_Equations2}	
\end{alignat}

Via the inverse transformation to~$f$
applied to \cref{Eq:Basic_Equations2},
we obtain the following further valid equations:
\begin{equation}
	\sum_{i \in I} z_{ij} = y_j, \quad \forall j \in Y, I \in \hat{\Is}.
	\label{rlt:equations}
\end{equation}
As these equations can also be derived from~\cref{Eq:Basic_Equations2}
via the RLT procedure, we call them the \emph{RLT equations}.

Altogether, in order to obtain tighter linear relaxations
for $ \tilde{\zP}(\tilde{G}, \tilde{\Is}) $,
we can add the RLT equations to the initial formulation
and can derive further valid inequalities as follows.
We remove one arbitrary node from each subset $ \tilde{\Is} $,
which yields a graph $ \hat{G} = (\hat{X}, \hat{E}) $
and the corresponding partition $ \hat{\Is} $ on $ \hat{X} $.
Any valid inequality for $ P(\hat{G}, \hat{\Is}) $
can now directly be added to the relaxation as well
(as a $0$-lifted inequality).

In the pooling problem with recipes, which we have presented in \cref{comp:pooling},
we need to consider multiple instances of $ \tilde{\zP}(\tilde{G}, \tilde{\Is}) $
occurring as a substructure of the overall problem.
In order to make use of the above technique to improve the relaxation,
we first rescale all multiple-choice constraints~\cref{equ:qrecipe1}
such that they have a right-hand side of~$1$ instead of~$ \sigma_h $,
and then rescale all variables to have an upper bound of~$1$.
This way, the polytopes \cref{Eq:qyv-P}
are indeed of the form $ \tilde{\zP}(\tilde{G}, \tilde{\Is}) $,
which allows us to separate the cutting planes derived in \cref{sec:fac-def-in}
using the techniques presented in \cref{sec:sep}.

\section{Number of cutting planes produced in \cref{sec:res-random}}
\label{sec:num-cutting-planes}

In \cref{sec:res-random}, we conducted experiments on random instances
to find out which classes of facet-defining inequalities we derived
are able to close the integrality gap how far.
The results were given in \cref{tb:random_rel_reduction}.
Here we report how many cutting planes of each type were needed
for each given instance type to produce these results.
In \cref{tb:random_added_cuts},
we see the number of cutting planes found
for the corresponding cells of \cref{tb:random_rel_reduction},
again averaged over all~10 instances of each type.
\begin{table}[h]
	\centering
	\caption{Number of cutting planes separated
		for instances of different size and for various facet classes
		until no more violated inequalities were found}
	\label{tb:random_added_cuts}
	\begin{tabular}{ l|rrr|rrr|r }
		\toprule
		& 5-5-10 & 10-10-10 & 15-15-10 & 5-5-20 & 5-5-40 & 5-5-60 & 10-*-25\\
		\midrule
		RLT & \numprint{100} & \numprint{200} & \numprint{300} & \numprint{200} & \numprint{400} & \numprint{600} & \numprint{500}\\
		C & \numprint{460} & \numprint{1014} & \numprint{1180} & \numprint{1937} & \numprint{4963} & \numprint{13473} & \numprint{3796}\\
		CC & \numprint{361} & \numprint{1026} & \numprint{1709} & \numprint{2496} & \numprint{12242} & \numprint{26285} & \numprint{9749}\\
		A1 & \numprint{979} & \numprint{8390} & \numprint{30839} & \numprint{3048} & \numprint{5448} & \numprint{7662} & \numprint{16460}\\
		A1S & \numprint{1538} & \numprint{28946} & \numprint{141927} & \numprint{8682} & \numprint{23971} & \numprint{36671} & \numprint{57074}\\
		A1C & \numprint{418} & \numprint{3370} & \numprint{8927} & \numprint{1716} & \numprint{4036} & \numprint{6546} & \numprint{8448}\\
		A2 & \numprint{355} & \numprint{1076} & \numprint{1947} & \numprint{1134} & \numprint{3979} & \numprint{8428} & \numprint{5451}\\
		A2S & \numprint{1454} & \numprint{5886} & \numprint{12591} & \numprint{4704} & \numprint{14714} & \numprint{31140} & \numprint{22551}\\
		A2C & \numprint{646} & \numprint{4670} & \numprint{12008} & \numprint{2490} & \numprint{6682} & \numprint{12071} & \numprint{10674}\\
		All & \numprint{1954} & \numprint{20464} & \numprint{69273} & \numprint{8635} & \numprint{31591} & \numprint{62479} & \numprint{57206}\\
		\bottomrule
	\end{tabular}
\end{table}

The RLT inequalities, which had all been added from the start,
are a comparably small class of facet-defining inequalities.
Nevertheless, we saw from our experiments on both random and real-world instances,
that they are very helpful in moving the dual bound.

The number of CC inequalities we found is about 2.7~times
the number of C inequalities,
which is an indication that the copying operation
is able to increase the class of ordinary cycle facets tremendously.
It is also interesting to note
that much fewer A1C inequalities than original A1 inequalities are found,
but nevertheless the former provide a significantly better dual bound.
This and the results for the C/CC inequalities
point to a high potential of the coyping operation
as a lifting method for a given basic class of valid inequalities.
The All row finally shows that a relatively high number of violated inequalities
is separated when considering all classes jointly
and iterating until no further violated inequalities are found -- up to \numprint{70 000}
for the largest instances.

\section{Second strongest facet class after CC}
\label{sec:further-study}

From the results in \cref{tb:random_rel_reduction,tb:random_added_cuts},
we concluded that the CC inequalities are by far the most efficient ones to separate:
they provide a very strong improvement in the dual bound per putting plane
added to the relaxation, and at the same time,
their separation is possible at very low computational cost.
This motivated us to examine which of the facet classes we found
is the second-strongest after the CC facets.
In \cref{tb:random_second_step}, we show the results obtained
for the largest instances in our test set
when first adding all violated CC inequalities
and then iteratively separating the CC inequalities
and the respective second class of inequalities jointly
until no further violated inequalities are found.
\begin{table}[H]
	\centering
	\caption{Average optimality gaps in \%
		when separating the CC inequalities together with a second inequality class		
		for the three classes of instances from \cref{tb:random_rel_reduction}
		with a non-zero gap after separating the CC inequalities alone (left).
		The corresponding number of cutting planes found (right).}
	\label{tb:random_second_step}
	\begin{tabular}{ l|rrr||rrr }
		\toprule
		& 5-5-40 & 5-5-60 & 10-*-25 & 5-5-40 & 5-5-60 & 10-*-25 \\
		\midrule
		CC & 1.99 & 4.43 & 4.05 & \numprint{12242} & \numprint{26285} & \numprint{9749} \\
		\midrule
		RLT & 1.41 & 3.49 & 3.19 & \numprint{400}(+\numprint{335}) & \numprint{600}(+\numprint{827}) & \numprint{500}(+\numprint{645}) \\
		A1S & 0.88 & 2.19 & 2.54 & \numprint{2534}(+\numprint{451}) & \numprint{8920}(+\numprint{1307}) & \numprint{5367}(+\numprint{894}) \\
		A1C & 1.28 & 2.74 & 3.48 & \numprint{957}(+\numprint{344}) & \numprint{3426}(+\numprint{1069}) & \numprint{1150}(+\numprint{479}) \\
		A2S & 1.97 & 4.38 & 3.61 & \numprint{48}(+\numprint{34}) & \numprint{125}(+\numprint{82}) & \numprint{703}(+\numprint{383}) \\
		A2C & 0.70 & 1.83 & 2.17 & \numprint{1799}(+\numprint{660}) & \numprint{5657}(+\numprint{1928}) & \numprint{4061}(+\numprint{1314}) \\
		\midrule
		All & 0.45 & 1.48 & 1.64 &&&\\
		\bottomrule
	\end{tabular}
\end{table}
The left-hand side of the table shows the remaining optimality gap
while the right-hand side shows the number of cutting planes found.
In the CC row and the All row, we repeat the results
from \cref{tb:random_rel_reduction} for comparison.
The five rows in the middle indicate the results for choosing
RLT, A1S, A1C, A2S and A2C, respectively,
as the inequality class to separate jointly with CC.
Furthermore, we see how many additional cutting planes
of the respective second class were needed,
with the number of additional CC inequalities in parentheses.
We infer from the results that all classes make a certain further contribution
to closing the gap compared to separating the CC inequalities alone.
The A2C inequalities seem to be the most promising ones;
however, they require the repeated solution of an integer program to separate.
An almost equally good result is obtained for the A1S inequalities,
where the subproblem is only a continuous flow problem.
Finally, we see that the remaining gaps in the All row are significantly lower,
which means that using more than two facet classes still yields further progress.

\end{document}